\newcommand{\answerI}[1]{ #1}
\NewDocumentCommand { \xifnum } { }
    {
        \fp_compare:nTF
    }
\newdimen\argwidth
\newcommand{\Jump}[1]{%
\setbox0=\hbox{$#1$}\argwidth=\wd0
\setbox0=\hbox{$\left[\box0\right]$}\advance\argwidth by -\wd0
\left[\kern.3\argwidth\box0\kern.3\argwidth\right]}
\newcommand{\bkperp}{\ensuremath{\mathbf{k^{\perp}}}}
\newcommand{\bb}{\ensuremath{\mathbf{b}}}
\newcommand{\bB}{\ensuremath{\mathbf{B}}}
\newcommand{\bn}{\ensuremath{\mathbf{n}}}
\newcommand{\bu}{\ensuremath{\mathbf{u}}}
\newcommand{\be}{\ensuremath{\mathbf{e}}}
\newcommand{\bv}{\ensuremath{\mathbf{v}}}
\newcommand{\bk}{\ensuremath{\mathbf{k}}}
\newcommand{\bC}{\ensuremath{\mathbf{C}}}
\newcommand{\cP}{\ensuremath{\mathcal{P}}}
\newcommand{\cF}{\ensuremath{\mathcal{F}}}
\newcommand{\cC}{\ensuremath{\mathcal{C}}}
\newcommand{\degx}{\ensuremath{\dd^{\circ} _x}}
\newcommand{\degy}{\ensuremath{\dd^{\circ} _y}}
\newcommand{\R}{\ensuremath{\mathbb{R}}}
\newcommand{\T}{\ensuremath{\mathbb{T}}}
\newcommand{\PP}{\ensuremath{\mathbb{P}}}
\newcommand{\QQ}{\ensuremath{\mathbb{Q}}}
\newcommand{\KK}{\ensuremath{\mathbb{K}}}
\newcommand{\bKK}{\ensuremath{\pmb{\mathbb{K}}}}
\newcommand{\dPP}{\ensuremath{\mathrm{d}\mathbb{P}}}
\newcommand{\bdPP}{\ensuremath{\pmb{\mathrm{d}\mathbb{P}}}}
\newcommand{\dQQ}{\ensuremath{\mathrm{d}\mathbb{Q}}}
\newcommand{\bdQQ}{\ensuremath{\pmb{\mathrm{d}\mathbb{Q}}}}
\newcommand{\bdQQkCurl}{\ensuremath{\widehat{\bdQQ}_k ^{\mathrm{curl}} }}
\newcommand{\bdQQkDiv}{\ensuremath{\widehat{\bdQQ}_k ^{\mathrm{div}} }}
\newcommand{\dQQkHat}{\ensuremath{\widehat{\dd \QQ}_{k-1} }}
\newcommand{\RT}{\ensuremath{\pmb{\mathbb{RT}}}}
\newcommand{\dRT}{\ensuremath{\pmb{\mathrm{d}\mathbb{RT}}}}
\newcommand{\bN}{\ensuremath{\pmb{\mathbb{N}}}}
\newcommand{\dN}{\ensuremath{\pmb{\mathrm{d}\mathbb{N}}}}
\DeclareMathOperator{\curlS}{\nablaperp \cdot}
\DeclareMathOperator{\Vect}{Vec}
\DeclareMathOperator{\rank}{rank}
\DeclareMathOperator{\Range}{Range}
\DeclareMathOperator{\Tr}{Tr}
\DeclareMathOperator{\bTr}{\mathbf{Tr}}
\newcommand{\divx}{\ensuremath{\nabla \cdot}}
\newcommand{\ddivx}{\ensuremath{{\nabla_{\mathscr{D}'} \cdot}}}
\newcommand{\nablaperp}{\ensuremath{{\nabla^{\perp}}}}
\newcommand{\dnablaperp}{\ensuremath{\nabla^{\perp} _{\mathscr{D}'}}}
\newcommand{\dcurlS}{\ensuremath{\nabla^{\perp} _{\mathscr{D}'} \cdot}}
\newcommand{\gradx}{\ensuremath{\nabla}}
\newcounter{propc}
\newtheorem{prop}[propc]{Proposition}
\newcommand{\Sum}[2]{\ensuremath{\textstyle{\sum\limits_{#1}^{#2}}}}
\newcommand{\Int}[2]{\ensuremath{\mathchoice%
{{\displaystyle\int_{#1}^{#2}}}
{{\displaystyle\int_{#1}^{#2}}}
{\int_{#1}^{#2}}
{\int_{#1}^{#2}}
}}
\newcommand{\dd}{\ensuremath{{\rm d}}}
\newcommand{\scalar}[2]{\ensuremath{\langle #1 | #2 \rangle}}
\newcommand{\diag}[1]{\stackrel{\displaystyle{#1}}{\xrightarrow{\qquad\qquad}}}
\tikzset{>={Stealth[width=1.5mm, length=1mm]}}
\def\LagrangeQuad#1#2#3#4{
  \node[rectangle,draw, minimum size=3.6cm,thick] (r) at (#1,#2) {};
  \foreach \x in {0,1,...,#4}{
    \foreach \y in {0,1,...,#4}{
      \node[color=blue] at (#1-0.5*#3 +#3/#4*\x,#2-0.5*#3 +#3/#4*\y) {\LARGE $\bullet$};
    }
  }
}
\def\dFaceCellsQuad#1#2#3#4{
  \node[rectangle,draw, minimum size=3.6cm,thick] (r) at (#1,#2) {};
  \foreach \x in {0,1,...,#4}{
    \node[color=red] at (#1-0.5*#3 +#3/#4*\x,#2-0.5*4) {\LARGE $\bullet$};
    \node[color=red] at (#1-0.5*#3 +#3/#4*\x,#2+0.5*4) {\LARGE $\bullet$};
    \node[color=red] at (#1-0.5*4,#2-0.5*#3 +#3/#4*\x) {\LARGE $\bullet$};
    \node[color=red] at (#1+0.5*4,#2-0.5*#3 +#3/#4*\x) {\LARGE $\bullet$};
    \foreach \y in {0,1,...,#4}{
      \node[color=blue] at (#1-0.5*#3 +#3/#4*\x,#2-0.5*#3 +#3/#4*\y) {\LARGE $\bullet$};
    }
  }
}
\def\dRaviartThomasQuad#1#2#3#4{
  \node[rectangle,draw, minimum size=3.6cm,thick] (r) at (#1,#2) {};
  \pgfmathsetmacro\npu{1+#4};
  \foreach \x in {0,1,...,#4}{
    \foreach \y in {0,1,...,\npu}{
      \draw[color=blue,<->,very thick] (-0.2+#1-0.5*#3 +#3/#4*\x,#2-0.5*#3 +#3/\npu*\y) -- (0.2+#1-0.5*#3 +#3/#4*\x,#2-0.5*#3 +#3/\npu*\y);
    }
  }

  \foreach \x in {0,1,...,\npu}{
    \foreach \y in {0,1,...,#4}{
      \draw[color=blue,<->,very thick] (#1-0.5*#3 +#3/\npu*\x,-0.2+#2-0.5*#3 +#3/#4*\y) -- (#1-0.5*#3 +#3/\npu*\x,0.2+#2-0.5*#3 +#3/#4*\y);
    }
  }
}
\def\dNedelecQuad#1#2#3#4{
  \node[rectangle,draw, minimum size=3.6cm,thick] (r) at (#1,#2) {};
  \pgfmathsetmacro\npu{1+#4};
  \foreach \x in {0,1,...,#4}{
    \foreach \y in {0,1,...,\npu}{
      \draw[color=blue,<->,very thick] (#1-0.5*#3 +#3/#4*\x,-0.2+#2-0.5*#3 +#3/\npu*\y) -- (#1-0.5*#3 +#3/#4*\x,0.2+#2-0.5*#3 +#3/\npu*\y);
    }
  }

  \foreach \x in {0,1,...,\npu}{
    \foreach \y in {0,1,...,#4}{
      \draw[color=blue,<->,very thick] (-0.2+#1-0.5*#3 +#3/\npu*\x,#2-0.5*#3 +#3/#4*\y) -- (0.2+#1-0.5*#3 +#3/\npu*\x,#2-0.5*#3 +#3/#4*\y);
    }
  }
}
\author{Vincent Perrier \\
Team Cagire, INRIA Bordeaux Sud-Ouest. \\
Laboratoire de Math\'ematiques et de leurs applications \\
B\^atiment IPRA, 
Universit\'e de Pau et des Pays de l'Adour, \\ 
Avenue de l'Universit\'e, 
64\,013 Pau Cedex}
\title{discrete de-Rham complex involving a discontinuous
  finite element space for velocities: the case of periodic straight
  triangular and Cartesian meshes}
\begin{document}

\maketitle

\abstract{
  The aim of this article is to derive discontinuous finite elements
  vector spaces which can be put in a discrete de-Rham complex for which
  \answerI{the matching between the continuous and discrete cohomology spaces
  can be proven for periodic meshes}.

  
  Then the triangular case is addressed, for which we prove that
  \answerI{the same property} holds for the classical discontinuous finite
  element space for vectors.

  On Cartesian meshes,
  this result does not hold for the classical discontinuous
  finite element space for vectors. We then show how to use the de-Rham
  complex found for triangular meshes for enriching the finite element space on
  Cartesian meshes in order to recover a de-Rham complex, on which
  the same \answerI{property} is proven.}

\tableofcontents

\section{Introduction}

In this article, we are interested in the de-Rham complex on a two dimensional
space $\Omega$:
\begin{equation}
  \label{eq:deRhamDifferentialForms}
  \Lambda^0 ( \Omega )
  \diag{\dd}
  \Lambda^1 (  \Omega )
  \diag{\dd}
  \Lambda^2 (  \Omega ), 
\end{equation}
where $\Lambda^k ( \Omega)$ is the set of $k$-differential forms,
and $\dd$ is the exterior derivative. For the sake of simplicity,
we suppose that $\Omega$ is the two dimensional torus $\T^2$.

In the context of partial differential equations, it is usually
convenient to translate the multilinear forms of
\eqref{eq:deRhamDifferentialForms} in terms of \emph{proxies}. This is achieved
by choosing a basis of $\Lambda^{1} (\Omega)$. If $(\be_1, \be_2)$ is an
orthogonal basis, its dual basis is denoted by $(\dd x^1, \dd x^2)$, and
this leads usually to the two following situations
\begin{itemize}
\item When the basis $(\dd x^1, \dd x ^2)$ is used for $\Lambda^1$, and
  $\dd x^1 \wedge \dd x^2$ is used for $\Lambda^2$,
  the exterior derivative between $\Lambda^0$ and $\Lambda^1$
  gives a gradient operator on the proxies. 
  In this case, the $0$--forms of
  \eqref{eq:deRhamDifferentialForms} maps to the set $A$ of the scalar
  potentials, the $1$--forms to a set of vectors
  $\bB$, and the $2$--forms to a set of scalars \answerI{$C$}.
  The exterior derivative between $A$ and $\bB$ is the gradient, $\nabla$:
  $$
  \begin{array}{rcr@{\quad \longmapsto \quad }l}
    \nabla & \, : \, & A
    & \bB \\
    & & a & \bb = \left(\partial_x a , \partial_y a  \right)^T
  \end{array}
  $$
  whereas the exterior derivative between $\bB$ and $C$ is the
  rotated divergence (or scalar curl, which is obtained by taking the
  $z$ component of the classical three dimensional curl)
  $$
  \begin{array}{rcr@{\quad \longmapsto \quad }l}
    \curlS & \, : \, & \bB
    & C \\
    & & \bb & c = - \partial_y \bb_x + \partial_x \bb_y , 
  \end{array}
  $$
  and the diagram \eqref{eq:deRhamDifferentialForms} can be rewritten in term
  of proxies as
  \begin{equation}
    \label{eq:DiagramGrad}
    A \quad \diag{\nabla} \quad \bB \quad \diag{\curlS } \quad C.
  \end{equation}

\item When the basis $(- \dd x^2, \dd x ^1)$ is used for $\Lambda^1$, and
  $\dd x^1 \wedge \dd x^2$ is used for $\Lambda^2$,
  the exterior derivative between $\Lambda^0$ and $\Lambda^1$
  gives the rotated gradient (which can also be seen as a curl) on the proxies.
  In this case, the $0$--forms of
  \eqref{eq:deRhamDifferentialForms} maps to the set $A$ of scalars 
  (which may be seen as potential vectors by taking the curl of a vector which
  would have only a $z$ component), the $1$--forms to a set of vectors
  $\bB$, and the $2$--forms to a set of scalars $C$. The exterior derivative
  between $A$ and $\bB$ is the rotated gradient, $\nablaperp$:
  $$
  \begin{array}{rcr@{\quad \longmapsto \quad }l}
    \nablaperp & \, : \, & A
    & \bB \\
    & & a & \bb = \left(- \partial_y a , \partial_x a  \right)^T
  \end{array}
  $$
  whereas the exterior derivative between $\bB$ and $C$ is the opposite of the 
  two-dimensional divergence
  $$
  \begin{array}{rcr@{\quad \longmapsto \quad }l}
    \nabla \cdot & \, : \, & \bB
    & C \\
    & & \bb & c = \partial_x \bb_x + \partial_y \bb_y, 
  \end{array}
  $$
  and the diagram \eqref{eq:deRhamDifferentialForms} can be rewritten as
  \begin{equation}
    \label{eq:DiagramCurlMinus}
    A \quad \diag{\nablaperp} \quad \bB \quad \diag{-\nabla \cdot} \quad C,
  \end{equation}
  For the sake of simplicity, we will address rather the diagram
  \begin{equation}
    \label{eq:DiagramCurl}
    A \quad \diag{\nablaperp} \quad \bB \quad \diag{\nabla \cdot} \quad C,
  \end{equation}
\end{itemize}

A natural question that arises when considering diagrams such as
\eqref{eq:DiagramGrad} and \eqref{eq:DiagramCurl} is whether the sequence is
exact, which can be summarised for \eqref{eq:DiagramGrad} as :
\begin{itemize}
\item Is the kernel of $\nabla$ reduced to $0$?
\item Is $\left( \curlS \right)$ full rank?
\item Do we have $\Range ( \nabla ) = \ker \left( \curlS \right)$?
\end{itemize}
In general, the answer of the previous questions is "no", however, it is nearly
"yes" in the sense that the dimension of the vectorial spaces
$\ker \nabla$, $C/\Range( \curlS )$
and $\ker \curlS / \Range ( \nabla )$ (called the
\emph{cohomology spaces}) are finite. Following the
Hodge theory, these dimensions equal
to the zeroth, first and second Betti numbers (denoted by $b_0$, $b_1$ and
$b_2$), which are characteristics
of the topology of the domain $\Omega$. We are interested in this article in
two-dimensional periodic domain, namely a torus, for which we have
$b_0 = b_2 = 1$ and $b_1 = 2$. Also, $\ker \nabla$ and $C/\Range( \curlS )$
match with the uniform functions (which is a one dimensional vector space, and
so is consistent with $b_0 = b_2 =1$),
whereas $\ker \left( \curlS \right) / \Range ( \nabla )$
matches with the uniform vectors
(which is a two dimensional vector space, so is consistent with $b_1 = 2$). 

We are now interested in the discrete counterpart of these properties: once
$\Omega$ is discretised by a mesh, do we have a discrete counterpart of
\eqref{eq:DiagramGrad} and \eqref{eq:DiagramCurl}?
This indeed exists in the conforming finite element context
\cite{arnold2014periodic}. For example, for triangular meshes, the
following discrete version of \eqref{eq:DiagramCurl} involving the
space of continuous finite elements $\PP_k$, the space of Raviart-Thomas
finite elements $\RT_k$ and the space of discontinuous finite elements
$\dd \PP_{k-1}$
\begin{equation}
  \label{eq:DiagramCurlTriangle}
  \PP_{k+1} \quad \diag{\nablaperp} \quad \RT_{k+1} \quad \diag{\nabla \cdot}
  \quad \dd \PP_{k},
\end{equation}
whereas a discrete version of \eqref{eq:DiagramGrad} can also be derived
\begin{equation}
  \label{eq:DiagramGradTriangle}
  \PP_{k+1} \quad \diag{\nabla} \quad \bN_{k+1} \quad \diag{\curlS}
  \quad \dd \PP_{k},
\end{equation}
where $\bN_k$ is the space of two dimensional triangular Nédélec first species
finite elements. Several properties of the discrete diagrams
\eqref{eq:DiagramCurlTriangle} and \eqref{eq:DiagramGradTriangle} are
important (see \cite[Chap. 5.2.2]{arnold2018finite}): 
\begin{itemize}
\item {\bf the approximation property:} this property is ensured if
  the discrete spaces are correctly approximating the continuous spaces. 
\item {\bf the subcomplex property:} this property is a compatibility
  property between the discrete complex
  \eqref{eq:DiagramCurlTriangle} and \eqref{eq:DiagramGradTriangle}, which
  should be a subcomplex respectively of the continuous complexes 
  \eqref{eq:DiagramCurl} and \eqref{eq:DiagramGrad}, \answerI{this means for
    example for the diagram \eqref{eq:DiagramCurlTriangle}, 
    $\nabla^{\perp} \left( \PP_{k+1}\right) \subset \RT_{k+1}$ and
  $\nabla \cdot \left( \RT_{k+1} \right) \subset \dd \PP_k$ are ensured.}
\item {\bf the bounded cochain projection property:} this property
  \answerI{ means the existence of projection operators, e.g. between the
    continuous and discrete spaces of \eqref{eq:DiagramCurlTriangle}, that
    commutes with the exterior derivative, and that is bounded. This property
    is usually not easy to address, as the canonical interpolant are
    not bounded, see \cite[Section 5.4]{arnold2006finite}
    for an example of construction of a bounded cochain projection in the
    conformal case.}
\end{itemize}
\answerI{The second and third properties,
  combined with an additional approximation property, induce another
  property, the \emph{isomorphism of cohomology}
  \cite[Theorem 5.1]{arnold2018finite}. Another property, the
  \emph{gap between harmonic forms}
  \cite[Theorem 5.2]{arnold2018finite} controls the gap between the continuous
  and discrete harmonic forms. All these properties depend on the
  definition of the bounded cochain projection, which is not yet defined
  for the spaces we wish to address, and this is why we focus on a
  simplified case, the periodic case, for which the cohomology spaces
  are explicit, and match respectively for their proxies with
  constant scalar, constant vectors, and constant scalars. In this article,
  we would like to address the following proposition
  \begin{prop}
    \label{def:harmonicGap}
    The discrete cohomology space matches exactly
    with the continuous one.
  \end{prop}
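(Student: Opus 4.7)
The plan is to combine (i) explicit exhibition of cohomology representatives and (ii) a dimension count via the Euler characteristic of the discrete complex, both of which match the corresponding quantities of the continuous complex on the torus.

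First I would verify the subcomplex property, so that the discrete complex is well defined; this is classical on triangles and is ensured by construction in the enriched Cartesian case. I would then exhibit explicit nontrivial cohomology classes: the constant scalar gives $h^0 \geq 1$ and $h^2 \geq 1$, and the two canonical constant vectors give $h^1 \geq 2$. The nontriviality of each of these classes follows from the integral functional $\int_{\T^2}$, which vanishes on $\Range \nablaperp$ and on $\Range(\nabla \cdot)$ by periodicity of the torus but is nonzero on constants.

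The identity $h^0 = 1$ is direct: any $a$ in the first discrete space with $\nablaperp a = 0$ satisfies $\nabla a = 0$ pointwise, is therefore locally constant, and by continuity of the scalar space together with connectedness of the torus is globally constant. For the Euler characteristic of the discrete complex, I would decompose each discrete dimension into its vertex, edge, and face degree-of-freedom contributions; collecting terms, the alternating sum reduces to $N_V - N_E + N_F$, which vanishes by Euler's formula for the triangulated (or quadrangulated) torus. Thus $h^0 - h^1 + h^2 = 0$, which forces $h^1 = h^2 + 1$.

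The main obstacle is to pin down $h^2 = 1$ (equivalently $h^1 = 2$). I would approach this by proving surjectivity of $\nabla \cdot$ onto the zero-mean subspace of $\dd\PP_k$. Given $q \in \dd\PP_k$ with $\int q = 0$, local exactness of the complex on each element (a simply connected patch, on which dimension counting yields local Betti numbers $(1,0,0)$) provides a triangle-wise preimage $\bb_T \in \RT_{k+1}(T)$; a global preimage in $\RT_{k+1}$ is then obtained by correcting each $\bb_T$ by a divergence-free element of the form $\nablaperp a_T$, reducing the patching of normal traces across interior edges to a linear system in the unknowns $\{a_T\}$. The solvability of this system is a cohomological condition on the nerve of the mesh, which on the torus carries a single scalar obstruction, precisely the constraint $\int q = 0$. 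Once $h^2 = 1$ is in hand, the Euler identity yields $h^1 = 2$ and completes the proof.
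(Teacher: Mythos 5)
Your overall architecture is legitimate and genuinely different from the paper's: you propose to get $h^0=1$ directly, to bound $h^1\geq 2$ and $h^2\geq 1$ by exhibiting the constants as nontrivial classes, to use the vanishing of the Euler characteristic of the complex to get $h^0-h^1+h^2=0$, and then to close the argument with a single surjectivity statement. The bookkeeping you rely on does check out: the alternating sums of the dimensions in \autoref{prop:dimFETri} and \autoref{prop:dimFEQuad} vanish, $\ker\nablaperp=\KK$ is proved exactly as you describe (\autoref{prop:nablaPerpTri}), and the orthogonality of the constants to the two ranges is established by integration by parts (\autoref{prop:kerRankTri}, \autoref{prop:RankDivxTri}). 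The paper does not phrase anything in terms of Euler characteristics; it instead computes $\dim\ker\left(\ddivx\right)$ head-on and lets rank--nullity do the rest, but the two reductions are equivalent.

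The gap is in the one step you defer everything to, namely $h^2=1$, and it is precisely where all of the content of the paper lives. First, your sketch is written for the conforming complex: you seek a local preimage in $\RT_{k+1}(T)$ and correct by $\nablaperp a_T$, whereas the complexes at stake have middle space $\bdPP_k(\cC)$ (or the enriched $\bdQQkCurl(\cC)$ on quads) and target $\dPP_{k-1}(\cC)\times\dPP_k(\cF)$; the datum has a face component $q_f$ that must be matched by the jump $\Jump{\bb\cdot\bn_f}$, and ``zero mean'' must be taken with respect to the combined scalar product \eqref{eq:scalarProduct}. Second, and more seriously, the assertion that the patching system in the unknowns $\left\{a_T\right\}$ ``carries a single scalar obstruction'' is exactly the statement to be proven, not a consequence of general facts about the nerve: the equations prescribe the tangential derivatives $\partial_\tau\left(a_R-a_L\right)$ on every face, and their solvability involves compatibility conditions around every vertex as well as around the two homology cycles of the torus, so the cokernel is not a priori one-dimensional. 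The paper resolves this by the reference-cell decomposition of divergence-free fields (\autoref{prop:decompositionTri}, \autoref{prop:decompositionQuad}), which reduces the global count to a lowest-order statement --- the identification of the piecewise-constant divergence-free fields with $\nablaperp\PP_1\oplus\bKK$ on triangles, and \autoref{quad_lowOrder} on quads --- and that lowest-order statement is where the torus topology actually enters. Note also that for Cartesian meshes the unenriched space $\bdQQ_0$ fails (the rank of $\ddivx$ drops to $2N-N_x-N_y$, see the beginning of \autoref{sec:CartesianMesh}), so any correct argument must visibly use the enrichment defining $\bdQQkCurl$; a patching argument that never sees where the structure of the vector space enters cannot be complete. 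Until you supply the cokernel computation for the patching system, for each of the two mesh families, the proof is not done.
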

  which is expected to hold for periodic meshes. 
}

The finite element exterior calculus has been thoroughly addressed over
the last thirty years, first in the electromagnetism context
\cite{bossavit1988whitney,bossavit1998computational,hiptmair2001discrete,hiptmair2002finite}, and then extended to the
slightly more abstract Hodge Laplacian problem
\cite{arnold2006finite,arnold2010finite}, and led to a quite complete
theory for conforming finite elements on
classical cells (quads, triangles, hexa and
tetrahedra) \cite{arnold2018finite}.
This type of approximation was extended to polytopal
meshes see e.g.
\cite{bonelle2014compatible,bonelle2014analysis,bonelle2015analysis,
  milani2022artificial}
for the "Compatible Discrete Operators" framework
or \cite{di2020hybrid} for the "Hybrid High Order" method for citing few of
these methods. 
For the classical discontinuous Galerkin methods, as far as we know, few work
was considered see however e.g. \cite{hong2022extended} for
recent advances on this topic for the Hodge Laplacian. 

In this article, we are interested in finding a discrete counterpart of
\eqref{eq:DiagramGrad} and \eqref{eq:DiagramCurl} when $\Omega$ is
meshed with a triangular mesh or  with a
Cartesian mesh, and with a particular constraint: we want the space of
vectors $\bB$ to be discretised with 
discontinuous finite elements.
\answerI{This constraint is in fact motivated by recent results
  \cite{guillard2009behavior,jung2024behavior}, which suggests
  that not only the classical discontinuous approximation space on triangles has
  a structure that allows the preservation of curl constraints for
  hyperbolic systems, but also that a similar structure exists on
  quadrangular meshes, see e.g. \cite{JUNG2024113252} for the low order
  quadrangular case.
  Note that currently, the problem of finding curl or divergence
  preserving schemes  is usually addressed with staggered schemes
  \cite{tavelli2017pressure,balsara2001divergence,balsara2004second,
    balsara1999staggered}
  (based on the discrete de-Rham complex of \cite{arnold2018finite}),
  which makes the task of limiting for shocks while remaining
  conservative difficult, because the degrees of freedom are spread
  on all the entities of the mesh, and also because the notion of local
  conservation is hard to define. 
}

\answerI{This article is focused on addressing
  \autoref{def:harmonicGap} for these discontinuous approximation
spaces, but
without addressing  bounded cochain projection
property}, which are complicated to address in the nonconformal case, and
out of the scope of this paper. As the approximation is nonconforming,
the classical differential operators cannot be considered, and discrete
differential operators shall be defined. For the sake of simplicity, we
will consider differential operators matching with the derivation in the
sense of distributions, leading to approximation spaces that are
Cartesian products of approximation spaces on different entities of the mesh,
similar to the Hybrid High Order framework \cite{di2020hybrid}. The
complexes considered are similar to the ones of
\cite{licht2017complexes}, but include a lower number of degrees of freedom. 

The article is organised as follows.
In \autoref{sec:notations}, the notations for the mesh and the finite
element space and the discrete differential operators are given. 
Some enumeration properties of the mesh are also proven in this section.
Then in \autoref{sec:dRT}, we recall the results of
\cite{licht2017complexes} for a choice of vector finite elements inspired
by the conformal
case \eqref{eq:DiagramGradTriangle},\eqref{eq:DiagramCurlTriangle}. 
Then, in \autoref{sec:triangularMesh}, we prove that if $\Omega$ is meshed
with triangles and if $\bB$ is
approximated by the usual discontinuous finite element space, then it
can be put in a discrete diagram similar to \eqref{eq:DiagramGrad} and
\eqref{eq:DiagramCurl} where the space $A$ is approximated by the continuous
finite element space. The \autoref{def:harmonicGap} is proven for this discrete
diagram.  
Then in \autoref{sec:CartesianMesh}, the same problem is
addressed for Cartesian meshes. We first prove that \autoref{def:harmonicGap}
fails for $k=0$ with the classical piecewise constant finite element vector space. Inspired by the diagram that holds on triangles,
we prove that by enriching the classical discontinuous finite element space,
\autoref{def:harmonicGap} can be recovered.
\autoref{sec:Conclusion} is the conclusion. 

\section{Notations}
\label{sec:notations}

\subsection{Mesh notations}

We denote by $\cP$ the set of points of the mesh, by
$\cC$ the set of cells of the mesh, and by $\cF$ the set of the
faces of the mesh.
For a given entity, for example a cell $c$, we denote by
$\cF (c)$ the set of faces neighbouring the cell $c$\answerI{, and by
$\cC(f)$ the set of cells neighbouring the face $f$.}

Each face \answerI{joining points $P$ and $Q$} is supposed to be oriented,
and we denote by
$\bn_f$ the unit normal to the face $f \in \cF$ that is
positive\answerI{, namely such that the angle between the vectors $\bn_f$
  and $\overrightarrow{PQ}$
  is positive. Then the neighbouring cell of this face such that the
  normal $\bn_f$ is inward is the right cell, and the other is the left cell}.
If $\bu$ is a
vector that is discontinuous through the face $f$, then its jump
$\Jump{\cdot}$ is defined as
$$\Jump{\bu \cdot \bn_f} = \bu_R \cdot \bn_f - \bu_L \cdot \bn_f , $$
where $\bu_L$ is the value on the left and $\bu_R$ is the value on the right.

\begin{prop}[Triangular mesh of a torus]
  \label{prop:NElementsTri}
  For a triangular mesh, if $N$ denotes the number of cells, then
  $$
  \left\{
  \begin{array}{r@{\, = \, }l}
    \# \cC & N \\
    \# \cF & \dfrac{3N}{2} \\
    \# \cP & \dfrac{N}{2}. \\
  \end{array}
  \right.
  $$

\end{prop}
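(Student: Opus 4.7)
The plan is to combine Euler's formula for the torus with a two-way count of cell–face incidences. Since the mesh is a triangulation of the two-dimensional torus, which has Euler characteristic $\chi(\T^2) = 0$, one has
\[
\#\cP - \#\cF + \#\cC = 0.
\]

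First I would count incidences between cells and faces. Every triangular cell has exactly three faces in $\cF(c)$, so summation over cells gives $\sum_{c \in \cC} \#\cF(c) = 3\,\#\cC$. The same sum, reorganised by faces, equals $\sum_{f \in \cF} \#\cC(f)$. Because the torus is a closed manifold with no boundary and the mesh is periodic, each face sits between exactly two cells, i.e.\ $\#\cC(f) = 2$ for every $f \in \cF$. Equating the two expressions yields $3\,\#\cC = 2\,\#\cF$, hence $\#\cF = 3N/2$.

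Substituting this back into Euler's identity gives $\#\cP = \#\cF - \#\cC = 3N/2 - N = N/2$, completing the proof.

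The only delicate point is justifying that each face has exactly two neighbouring cells, and this rests entirely on two structural assumptions: that the mesh is a genuine simplicial triangulation (so $\#\cC(f) \geq 2$ is not spoiled by degenerate identifications), and that the domain $\Omega = \T^2$ has no boundary (so $\#\cC(f) \leq 2$ is not violated by boundary faces with a single adjacent cell). Once these are granted, everything is pure arithmetic. Note that the formulas $3N/2$ and $N/2$ being integers implicitly force $N$ to be even, which is the well-known parity constraint on the number of triangles in any torus triangulation.
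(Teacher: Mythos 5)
Your proof is correct and follows essentially the same route as the paper: a double count of cell--face incidences ($3\,\#\cC = 2\,\#\cF$) to get $\#\cF = 3N/2$, followed by Euler's formula for the torus ($\chi = 2(1-g) = 0$) to deduce $\#\cP = N/2$. The extra remarks on why each face has exactly two neighbouring cells and on the parity of $N$ are welcome but do not change the argument.
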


\begin{proof}
  We remark that the following sum
  $$
  \Sum{f \in \cF}{}
  \Sum{c \in \cC (f)}{} 1,
  $$
  can be computed in two manners: on one hand, we have two cells per face,
  so this sum is equal to $2 \# \cF$. On the other hand, when doing this
  sum, each cell is visited $3$ times (because each cell has three faces), and
  so the sum is equal to $3 N$. This gives $\# \cF = \dfrac{3N}{2}$.

  The Euler formula states that
  $$\# \cP - \# \cF + \# \cC = 2(1-g),$$
  where $g$ is the genus of the surface. As we are dealing with a
  two-dimensional domain, with periodic boundary conditions,
  this is a torus in three dimensions, so that $g=1$. This leads to
  $$\# \cP = \# \cF - \# \cC = \dfrac{3 N}{2}  - N = \dfrac{N}{2}.$$
\end{proof}

\begin{prop}[Cartesian mesh of a torus]
  \label{prop:NElementsQuad}
  For a Cartesian mesh with periodicity,
  if $N$ denotes the number of cells, then
  $$
  \left\{
  \begin{array}{r@{\, = \, }l}
    \# \cC & N \\
    \# \cF & 2N \\
    \# \cP & N. \\
  \end{array}
  \right.
  $$
\end{prop}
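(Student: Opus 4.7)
The plan is to mimic exactly the argument used in the proof of \autoref{prop:NElementsTri}, replacing the combinatorial count of $3$ faces per triangle by $4$ faces per quadrangle. The two ingredients are a double counting of incidences between faces and cells, and Euler's formula on the torus.

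First I would compute the sum
$$\Sum{f \in \cF}{} \Sum{c \in \cC(f)}{} 1$$
in two ways. On a periodic Cartesian mesh every face has exactly two neighbouring cells (no boundary faces), so the sum equals $2 \# \cF$. On the other hand, each Cartesian cell has exactly four faces, so each cell is visited four times when the order of summation is swapped, giving $4 N$. Equating the two expressions yields $\# \cF = 2 N$.

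Next I would invoke Euler's formula for a torus, which in the periodic two-dimensional setting reads
$$\# \cP - \# \cF + \# \cC = 2(1 - g) = 0,$$
since the genus of the torus is $g = 1$. Substituting $\# \cC = N$ and $\# \cF = 2N$ immediately gives $\# \cP = 2N - N = N$, which is the desired identity.

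There is no real obstacle here: both steps are elementary, and the only thing to verify carefully is that, because we are on a torus, there really are no boundary faces and no corner effects, so that the double count indeed gives a clean factor of $2$ on the one side and of $4$ on the other. This is the same subtlety that was implicit in the triangular case, and it is justified by the periodicity of the domain.
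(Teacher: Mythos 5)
Your proof is correct. The first step (the face--cell incidence double count giving $\# \cF = 2N$) is identical to the paper's. For the point count, however, you take a genuinely different route: you invoke Euler's formula $\# \cP - \# \cF + \# \cC = 2(1-g) = 0$ on the torus, exactly as the paper does in the \emph{triangular} case (\autoref{prop:NElementsTri}), whereas the paper's own proof of \autoref{prop:NElementsQuad} instead performs a second double count, of point--cell incidences: each point of a Cartesian mesh touches exactly $4$ cells and each cell has exactly $4$ corners, so $4 \, \# \cP = 4 \, \# \cC$ and $\# \cP = N$ directly. Both arguments are valid, and the numbers are consistent ($N - 2N + N = 0$ confirms Euler's formula a posteriori in the paper's version). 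Your approach has the merit of being uniform with the triangular proof and of being slightly more general: it only uses that every cell is a quadrilateral and every face separates two cells, so it would survive on an unstructured periodic quadrangular mesh where vertex valences are not all equal to $4$. The paper's incidence count, by contrast, is purely combinatorial and avoids a second appeal to the topology of the torus, but it relies on the structured (Cartesian) nature of the mesh, namely that every vertex has exactly four incident cells. Your closing remark about the absence of boundary faces under periodicity is exactly the right thing to check for the factor of $2$ in the first count.
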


\begin{proof}
  We remark that the following sum
  $$
  \Sum{f \in \cF}{}
  \Sum{c \in \cC (f)}{} 1,
  $$
  can be computed in two manners: on one hand, we have two cells per face,
  so this sum is equal to $2 \# \cF$. On the other hand, when doing this
  sum, each cell is visited $4$ times (because each cell has four faces), and
  so the sum is equal to $4 N$.
  This means that
  $$\#\cF = 2N.$$
  We are now interested in the following sum
  $$
  \Sum{p \in \cP}{}
  \Sum{c \in \cC (p)}{} 1,
  $$
  which is both equal to four times the number of points, but also
  four times the number of cells. This means that $\# \cP = N$.
\end{proof}

\subsection{Finite element space notations}

In this article, we will consider continuous and discontinuous finite element
spaces on faces and cells.
We adopt notations close of the ones proposed in
\cite{arnold2014periodic}. 
We will denote by $\PP_k$ the continuous
finite element space on triangles. If the finite element space is
discontinuous, we will denote it by $\dPP_k$. We will also consider
vectorial finite element space, $\bdPP_k$. Last, when needed, we will have
to consider finite element spaces on entities of the mesh
that are not the cells. 
In this case, we will then denote by a parenthesis indicating on which
entity of the mesh the finite element space is defined.
For example, $\dPP_k (\cC)$ is the
discontinuous finite element space of degree $k$ defined on the cells,
whereas $\dPP_k (\cF)$ is the
discontinuous finite element space of degree $k$ defined on the faces. 

The continuous and discontinuous finite element spaces are equipped with the
classical $L^2$ scalar product and its induced norm. In this article, we will
also need to deal with Cartesian products of finite element spaces
of type $\dPP_i ( \cC) \times \dPP_j ( \cF)$, on which we will use the
following scalar product
\begin{equation}
  \label{eq:scalarProduct}
  \scalar{p}{q}_{[\dPP_i ( \cC) \times \dPP_j ( \cF)]} =
  \Sum{c \in \cC }{} \Int{c}{} p_{c} q_c +
  \Sum{f \in \cF}{} \Int{f}{} p_f q_f.
\end{equation}
We define the same type of notations by replacing $\PP$ by $\QQ$ for the case
of Cartesian meshes. 

On Cartesian meshes, we will use enriched versions of
$\bdQQ_k$. For this, we define
$$\QQ_{i,j} = \left\{ p \in \R[x,y] \qquad
\degx p \leq i \quad \text{and} \quad \degy p \leq j
\right\},$$
\answerI{where $\degx$ (resp. $\degy$) is the degree in $x$ (resp. $y$),}
and can define the following cellwise continuous
vectorial finite element space on quads
\begin{equation}
  \label{eq:HDivQuad}
  \bdQQkDiv (\cC)
  = \answerI{ \left(
    \begin{array}{c}
    \dd \QQ_{k,k} + \dd \QQ_{k+1,k-1}  \\
    \dd \QQ_{k,k} + \dd \QQ_{k-1,k+1}
    \end{array}
    \right)
  }\oplus \Vect \left(
  \begin{array}{c}
    - x^{k+1} y^k \\
    x^k y^{k+1}
  \end{array}
  \right)
\end{equation}
that will be suited for the curl/div diagram \eqref{eq:DiagramCurl}, and the following
vectorial finite element space
\begin{equation}
  \label{eq:HCurlQuad}
  \bdQQkCurl (\cC)
  =
  \answerI{
    \left(
    \begin{array}{c}
    \dd \QQ_{k,k} + \dd \QQ_{k-1,k+1} \\
    \dd \QQ_{k,k} + \dd \QQ_{k+1,k-1}
    \end{array}
    \right)}
  \oplus \Vect \left(
  \begin{array}{c}
    x^k y^{k+1} \\
    x^{k+1} y^k \\
  \end{array}
  \right)
\end{equation}
that will be suited for the grad/curl diagram \eqref{eq:DiagramGrad}, and
which is nothing but a $\pi/2$ rotation of~$\bdQQkDiv (\cC)$. 
Note that the space \eqref{eq:HDivQuad} is the discontinuous version of
the space $\boldsymbol{\mathcal{S}}_r$ defined
in \cite[p. 2432]{arnold2005quadrilateral} for ensuring optimal approximation of
vectors on general quadrangular meshes.
It is clear that the cellwise
divergence of $\bdQQkDiv (\cC)$
or the cellwise curl of $\bdQQkCurl (\cC)$ map to
the following finite element space
$$\dQQkHat ( \cC) :=
\dd \QQ_{k-1} ( \cC) + \dd \QQ_{k,k-1} ( \cC) + \dd \QQ_{k-1,k} ( \cC).$$
Last, we will denote by $\KK$ the space of constant elements of the
discretisation of the space $A$, $\bKK$ the space of constant vectors of the
discretisation of the space $\bB$ and $\mathbbm{k}$ the space of
constant elements of $\dPP_k (\mathcal{F})$. 

\section{Finite element spaces inspired by the conformal case}
\label{sec:dRT}

In the conformal case, it is known that the \autoref{def:harmonicGap}
is ensured
for the following complexes
\begin{equation}
  \label{eq:DiagramConformal}
  \left\{
  \begin{array}{l}
    \PP_{k+1}
    \quad
    \diag{\nablaperp}
    \quad
    \RT_{k+1} ^{\triangle} ( \cC )
    \quad
    \diag{\divx}
    \quad
    \dPP_{k} ( \cC) \\
    \QQ_{k+1}
    \quad
    \diag{\nablaperp}
    \quad
    \RT_{k+1} ^{\square} ( \cC )
    \quad
    \diag{\divx}
    \quad
    \dQQ_{k} ( \cC) \\
  \end{array}
  \right. , 
\end{equation}
on both the triangular and quadrangular case. The trace of the Raviart-Thomas
finite element spaces
$\RT_{k+1} ^{\square}$ and $\RT_{k+1} ^{\triangle}$
are known to be of degree $k$ in both the triangular
and quadrangular case. Therefore, by relaxing the
normal continuity constraint, the following complexes may be considered
\begin{equation}
  \label{eq:DiagramNonConformal}
  \left\{
  \begin{array}{l}
    \PP_{k+1}
    \quad
    \diag{\nablaperp}
    \quad
    \dRT_{k+1} ^{\triangle}( \cC )
    \quad
    \diag{\ddivx}
    \quad
    \dPP_{k} ( \cC) \times \dPP_{k} ( \cF) \\
    \QQ_{k+1}
    \quad
    \diag{\nablaperp}
    \quad
    \dRT_{k+1} ^{\square} ( \cC )
    \quad
    \diag{\ddivx}
    \quad
    \dQQ_{k} ( \cC) \times \dPP_{k} ( \cF).
  \end{array}
  \right.
\end{equation}
The discrete maps are defined as follows:
\begin{itemize}
\item $\nablaperp$ is the classical $\nablaperp$ operator: 
  $$\forall c \in \cC \qquad \forall p \in \PP_{k+1} \quad 
  \nablaperp (p)_{|c} = \nablaperp (p_{|c}).$$
\item $\ddivx$ is the divergence where the derivation is taken in the sense of
  distributions:
  $$
  \forall \bu \in \bdPP_{k} \quad
  \left\{
  \begin{array}{l}
    \forall c \in \cC \qquad  
    \ddivx (\bu)_{|c} = \divx (\bu_{|c}) \\
    \forall f \in \cF \qquad  
    \ddivx (\bu)_{|f} = \Jump{\bu \cdot \bn_f}. \\
  \end{array}
  \right.
  $$
\end{itemize}
We first compute the dimension of each of the finite element spaces
of \eqref{eq:DiagramNonConformal}
\begin{prop}[Dimension of the finite element spaces]
  \label{prop:nonconformalDimension}
  If the mesh is triangular and periodic, then
  $$
  \left\{
  \begin{array}{r@{\, = \, }l}
    \dim \PP_{k+1} & \dfrac{N (k+1) ^2}{2}\\
    \dim \dRT_{k+1} ^{\triangle} ( \cC ) & N (k+1)(k+3) \\
    \dim \left( \dPP_k ( \cF ) \times \dPP_{k} ( \cC) \right) &
    \dfrac{N(k+1)(k+5)}{2}.
    \\
  \end{array}
  \right. , 
  $$
  whereas for a Cartesian periodic mesh,
  $$
  \left\{
  \begin{array}{r@{\, = \, }l}
    \dim \QQ_{k+1} & N (k+1) ^2\\
    \dim \dRT_{k+1} ^{\square} & 2 N (k+2)(k+1)\\
    \dim \left( \dPP_k ( \cF ) \times \dQQ_{k} ( \cC) \right) &
    N(k+1)(k+3).
    \\
  \end{array}
  \right.
  $$
\end{prop}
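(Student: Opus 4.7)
The plan is to decompose each finite element space into contributions attached to the different mesh entities (cells, faces, vertices), compute the local dimension on each such entity, and sum using the cardinalities $\#\cC$, $\#\cF$, $\#\cP$ provided by \autoref{prop:NElementsTri} and \autoref{prop:NElementsQuad}. Because the domain is a torus (hence no boundary), the partition of degrees of freedom by supporting entity carries over cleanly with no boundary correction term.

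For the purely discontinuous spaces, there is no inter-entity coupling, so the total dimension is simply the per-entity local dimension times the number of entities. I would use the standard local dimension formulas: $\dim \dPP_k = (k+1)(k+2)/2$ per triangle, $\dim \dQQ_k = (k+1)^2$ per quad, $\dim \dPP_k = k+1$ per face, $\dim \dRT^{\triangle}_{k+1} = (k+1)(k+3)$ per triangle (from $RT_{k+1} = (\PP_k)^2 \oplus \bx\,\widetilde{\PP}_k$), and $\dim \dRT^{\square}_{k+1} = 2(k+1)(k+2)$ per quad. Combining with $\#\cC = N$, and $\#\cF = 3N/2$ on triangles or $\#\cF = 2N$ on quads, produces the claimed values for $\dim \dRT^{\triangle}_{k+1}$, $\dim \dRT^{\square}_{k+1}$, $\dim(\dPP_k(\cF)\times\dPP_k(\cC))$ and $\dim(\dPP_k(\cF)\times\dQQ_k(\cC))$ after a short simplification (for instance $N(k+1)(k+2)/2 + 3N(k+1)/2 = N(k+1)(k+5)/2$ in the triangular case).

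For the continuous scalar spaces, I would use the classical partition of nodal degrees of freedom by the entity to which they are attached. For $\PP_{k+1}$ on a triangle this gives $1$ DOF per vertex, $k$ interior DOFs per edge, and $(k+2)(k+3)/2 - 3 - 3k = k(k-1)/2$ interior DOFs per cell; plugging in $\#\cP = N/2$, $\#\cF = 3N/2$, $\#\cC = N$ from \autoref{prop:NElementsTri} yields
\[
\dim \PP_{k+1} \;=\; \tfrac{N}{2} + \tfrac{3N}{2}k + N\tfrac{k(k-1)}{2} \;=\; \tfrac{N(k+1)^2}{2}.
\]
For $\QQ_{k+1}$ on a Cartesian cell the analogous partition is $1$ DOF per vertex, $k$ per edge, $k^2$ per cell; using $\#\cP = \#\cC = N$ and $\#\cF = 2N$ from \autoref{prop:NElementsQuad} gives $N + 2Nk + Nk^2 = N(k+1)^2$.

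No serious obstacle is expected: the proof is entirely a DOF bookkeeping argument. The only point that requires care is that the entity-by-entity count is exact only because the mesh is periodic (Propositions \ref{prop:NElementsTri} and \ref{prop:NElementsQuad} already absorb this through the Euler characteristic of the torus), so no shared-boundary DOFs are overcounted.
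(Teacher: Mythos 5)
Your proposal is correct and follows essentially the same route as the paper: entity-by-entity degree-of-freedom bookkeeping using the cardinalities of \autoref{prop:NElementsTri} and \autoref{prop:NElementsQuad}, combined with the standard local dimensions of $\PP_{k+1}$, $\QQ_{k+1}$, $\dRT_{k+1}^{\triangle}$ and $\dRT_{k+1}^{\square}$. As a minor remark, your value $\dim \dRT_{k+1}^{\square} = 2N(k+1)(k+2)$ agrees with the proposition's statement, whereas the paper's own proof contains a slip in its final line for that space (it writes $N(k+1)(k+3)$ despite having just quoted the local dimension $2(k+1)(k+2)$).
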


\begin{proof}
  We first address the triangular case.
    A $\PP_{k+1}$ continuous finite element space has
  \begin{itemize}
  \item $1$ degree of freedom on each point. 
  \item $k$ degrees of freedom on each face.
  \item $\dfrac{k(k-1)}{2}$ degrees of freedom inside each cell.
  \end{itemize}
  Adding all these degrees of freedom leads to
  $$
  \begin{array}{r@{\, = \, }l}
    \dim \PP_{k+1} & 1 \times \#\cP + k \#\cF + \dfrac{k(k-1)}{2} \,
    \#\cC \\
    & \dfrac{N}{2} + k \, \dfrac{3N}{2} +
    \dfrac{k(k-1)}{2} \, N
    \\
    & \dfrac{N}{2} \, \left( 1 + 3 k + k(k-1)\right) \\
    & \dfrac{N}{2} \, \left( k^2 + 2 k +1 \right) \\
    \dim \PP_{k+1}
    & \dfrac{N (k+1)^2}{2}. \\
  \end{array}
  $$
  Then the $(k+1)$th order Raviart-Thomas simplicial finite element is known
  for having $(k+1)(k+3)$ degrees of freedom (see e.g.
  \cite[Lemma 14.6 p.137]{ern2021finite}\footnote{Note that regarding
    the notations, what is denoted by $\RT_{k}  ^{\triangle}$
    in \cite{ern2021finite}
    is denoted here $\RT_{k+1}  ^{\triangle}$.}).
  As the space is discontinuous, this
  gives
  $$\dim \dRT_{k+1} ^{\triangle}= N (k+1)(k+3).$$
  It remains to compute the dimension of
  $\dPP_k ( \cF ) \times \dPP_{k} ( \cC)$
  $$
  \begin{array}{r@{\, = \, }l}
    \dim \left( \dPP_k ( \cF ) \times \dPP_{k} ( \cC) \right) &
    (k+1) \# \cF + \dfrac{(k+1)(k+2)}{2} \, \# \cC \\
    & (k+1) \, \dfrac{3 N}{2} + \dfrac{(k+1)(k+2)}{2} \, N \\
    & \dfrac{N}{2} \, (k+1)(k+5).
  \end{array}
  $$
  We are now interested in the dimension of the finite element spaces for
  the quadrangular mesh.
  We begin by computing the dimension of $\QQ_{k+1}$.
  An element of $\QQ_{k+1}$ has
  \begin{itemize}
  \item $1$ degree of freedom at each point,
  \item $k$ degrees of freedom at each face,
  \item $k^2$ degrees of freedom inside each cell.
  \end{itemize}
  Summing all these degrees of freedom and using \autoref{prop:NElementsQuad}
  gives
  $$\dim \QQ_{k+1} = \# \cP + k \# \cF + k^2 \# \cC =
  N + k (2N) + k^2 N = N(k+1)^2.
  $$
  Then the $(k+1)$th order Raviart-Thomas quadrangular finite element is known
  for having $2(k+1)(k+2)$ degrees of freedom (see e.g.
  \cite[Section 14.5.2 p.142]{ern2021finite}
  As the space is discontinuous, this gives
  $$\dim \dRT_{k+1} ^{\square} = N (k+1)(k+3).$$
  It remains to compute the dimension of
  $\dPP_k ( \cF ) \times \dQQ_{k} ( \cC)$
  $$
  \begin{array}{r@{\, = \, }l}
    \dim \left( \dPP_k ( \cF ) \times \dQQ_{k} ( \cC) \right) &
    (k+1) \# \cF + (k+1)^2 \,  \# \cC \\
    & (k+1) \, 2N + (k+1)^2 \, N \\
    & N(k+1)(k+3).
  \end{array}
  $$

\end{proof}

Properties of the complex \eqref{eq:DiagramNonConformal} was addressed in a
more general framework in \cite{licht2017complexes}, and lead in dimension 2
to the following proposition
\begin{prop}
\label{prop:SummaryNonConformalCurlDiv}
The discrete diagram \eqref{eq:DiagramNonConformal}
ensures the \autoref{def:harmonicGap}.
Moreover,
for triangles:
$$
\left\{
\begin{array}{r@{\, = \, }l}
  \PP_{k+1} / \KK & \ker \left( \nablaperp \right)  \\
  \left( \dPP_k ( \cF ) \times \dPP_k ( \cC) \right) / \mathbbm{k} &
  \Range \left( \ddivx \right), 
\end{array}
\right. 
$$
and for quadrangles
$$
\left\{
\begin{array}{r@{\, = \, }l}
  \QQ_{k+1} / \KK & \ker \left( \nablaperp \right)  \\
  \left( \dPP_k ( \cF ) \times \dQQ_k ( \cC) \right) / \mathbbm{k} &
  \Range \left( \ddivx \right).
\end{array}
\right. 
$$
\end{prop}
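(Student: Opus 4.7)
The plan is to match the three discrete cohomology spaces of the complex~\eqref{eq:DiagramNonConformal} with the Betti numbers $(1,2,1)$ of the torus, by direct computation at the two ends of the complex and then by an Euler characteristic argument in the middle. First I would check that \eqref{eq:DiagramNonConformal} is indeed a complex, i.e.\ $\ddivx \circ \nablaperp = 0$: inside each cell this is the classical identity $\divx \nablaperp p = 0$, while across a face $f$, $\Jump{\nablaperp p \cdot \bn_f}$ is the jump of the tangential derivative of $p$, which vanishes because $p \in \PP_{k+1}$ (respectively $\QQ_{k+1}$) is continuous along $f$. The Cartesian and the triangular cases are handled verbatim.

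For the left end, if $\nablaperp p = 0$ cellwise then $p$ is a constant on each cell, and continuity of $p$ across the mesh propagates this into a single global constant, so $\ker(\nablaperp) = \KK$.

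For the right end, I would characterize the orthogonal complement of $\Range(\ddivx)$ in $\dPP_k(\cC) \times \dPP_k(\cF)$ (respectively $\dQQ_k(\cC) \times \dPP_k(\cF)$) for the scalar product~\eqref{eq:scalarProduct}. A pair $((q_c)_{c \in \cC},\, (q_f)_{f \in \cF})$ lies in this orthogonal iff
$$
\Sum{c \in \cC}{} \Int{c}{} q_c\, \divx \bu
\,+\, \Sum{f \in \cF}{} \Int{f}{} q_f\, \Jump{\bu \cdot \bn_f} \,=\, 0
\qquad \forall\, \bu \in \dRT_{k+1}.
$$
A cell-by-cell integration by parts, followed by a regrouping of the boundary integrals face by face using the sign convention for $\bn_f$, rewrites this as
$$
-\Sum{c \in \cC}{} \Int{c}{} \nabla q_c \cdot \bu
\,+\, \Sum{f \in \cF}{} \Int{f}{} \bigl[ (q_L - q_f)\, \bu_L \cdot \bn_f - (q_R - q_f)\, \bu_R \cdot \bn_f \bigr] \,=\, 0.
$$
Testing with $\bu$ supported in a single cell and with vanishing normal traces on all its faces forces $\nabla q_c = 0$, so each $q_c$ is a cell-constant; testing then with $\bu$ whose normal trace on one side of a given face realizes any prescribed element of $\dPP_k(f)$, while vanishing on the other side, forces $q_L|_f = q_f = q_R|_f$. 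Connectedness of the periodic mesh then forces all the cell-constants to coincide and $q_f$ to equal this common constant on every face, so the cokernel of $\ddivx$ is one-dimensional and identifies with $\mathbbm{k}$.

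To conclude with the middle cohomology, I would invoke the Euler characteristic identity
$$
\dim V_0 - \dim V_1 + \dim V_2 \,=\, b_0^h - b_1^h + b_2^h.
$$
Plugging the dimensions from \autoref{prop:nonconformalDimension} gives $0$ on the left in both the triangular and the Cartesian case; combined with $b_0^h = b_2^h = 1$ this yields $b_1^h = 2$, matching the continuous first Betti number of the torus, and hence \autoref{def:harmonicGap}. The main obstacle is the duality step: the argument crucially uses that the normal traces of $\dRT_{k+1}$ on a single face span all of $\dPP_k(f)$ independently of the other faces, a standard consequence of the degrees-of-freedom description of the Raviart-Thomas space, for which the discontinuity of the vector space is precisely what is needed.
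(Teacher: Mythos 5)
The paper does not actually prove \autoref{prop:SummaryNonConformalCurlDiv}: it imports it wholesale from \cite{licht2017complexes}, so your self-contained argument is necessarily a different route. It is essentially correct. Your treatment of the two ends is sound: the complex property reduces to the continuity of the tangential derivative of a continuous $p$ across faces, $\ker(\nablaperp)=\KK$ follows from connectedness, and your duality computation for the cokernel of $\ddivx$ works because (i) the interior degrees of freedom of $\RT_{k+1}$ pair nondegenerately with $\left(\PP_{k-1}\right)^2$ (resp.\ $\QQ_{k-1,k}\times\QQ_{k,k-1}$), which contains $\nabla q_c$, and (ii) the broken space lets you prescribe the normal trace on one side of one face independently of everything else. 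Your Euler-characteristic step is the genuinely different ingredient: the alternating sum of the dimensions in \autoref{prop:nonconformalDimension} does vanish in both cases, giving $\dim H^1 = 2$ with no need for the explicit decomposition of divergence-free elements that the paper builds (in its own later sections, \autoref{prop:decompositionTri} and \autoref{prop:kerRankTri}) to count $\ker\left(\ddivx\right)$ by hand. What the paper's style of argument buys, and what you should still add, is the identification of the middle cohomology with the constant vectors rather than merely its dimension: \autoref{def:harmonicGap} as used throughout the paper means $\ker\left(\ddivx\right)=\Range\left(\nablaperp\right)\oplus\bKK$, not just $\dim H^1=b_1$. This is a one-line supplement to your proof --- $\bKK\subset\ker\left(\ddivx\right)$ is immediate, and $\bKK\perp\Range\left(\nablaperp\right)$ follows from the same integration by parts you already use (summing $\bkperp\cdot\nabla\psi$ over cells and cancelling the face terms by continuity of $\psi$), so $\bKK$ injects into $H^1$ and equals it by your dimension count --- but without it the claim that the discrete cohomology ``matches exactly'' the continuous one is not fully established.
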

The location of the degrees of freedom for this discrete de-Rham
complex for Cartesian meshes is summarised in \autoref{fig:deRhamQuadDiv}, and
in \autoref{fig:deRhamTriDivRT} for triangles.

By changing the representation of the linear forms, which is equivalent
to rotating of $\pi/2$ the vector spaces, the following
proposition is also obtained:
\begin{prop}
\label{prop:SummaryNonConformalGradCurl}
The discrete diagram
$$
\left\{
\begin{array}{l}
  \PP_{k+1}
  \quad
  \diag{\gradx}
  \quad
  \dN_{k+1} ^{\triangle}( \cC )
  \quad
  \diag{\dcurlS}
  \quad
  \dPP_{k} ( \cC) \times \dPP_{k} ( \cF) \\
  \QQ_{k+1}
  \quad
  \diag{\gradx}
  \quad
  \dN_{k+1} ^{\square} ( \cC )
  \quad
  \diag{\dcurlS}
  \quad
  \dQQ_{k} ( \cC) \times \dPP_{k} ( \cF).
\end{array}
\right.
$$
where 
$\dcurlS$ is $\curlS$ in the sense of distributions, 
ensures the \autoref{def:harmonicGap}. Moreover,
for triangles:
$$
\left\{
\begin{array}{r@{\, = \, }l}
  \PP_{k+1} / \KK & \ker \left( \gradx \right)  \\
  \left( \dPP_k ( \cF ) \times \dPP_k ( \cC) \right) / \mathbbm{k} &
  \Range \left( \dcurlS \right), 
\end{array}
\right. 
$$
and for quadrangles
$$
\left\{
\begin{array}{r@{\, = \, }l}
  \QQ_{k+1} / \KK & \ker \left( \gradx \right)  \\
  \left( \dPP_k ( \cF ) \times \dQQ_k ( \cC) \right) / \mathbbm{k} &
  \Range \left( \dcurlS \right).
\end{array}
\right. 
$$
\end{prop}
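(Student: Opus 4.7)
The plan is to reduce \autoref{prop:SummaryNonConformalGradCurl} to the already established \autoref{prop:SummaryNonConformalCurlDiv} by exploiting the pointwise rotation by $\pi/2$ that links the two diagrams, a fact the author already observed on Cartesian cells right after equation \eqref{eq:HCurlQuad} and which has a direct analogue on triangles. The strategy is to build an explicit chain isomorphism between the curl/div complex \eqref{eq:DiagramNonConformal} and the grad/curl complex of the present proposition, so that every property transfers verbatim.

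The first step is to introduce the rotation $R : \R^2 \to \R^2$ defined by $R \bu = (-u_y, u_x)^T$ for $\bu = (u_x, u_y)^T$, lifted cellwise to discontinuous vector finite element spaces, and to check that $R$ is a bijection from $\dRT_{k+1}^{\triangle}(\cC)$ onto $\dN_{k+1}^{\triangle}(\cC)$ and from $\bdQQkDiv(\cC)$ onto $\bdQQkCurl(\cC)$. On Cartesian meshes this is immediate by inspection of \eqref{eq:HDivQuad} and \eqref{eq:HCurlQuad}, whose two polynomial blocks swap components under $R$ while the final spanning vector is sent, up to sign, onto its counterpart in the other space. On triangles it follows from the classical observation that the N\'ed\'elec first-kind shape functions are, by construction, the $\pi/2$ rotations of the Raviart-Thomas shape functions. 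The second step is to check that $R$ intertwines the discrete differential operators, up to sign: from $\nablaperp a = R(\gradx a)$ and $R^2 = -\Id$ one gets $R \circ \nablaperp = -\gradx$; a direct computation yields $\nablaperp \cdot (R \bu) = -\nabla \cdot \bu$ in the interior of every cell; and, choosing the face tangent with the convention $\bt_f = -R\bn_f$, the distributional jump term in $\dcurlS(R \bu)$ reduces to $-\Jump{\bu \cdot \bn_f}$, so that globally $\dcurlS \circ R = -\ddivx$.

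These two ingredients assemble into a chain isomorphism with vertical maps $(-\Id, R, -\Id)$ from the curl/div complex to the grad/curl complex: both squares commute and each vertical map is an isomorphism between finite dimensional spaces. Consequently the kernel of $\gradx$, the range of $\dcurlS$, and the quotient $\ker(\dcurlS)/\Range(\gradx)$ are in natural bijection with $\ker(\nablaperp)$, $\Range(\ddivx)$ and $\ker(\ddivx)/\Range(\nablaperp)$ respectively. The subspaces of constants are preserved by these identifications, because $-\Id$ fixes the one dimensional spaces $\KK$ and $\mathbbm{k}$ and $R$ maps $\bKK$ onto itself. Both the qualitative statement \autoref{def:harmonicGap} and the explicit descriptions of kernels and ranges given in \autoref{prop:SummaryNonConformalCurlDiv} therefore transfer verbatim to the grad/curl complex, in both the triangular and the Cartesian case. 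The only genuine difficulty I anticipate is the orientation bookkeeping for $\bt_f$ on each face that makes the face jump term of $\dcurlS$ match the face jump term of $\ddivx$ after rotation; this is harmless, since it only affects global signs that disappear when one passes to kernels and ranges.
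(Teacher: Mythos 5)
Your proposal is correct and follows essentially the same route as the paper, which obtains this proposition from \autoref{prop:SummaryNonConformalCurlDiv} precisely ``by changing the representation of the linear forms, which is equivalent to rotating of $\pi/2$ the vector spaces''; you have simply made the chain isomorphism explicit. The only blemish is a sign: with $R\bu=(-u_y,u_x)^T$ one gets $\curlS(R\bu)=+\divx\,\bu$ rather than $-\divx\,\bu$, but as you yourself note such global signs are immaterial for kernels, ranges and the cohomology statement.
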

The location of the degrees of freedom for this discrete de-Rham
complex for Cartesian meshes is summarised in \autoref{fig:deRhamQuadCurl} and
in \autoref{fig:deRhamTriCurlN} for triangles.

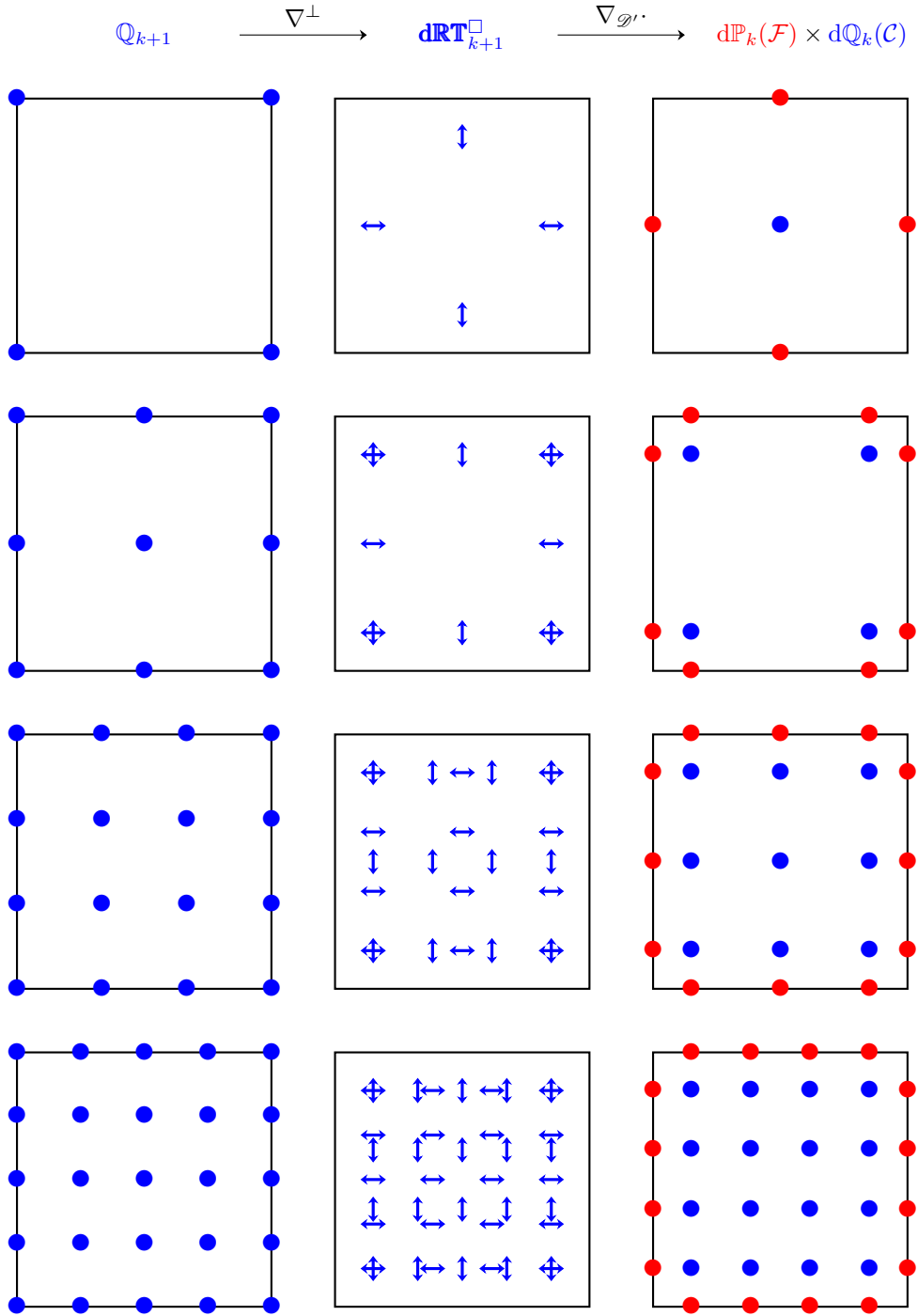
\begin{figure}
  \begin{center}
    \begin{tikzpicture}[scale=0.9]

      \node at (2,15) {\color{blue} $\QQ_{k+1}$};
      \draw[->] (3.5,15) -- (5.5,15) node [midway, above] {$\nablaperp$};
      \node at (7,15) {\color{blue}$\dRT_{k+1} ^{\square}$};
      \draw[->] (8.5,15) -- (10.5,15) node [midway, above] {$\ddivx$};
      \node at (12.5,15) {$  {\color{red} \dPP_k ( \cF )} \times {\color{blue} \dQQ_{k} ( \cC)}$};

      \LagrangeQuad{2}{12}{4}{1};
      \LagrangeQuad{2}{7}{4}{2};
      \LagrangeQuad{2}{2}{4}{3};
      \LagrangeQuad{2}{-3}{4}{4};

      \node[rectangle,draw, minimum size=3.6cm,thick] (r) at (7,12) {};
      \draw[color=blue,<->,very thick] (7,-0.2+12-0.5*2.8) -- (7,0.2+12-0.5*2.8);
      \draw[color=blue,<->,very thick] (7,-0.2+12+0.5*2.8) -- (7,0.2+12+0.5*2.8);
      \draw[color=blue,<->,very thick] (-0.2+7-0.5*2.8,12) -- (0.2+7-0.5*2.8,12);
      \draw[color=blue,<->,very thick] (-0.2+7+0.5*2.8,12) -- (0.2+7+0.5*2.8,12);
      \dRaviartThomasQuad{7}{7}{2.8}{1};
      \dRaviartThomasQuad{7}{2}{2.8}{2};
      \dRaviartThomasQuad{7}{-3}{2.8}{3};

      \node[rectangle,draw, minimum size=3.6cm,thick] (r) at (12,12) {};
      \node[color=blue] at (12,12) {\LARGE $\bullet$};
      \node[color=red] at (12-0.5*4,12) {\LARGE $\bullet$};
      \node[color=red] at (12+0.5*4,12) {\LARGE $\bullet$};
      \node[color=red] at (12,12-0.5*4) {\LARGE $\bullet$};
      \node[color=red] at (12,12+0.5*4) {\LARGE $\bullet$};
      
      \dFaceCellsQuad{12}{7}{2.8}{1};
      \dFaceCellsQuad{12}{2}{2.8}{2};
      \dFaceCellsQuad{12}{-3}{2.8}{3};


    \end{tikzpicture}
  \end{center}
  \caption{\label{fig:deRhamQuadDiv} Representation of the degrees of
    freedom of the finite element
    spaces involved in the curl/div de-Rham complex for Cartesian meshes
    for $k=0,1,2$ and $3$.
    Points denote scalar degrees of freedom, whereas arrows denote vectorial degrees of freedom. Both scalar
  and vectorial volume degrees of freedom are represented in blue, whereas the degrees of freedom in the face finite element space are represented in red.}
\end{figure}

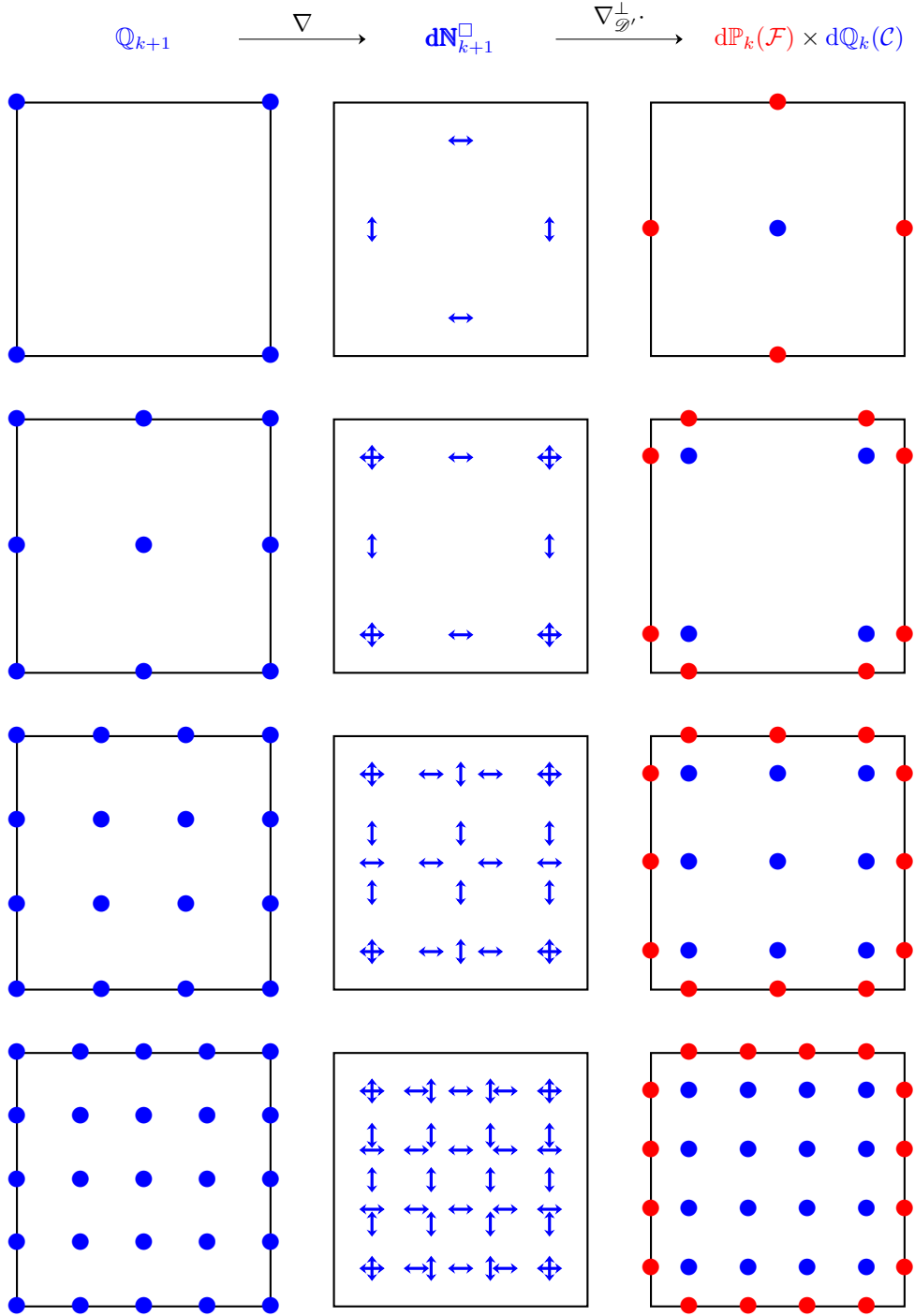
\begin{figure}
  \begin{center}
    \begin{tikzpicture}[scale=0.9]

      \node at (2,15) {\color{blue} $\QQ_{k+1}$};
      \draw[->] (3.5,15) -- (5.5,15) node [midway, above] {$\gradx$};
      \node at (7,15) {\color{blue}$\dN_{k+1} ^{\square}$};
      \draw[->] (8.5,15) -- (10.5,15) node [midway, above] {$\dcurlS$};
      \node at (12.5,15) {$  {\color{red} \dPP_k ( \cF )} \times {\color{blue} \dQQ_{k} ( \cC)}$};

      \LagrangeQuad{2}{12}{4}{1};
      \LagrangeQuad{2}{7}{4}{2};
      \LagrangeQuad{2}{2}{4}{3};
      \LagrangeQuad{2}{-3}{4}{4};

      \node[rectangle,draw, minimum size=3.6cm,thick] (r) at (7,12) {};
      \draw[color=blue,<->,very thick] (-0.2+7,12-0.5*2.8) -- (0.2+7,12-0.5*2.8);
      \draw[color=blue,<->,very thick] (-0.2+7,12+0.5*2.8) -- (0.2+7,12+0.5*2.8);
      \draw[color=blue,<->,very thick] (7-0.5*2.8,-0.2+12) -- (7-0.5*2.8,0.2+12);
      \draw[color=blue,<->,very thick] (7+0.5*2.8,-0.2+12) -- (7+0.5*2.8,0.2+12);
      \dNedelecQuad{7}{7}{2.8}{1};
      \dNedelecQuad{7}{2}{2.8}{2};
      \dNedelecQuad{7}{-3}{2.8}{3};

      \node[rectangle,draw, minimum size=3.6cm,thick] (r) at (12,12) {};
      \node[color=blue] at (12,12) {\LARGE $\bullet$};
      \node[color=red] at (12-0.5*4,12) {\LARGE $\bullet$};
      \node[color=red] at (12+0.5*4,12) {\LARGE $\bullet$};
      \node[color=red] at (12,12-0.5*4) {\LARGE $\bullet$};
      \node[color=red] at (12,12+0.5*4) {\LARGE $\bullet$};
      
      \dFaceCellsQuad{12}{7}{2.8}{1};
      \dFaceCellsQuad{12}{2}{2.8}{2};
      \dFaceCellsQuad{12}{-3}{2.8}{3};
      
    \end{tikzpicture}
  \end{center}
  \caption{\label{fig:deRhamQuadCurl} Representation of the degrees of
    freedom of the finite element
    spaces involved in the grad/curl de-Rham complex for Cartesian meshes
    for $k=0,1,2$ and $3$.
    Same code for colors as in \autoref{fig:deRhamQuadDiv} is used.}
\end{figure}

\begin{figure}
  \begin{center}
    \begin{tikzpicture}[scale=0.9]
      \def\LagrangeTriangle#1#2#3#4{
        \draw[-,thick] (#1-#3/3,#2-#3/3) -- (#1+2*#3/3,#2-#3/3) -- (#1-#3/3,#2+2*#3/3) -- cycle;
        \foreach \x in {0,...,#4}{
          \pgfmathsetmacro\maxy{#4-\x};
  
          \foreach \y in {0,...,\maxy}{
            \pgfmathsetmacro\xx{(#1-#3/3)*(1-\x/#4-\y/#4)+(#1+2*#3/3)*\y/#4+(#1-#3/3)*\x/#4};
            \pgfmathsetmacro\yy{(#2-#3/3)*(1-\x/#4-\y/#4)+(#2-#3/3)*\y/#4+(#2+2*#3/3)*\x/#4};
            \node[color=blue] at (\xx,\yy) {\LARGE $\bullet$};
          }
        }
      }

      \node at (2,15.5) {\color{blue} $\PP_{k+1}$};
      \draw[->] (3.5,15.5) -- (5.5,15.5) node [midway, above] {$\nablaperp$};
      \node at (7,15.5) {\color{blue}$\dRT_{k+1} ^{\triangle}$};
      \draw[->] (8.5,15.5) -- (10.5,15.5) node [midway, above] {$\ddivx$};
      \node at (12.5,15.5) {$  {\color{red} \dPP_k ( \cF )} \times {\color{blue} \dPP_{k} ( \cC)}$};
      \LagrangeTriangle{2}{12}{4}{1};
      \LagrangeTriangle{2}{7}{4}{2};
      \LagrangeTriangle{2}{2}{4}{3};
      \LagrangeTriangle{2}{-3}{4}{4};

      \def\RTVectorDG#1#2#3#4{
        \draw[-,thick] (#1-4/3,#2-4/3) -- (#1+2*4/3,#2-4/3) -- (#1-4/3,#2+2*4/3) -- cycle;
        \pgfmathsetmacro\maxm{#4-1};
        \xifnum{ \maxm > 0 }{
          \foreach \x in {0,...,\maxm}{
            \pgfmathsetmacro\maxy{\maxm-\x};
            \foreach \y in {0,...,\maxy}{
              \pgfmathsetmacro\xx{(#1-#3/3)*(1-\x/\maxm-\y/\maxm)+(#1+2*#3/3)*\y/\maxm+(#1-#3/3)*\x/\maxm};
              \pgfmathsetmacro\yy{(#2-#3/3)*(1-\x/\maxm-\y/\maxm)+(#2-#3/3)*\y/\maxm+(#2+2*#3/3)*\x/\maxm};
              \draw[<->,color=blue,very thick] (\xx-0.2,\yy) -- (\xx+0.2,\yy);
              \draw[<->,color=blue,very thick] (\xx,\yy-0.2) -- (\xx,\yy+0.2);
            }
          }
        }
        {}
        \pgfmathsetmacro\facta{0.87};
        \pgfmathsetmacro\maxxa{#4+1};
        \foreach \xa in {1,...,\maxxa}{
          \pgfmathsetmacro\xxx{#1-4/3*\facta + 4*\xa/(\maxxa+1)*\facta};
          \pgfmathsetmacro\yya{#2-4/3*\facta};
          \draw[<->,color=blue,very thick] (\xxx,\yya-0.2) -- (\xxx,\yya+0.2);
          \pgfmathsetmacro\yyy{#2-4/3*\facta + 4*\xa/(\maxxa+1)*\facta};
          \pgfmathsetmacro\xxa{#1-4/3*\facta};
          \draw[<->,color=blue,very thick] (\xxa-0.2,\yyy) -- (\xxa+0.2,\yyy);
          \pgfmathsetmacro\xxxx{#1 + (2*4/3*\facta) - 4*\xa/(\maxxa+1)*\facta};
          \pgfmathsetmacro\yyyy{#2 -   (4/3*\facta) + 4*\xa/(\maxxa+1)*\facta};
          \pgfmathsetmacro\deltal{sqrt(2)/2*0.2};
          \draw[<->,color=blue,very thick] (\xxxx-\deltal,\yyyy-\deltal) -- (\xxxx+\deltal,\yyyy+\deltal);
        }
      }

      \RTVectorDG{7}{12}{2.8}{0};
      \RTVectorDG{7}{7}{2.8}{1};
      \draw[<->,color=blue,very thick] (7-0.2,7) -- (7+0.2,7);
      \draw[<->,color=blue,very thick] (7,7-0.2) -- (7,7+0.2);
      \RTVectorDG{7}{2}{2.8}{2};
      \RTVectorDG{7}{-3}{2.8}{3};

      \def\dFaceCellsTriangle#1#2#3#4{
        \draw[-,thick] (#1-4/3,#2-4/3) -- (#1+2*4/3,#2-4/3) -- (#1-4/3,#2+2*4/3) -- cycle;
        \pgfmathsetmacro\newvar{#4+1};
        \xifnum{ \newvar < 0}
                   {
                   }
                   {
                     \xifnum{ \newvar > 0}{
                       \foreach \x in {0,...,\newvar}{
                         \pgfmathsetmacro\maxy{\newvar-\x};
  
                         \foreach \y in {0,...,\maxy}{
                           \pgfmathsetmacro\xx{(#1-#3/3)*(1-\x/\newvar-\y/\newvar)+(#1+2*#3/3)*\y/\newvar+(#1-#3/3)*\x/\newvar};
                           \pgfmathsetmacro\yy{(#2-#3/3)*(1-\x/\newvar-\y/\newvar)+(#2-#3/3)*\y/\newvar+(#2+2*#3/3)*\x/\newvar};
                           \node[color=blue] at (\xx,\yy) {\LARGE $\bullet$};
                         }
                       }
                     }
                     {
                       \node[color=blue] at (#1,#2) {\LARGE $\bullet$};
                     }
                   }

                   \pgfmathsetmacro\maxxa{#4+2};
                   \foreach \xa in {1,...,\maxxa}{
                     \pgfmathsetmacro\xxx{#1-4/3 + 4*\xa/(\maxxa+1)};
                     \pgfmathsetmacro\yyy{#2-4/3 + 4*\xa/(\maxxa+1)};
                     \node[color=red] at (\xxx,#2-4/3) {\LARGE $\bullet$};
                     \node[color=red] at (#1-4/3,\yyy) {\LARGE $\bullet$};
                     \pgfmathsetmacro\xxxx{#1 + (2*4/3) - 4*\xa/(\maxxa+1)};
                     \pgfmathsetmacro\yyyy{#2 -   (4/3) + 4*\xa/(\maxxa+1)};
                     \node[color=red] at (\xxxx,\yyyy) {\LARGE $\bullet$};
                   }
                   
      }
      \dFaceCellsTriangle{12}{12}{2.8}{-1};
      \dFaceCellsTriangle{12}{7}{2.8}{0};
      \dFaceCellsTriangle{12}{2}{2.8}{1};
      \dFaceCellsTriangle{12}{-3}{2.8}{2};

    \end{tikzpicture}
  \end{center}
  \caption{\label{fig:deRhamTriDivRT}
    Representation of the finite element spaces involved in the
    curl/div de-Rham complex for triangular meshes for $k=0,1,2$ and $3$.
    Same code for colors as in \autoref{fig:deRhamQuadDiv} is used.
  }
\end{figure}

\begin{figure}
  \begin{center}
    \begin{tikzpicture}[scale=0.9]
      \def\LagrangeTriangle#1#2#3#4{
        \draw[-,thick] (#1-#3/3,#2-#3/3) -- (#1+2*#3/3,#2-#3/3) -- (#1-#3/3,#2+2*#3/3) -- cycle;
        \foreach \x in {0,...,#4}{
          \pgfmathsetmacro\maxy{#4-\x};
  
          \foreach \y in {0,...,\maxy}{
            \pgfmathsetmacro\xx{(#1-#3/3)*(1-\x/#4-\y/#4)+(#1+2*#3/3)*\y/#4+(#1-#3/3)*\x/#4};
            \pgfmathsetmacro\yy{(#2-#3/3)*(1-\x/#4-\y/#4)+(#2-#3/3)*\y/#4+(#2+2*#3/3)*\x/#4};
            \node[color=blue] at (\xx,\yy) {\LARGE $\bullet$};
          }
        }
      }

      \node at (2,15.5) {\color{blue} $\PP_{k+1}$};
      \draw[->] (3.5,15.5) -- (5.5,15.5) node [midway, above] {$\gradx$};
      \node at (7,15.5) {\color{blue}$\dN_{k+1} ^{\triangle}$};
      \draw[->] (8.5,15.5) -- (10.5,15.5) node [midway, above] {$\dcurlS$};
      \node at (12.5,15.5) {$  {\color{red} \dPP_k ( \cF )} \times {\color{blue} \dPP_{k} ( \cC)}$};
      \LagrangeTriangle{2}{12}{4}{1};
      \LagrangeTriangle{2}{7}{4}{2};
      \LagrangeTriangle{2}{2}{4}{3};
      \LagrangeTriangle{2}{-3}{4}{4};

      \def\NVectorDG#1#2#3#4{
        \draw[-,thick] (#1-4/3,#2-4/3) -- (#1+2*4/3,#2-4/3) -- (#1-4/3,#2+2*4/3) -- cycle;
        \pgfmathsetmacro\maxm{#4-1};
        \xifnum{ \maxm > 0 }{
          \foreach \x in {0,...,\maxm}{
            \pgfmathsetmacro\maxy{\maxm-\x};
            \foreach \y in {0,...,\maxy}{
              \pgfmathsetmacro\xx{(#1-#3/3)*(1-\x/\maxm-\y/\maxm)+(#1+2*#3/3)*\y/\maxm+(#1-#3/3)*\x/\maxm};
              \pgfmathsetmacro\yy{(#2-#3/3)*(1-\x/\maxm-\y/\maxm)+(#2-#3/3)*\y/\maxm+(#2+2*#3/3)*\x/\maxm};
              \draw[<->,color=blue,very thick] (\xx-0.2,\yy) -- (\xx+0.2,\yy);
              \draw[<->,color=blue,very thick] (\xx,\yy-0.2) -- (\xx,\yy+0.2);
            }
          }
        }
        {}
        \pgfmathsetmacro\facta{0.87};
        \pgfmathsetmacro\maxxa{#4+1};
        \foreach \xa in {1,...,\maxxa}{
          \pgfmathsetmacro\xxx{#1-4/3*\facta + 4*\xa/(\maxxa+1)*\facta};
          \pgfmathsetmacro\yya{#2-4/3*\facta};
          \draw[<->,color=blue,very thick] (\xxx-0.2,\yya) -- (\xxx+0.2,\yya);
          \pgfmathsetmacro\yyy{#2-4/3*\facta + 4*\xa/(\maxxa+1)*\facta};
          \pgfmathsetmacro\xxa{#1-4/3*\facta};
          \draw[<->,color=blue,very thick] (\xxa,\yyy-0.2) -- (\xxa,\yyy+0.2);
          \pgfmathsetmacro\xxxx{#1 + (2*4/3*\facta) - 4*\xa/(\maxxa+1)*\facta};
          \pgfmathsetmacro\yyyy{#2 -   (4/3*\facta) + 4*\xa/(\maxxa+1)*\facta};
          \pgfmathsetmacro\deltal{sqrt(2)/2*0.2};
          \draw[<->,color=blue,very thick] (\xxxx+\deltal,\yyyy-\deltal) -- (\xxxx-\deltal,\yyyy+\deltal);
        }
      }

      \NVectorDG{7}{12}{2.8}{0};
      \NVectorDG{7}{7}{2.8}{1};
      \draw[<->,color=blue,very thick] (7-0.2,7) -- (7+0.2,7);
      \draw[<->,color=blue,very thick] (7,7-0.2) -- (7,7+0.2);
      \NVectorDG{7}{2}{2.8}{2};
      \NVectorDG{7}{-3}{2.8}{3};

      \def\dFaceCellsTriangle#1#2#3#4{
        \draw[-,thick] (#1-4/3,#2-4/3) -- (#1+2*4/3,#2-4/3) -- (#1-4/3,#2+2*4/3) -- cycle;
        \pgfmathsetmacro\newvar{#4+1};
        \xifnum{ \newvar < 0}
                   {
                   }
                   {
                     \xifnum{ \newvar > 0}{
                       \foreach \x in {0,...,\newvar}{
                         \pgfmathsetmacro\maxy{\newvar-\x};
  
                         \foreach \y in {0,...,\maxy}{
                           \pgfmathsetmacro\xx{(#1-#3/3)*(1-\x/\newvar-\y/\newvar)+(#1+2*#3/3)*\y/\newvar+(#1-#3/3)*\x/\newvar};
                           \pgfmathsetmacro\yy{(#2-#3/3)*(1-\x/\newvar-\y/\newvar)+(#2-#3/3)*\y/\newvar+(#2+2*#3/3)*\x/\newvar};
                           \node[color=blue] at (\xx,\yy) {\LARGE $\bullet$};
                         }
                       }
                     }
                     {
                       \node[color=blue] at (#1,#2) {\LARGE $\bullet$};
                     }
                   }

                   \pgfmathsetmacro\maxxa{#4+2};
                   \foreach \xa in {1,...,\maxxa}{
                     \pgfmathsetmacro\xxx{#1-4/3 + 4*\xa/(\maxxa+1)};
                     \pgfmathsetmacro\yyy{#2-4/3 + 4*\xa/(\maxxa+1)};
                     \node[color=red] at (\xxx,#2-4/3) {\LARGE $\bullet$};
                     \node[color=red] at (#1-4/3,\yyy) {\LARGE $\bullet$};
                     \pgfmathsetmacro\xxxx{#1 + (2*4/3) - 4*\xa/(\maxxa+1)};
                     \pgfmathsetmacro\yyyy{#2 -   (4/3) + 4*\xa/(\maxxa+1)};
                     \node[color=red] at (\xxxx,\yyyy) {\LARGE $\bullet$};
                   }
                   
      }
      \dFaceCellsTriangle{12}{12}{2.8}{-1};
      \dFaceCellsTriangle{12}{7}{2.8}{0};
      \dFaceCellsTriangle{12}{2}{2.8}{1};
      \dFaceCellsTriangle{12}{-3}{2.8}{2};

    \end{tikzpicture}
  \end{center}
  \caption{\label{fig:deRhamTriCurlN}
    Representation of the finite element spaces involved in the
    grad/curl de-Rham complex for triangular
    meshes for $k=0,1,2$ and $3$.
    Same code for as in \autoref{fig:deRhamQuadDiv} is used.
  }
\end{figure}

In this section, results of \cite{licht2017complexes} for
discontinuous finite element spaces for vectors have been recalled to
ensure the \autoref{def:harmonicGap}. Still, as the spaces
$\dRT$ and $\dN$ are
obtained by relaxing the normal continuity constraint of the classical
conformal finite element spaces $\RT$ and $\bN$, their number of degrees of
freedom are not optimal. In the following sections, we will try to develop
vector finite element spaces with a lower number of degrees of freedom for
which the \autoref{def:harmonicGap} holds also, by beginning by the triangular
meshes case.

\section{The triangular mesh case}
\label{sec:triangularMesh}

In this section, we are interested in the following diagram
\begin{equation}
  \label{eq:DiagramTri}
  \PP_{k+1}
  \quad
  \diag{\nablaperp}
  \quad
  \bdPP_k ( \cC )
  \quad
  \diag{\ddivx}
  \quad
  \dPP_k ( \cF ) \times \dPP_{k-1} ( \cC).
\end{equation}

\subsection{Dimension of the finite elements spaces}
We first compute the dimension of each of the finite element spaces involved
in \eqref{eq:DiagramTri}, induced by the number
of faces, points and cells that was computed in \autoref{prop:NElementsTri}.

\begin{prop}[Dimension of the finite element spaces]
  \label{prop:dimFETri}
  If the mesh is triangular and periodic, then
  $$
  \left\{
  \begin{array}{r@{\, = \, }l}
    \dim \PP_{k+1} & \dfrac{N (k+1) ^2}{2}\\
    \dim \bdPP_k ( \cC ) & N (k+1)(k+2)\\
    \dim \left( \dPP_k ( \cF ) \times \dPP_{k-1} ( \cC) \right) &
    \dfrac{(k+1)(k+3) N}{2}.
    \\
  \end{array}
  \right.
  $$

\end{prop}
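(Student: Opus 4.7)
The proof is essentially a direct dimension count that mirrors the argument already given in the proof of \autoref{prop:nonconformalDimension}, combined with the enumeration formulas of \autoref{prop:NElementsTri}. My plan is to compute each of the three dimensions separately by decomposing a local basis into contributions from points, faces and cells and then aggregating over the mesh.

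For $\dim \PP_{k+1}$, the computation is identical to the one in \autoref{prop:nonconformalDimension}: the $\PP_{k+1}$ Lagrange element on a triangle distributes $1$ degree of freedom per mesh point, $k$ per face, and $k(k-1)/2$ interior to each cell, so that
\[
\dim \PP_{k+1} = \#\cP + k\,\#\cF + \tfrac{k(k-1)}{2}\,\#\cC.
\]
Plugging in $\#\cP = N/2$, $\#\cF = 3N/2$ and $\#\cC = N$ from \autoref{prop:NElementsTri} and simplifying gives $\tfrac{N}{2}(k^2+2k+1) = \tfrac{N(k+1)^2}{2}$.

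For $\dim \bdPP_k(\cC)$, since the space is discontinuous and vectorial, each cell contributes $2 \cdot \tfrac{(k+1)(k+2)}{2} = (k+1)(k+2)$ local degrees of freedom, and cells do not share any degrees of freedom. Multiplying by $\#\cC = N$ yields $N(k+1)(k+2)$.

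For the third dimension, I would note that $\dPP_k(\cF)$ contributes $(k+1)$ degrees of freedom per face (a one-dimensional polynomial of degree $k$), while $\dPP_{k-1}(\cC)$ contributes $\tfrac{k(k+1)}{2}$ degrees of freedom per cell. Using \autoref{prop:NElementsTri} this gives
\[
\dim\!\left(\dPP_k(\cF) \times \dPP_{k-1}(\cC)\right)
= (k+1)\,\tfrac{3N}{2} + \tfrac{k(k+1)}{2}\,N
= \tfrac{N(k+1)}{2}\,(3+k)
= \tfrac{(k+1)(k+3)N}{2}.
\]
There is no real obstacle in this proof: all three computations are bookkeeping, and the only ingredients are the standard cardinalities of Lagrange/discontinuous local bases together with the mesh counts of \autoref{prop:NElementsTri}.
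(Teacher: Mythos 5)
Your computation is correct and follows exactly the same route as the paper: the dimension of $\PP_{k+1}$ is delegated to the earlier count via points, faces and cells, and the two discontinuous spaces are counted cell-by-cell and face-by-face using \autoref{prop:NElementsTri}. Nothing is missing.
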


\begin{proof}
  The dimension of $\PP_{k+1}$ was already proven in
  \autoref{prop:nonconformalDimension}.
  We are now interested in the dimension of $\bdPP_k (\cC)$. This
  space is a vector space, and so is composed 
  of two components, each of these components having
  $\dfrac{(k+1)(k+2)}{2}$ degrees of freedom on each cell. This gives
  $$\dim \bdPP_k (\cC) = 2 N \times \dfrac{(k+1)(k+2)}{2}
  =N(k+1)(k+2).$$
  We are finally interested in the dimension of
  $\dPP_k (\cF) \times \dPP_{k-1} ( \cC  )$. This finite element space
  includes $k+1$ degrees of freedom on each face, and
  $\dfrac{k(k+1)}{2}$ degrees of freedom on each cell. Adding all these degrees
  of freedom leads to
  $$
  \begin{array}{r@{\, = \, }l}
    \dim \left( \dPP_k (\cF) \times \dPP_{k-1} ( \cC  ) \right)
    & (k+1) \# \cF + \dfrac{k(k+1)}{2} \, \# \cC\\
    & (k+1) \, \dfrac{3N}{2} + \dfrac{k(k+1)}{2} \, N \\
    \dim \left( \dPP_k (\cF) \times \dPP_{k-1} ( \cC  ) \right) 
    &
    \dfrac{(k+1)(k+3) N}{2} ,  \\
  \end{array}
  $$
  which ends the proof of this proposition. 
\end{proof}

\subsection{Study of $\nablaperp$}

We are now interested in the study of the $\nablaperp$ operator.
\begin{prop}[$\nablaperp$ in the triangular case]
  \label{prop:nablaPerpTri}
  We have
  $$
  \left\{
  \begin{array}{r@{\, = \, }l}
    \dim \left( \ker \nablaperp \right) & 1 \\
    \rank \left( \nablaperp \right)     & \dfrac{N (k+1) ^2}{2} - 1. \\
  \end{array}
  \right.
  $$
\end{prop}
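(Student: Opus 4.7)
The plan is to reduce the two claims to a single one via the rank--nullity theorem. Since \autoref{prop:dimFETri} gives $\dim \PP_{k+1} = \frac{N(k+1)^2}{2}$, once we show $\dim(\ker \nablaperp) = 1$, the rank identity $\rank(\nablaperp) = \frac{N(k+1)^2}{2} - 1$ follows for free. So the entire work consists in identifying the kernel.

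The kernel is computed by elementary means. First I would note that $\nablaperp$ is applied cellwise, so $\nablaperp p = 0$ in $\bdPP_k(\cC)$ if and only if $\partial_x p_{|c} = 0$ and $\partial_y p_{|c} = 0$ for every $c \in \cC$, which is equivalent to $p_{|c}$ being a constant function on each cell. Second, since $p \in \PP_{k+1}$ lies in the \emph{continuous} Lagrange space, the cell-constants must agree across every shared face, hence across every pair of cells sharing a vertex.

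Next I would invoke connectivity of the mesh: the triangulation of the torus $\T^2$ is a connected cell complex, so one can walk from any cell to any other through a sequence of face-adjacent cells, and the common cellwise constant value propagates along this walk. Consequently $p$ is a single global constant, i.e.\ $\ker \nablaperp = \KK$, and $\dim \ker \nablaperp = 1$. Combined with the rank--nullity argument above, this gives the two equalities announced in the proposition.

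The argument is essentially unobstructed; the only mild point to verify is the implicit claim that the mesh graph on cells is connected, which is immediate for any conforming triangulation of $\T^2$ and requires no further mesh assumption beyond those tacit throughout the section.
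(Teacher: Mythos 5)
Your proof is correct and follows essentially the same route as the paper: identify $\ker\nablaperp$ as the cellwise constants forced to a single global constant by continuity and connectedness of the mesh, then apply rank--nullity with the dimension from \autoref{prop:dimFETri}. The only difference is that you spell out the connectivity argument slightly more explicitly than the paper does.
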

\begin{proof}

  Suppose that a $\psi \in \PP_{k+1}$ is such that $\nablaperp \psi = 0$.
  Then $\partial_x \psi = \partial_y \psi = 0$, so that $\psi$ is
  piecewise constant. But as $\psi$ is continuous, it is actually uniform:
  $$\ker \nablaperp = \KK.$$
  This gives $\dim \ker \nablaperp = 1$, as we are working on a
  domain with a single connected component. Applying the rank-nullity theorem in
  the triangular case
  $$\dim \PP_k = \dim \ker \nablaperp + \rank \left( \nablaperp \right),$$
  leads to
  $$\rank \left( \nablaperp \right) = \dim \PP_k - 1
  = \dfrac{N (k+1) ^2}{2} - 1.$$
  In the same manner, we get, in the quadrangular case:
  $$\rank \left( \nablaperp \right) 
  = N (k+1) ^2 - 1.$$

\end{proof}

\subsection{Discrete divergence free polynomials on the reference cell}

We consider the following application
\begin{equation}
  \bC ^{\partial} _k \, : \, \psi \in \dPP_{k+1} (\hat{K}) \, \longmapsto \,
  \bTr \left( \nablaperp \psi \right) \in \dd \PP_k ( \partial \hat{K} ),
\end{equation}
\answerI{where $\hat{K}$ is the reference triangle.}

\begin{prop}
  \label{prop:CkTri}
  We denote by $\mathbbm{k}$ the constant elements of
  $\dd \PP_k ( \partial \hat{K} )$. Then 
  $$\dd \PP_k ( \partial \hat{K} ) = \Range \bC^{\partial} _k \oplus \mathbbm{k},$$
  where the sum is orthogonal. 
\end{prop}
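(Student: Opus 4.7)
The plan is to prove the two components of the decomposition separately: first the orthogonality of $\Range \bC^{\partial} _k$ with $\mathbbm{k}$, and then completeness via a dimension count based on the rank-nullity theorem applied to $\bC^{\partial} _k$.

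For the orthogonality, I would exploit the identity $\divx \circ \nablaperp = 0$. Given any $\psi \in \dd \PP_{k+1}(\hat K)$ and any $\mu \in \mathbbm{k}$, the divergence theorem then gives
\[
\Int{\partial \hat K}{} \mu \, \nablaperp \psi \cdot \bn \, \dd s
= \mu \Int{\hat K}{} \divx \left( \nablaperp \psi \right) \, \dd A = 0,
\]
so that $\Range \bC^{\partial} _k$ is orthogonal to $\mathbbm{k}$ in the $L^2(\partial \hat K)$ inner product, and in particular $\Range \bC^{\partial} _k + \mathbbm{k}$ is a direct sum.

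For completeness, the strategy is to determine $\dim \ker \bC^{\partial} _k$ and then apply the rank-nullity theorem. A polynomial $\psi \in \PP_{k+1}(\hat K)$ lies in $\ker \bC^{\partial} _k$ iff the tangential derivative $\nablaperp \psi \cdot \bn_f$ vanishes on every edge $f$ of $\hat K$, i.e., iff $\psi$ is constant on each of the three edges. Since $\psi$ is single-valued at each of the shared vertices, these three edge-constants must coincide, so $\psi \equiv c$ on $\partial \hat K$ for some $c \in \R$. The residual $\psi - c$ then vanishes on $\partial \hat K$, and the classical factorization (each barycentric coordinate $\lambda_i$ divides any polynomial vanishing on its associated edge) yields $\psi - c = \lambda_1 \lambda_2 \lambda_3 \, q$ with $q \in \PP_{k-2}(\hat K)$, under the convention $\PP_{k-2}(\hat K) = \{ 0 \}$ for $k \leq 1$. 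This gives $\dim \ker \bC^{\partial} _k = 1 + k(k-1)/2$.

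A final dimension count closes the argument: with $\dim \PP_{k+1}(\hat K) = (k+2)(k+3)/2$, the rank-nullity theorem gives $\rank \bC^{\partial} _k = 3(k+1) - 1$. Combined with $\dim \dd \PP_k(\partial \hat K) = 3(k+1)$ and $\dim \mathbbm{k} = 1$, the orthogonal sum $\Range \bC^{\partial} _k \oplus \mathbbm{k}$ already spans all of $\dd \PP_k(\partial \hat K)$, yielding the claimed decomposition. The only step requiring any care is the barycentric factorization of a polynomial of degree $k+1$ that vanishes on $\partial \hat K$ into $\lambda_1 \lambda_2 \lambda_3$ times a polynomial of degree $k-2$, but this is a standard result and does not constitute a real obstacle; the rest is routine bookkeeping.
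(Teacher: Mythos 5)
Your proof is correct and follows essentially the same route as the paper: orthogonality of $\Range \bC^{\partial}_k$ and $\mathbbm{k}$ via the divergence theorem and $\divx \circ \nablaperp = 0$, then the kernel dimension $1 + k(k-1)/2$, rank-nullity giving $\rank \bC^{\partial}_k = 3k+2$, and the final count against $\dim \dd \PP_k(\partial \hat{K}) = 3(k+1)$. The only (cosmetic) difference is that you justify the kernel dimension by the barycentric factorization $\psi - c = \lambda_1\lambda_2\lambda_3\, q$ with $q \in \PP_{k-2}(\hat{K})$, whereas the paper reads it off the Lagrange basis as the constant plus the $k(k-1)/2$ interior nodes; both give the same count.
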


\begin{proof}
  We denote by $c$ an element of $\mathbbm{k}$. We denote also by $c$ the
  function equal to $c$ on $\hat{K}$. We also denote by
  $u$ an element of $\Range \bC^{\partial}$. Then a $\psi$ exists such
  that $u = \bC^{\partial} _k\left( \psi \right)$. Then
  $$
  \begin{array}{r@{\, = \, }l}
    \Int{\partial \hat{K}}{}  u c  & 
    \Int{\partial \hat{K}}{} \bTr \left( \nablaperp \psi \right) c \\
    &  \Int{\hat{K}}{} \divx \left( c \nablaperp \psi \right)\\
    & \Int{\hat{K}}{} c \, \divx \left( \nablaperp \psi \right)
    +
    \Int{\hat{K}}{} \nabla c \cdot \nablaperp \psi \\
    \Int{\partial \hat{K}}{}  u c 
     & 0,
  \end{array}
  $$
  because $c$ is constant and $\divx \left( \nablaperp \psi \right) = 0$.
  We thus have proven that the sum
  $$\Range \bC^{\partial} _k + \mathbbm{k} , $$
  is direct and orthogonal.

  We are now interested in the study of the kernel of $\bC^{\partial} _k$.
  Suppose that an element $\psi$ is such that $\bC^{\partial} _k (\psi) = 0$.
  We consider the classical Lagrange basis of $\dPP_{k+1} (\hat{K})$.
  Then $\psi$
  is such that its value on the boundary
  of $\hat{K}$ is constant, and may take any value on the degrees of freedom
  matching with the interior nodes. This means that
  $$\dim \left( \ker \bC^{\partial} _k \right) = 1 + \dfrac{k(k-1)}{2}. $$
  Using the rank-nullity theorem gives
  $$
  \begin{array}{r@{\, = \, }l}
  \rank \left( \bC^{\partial} _k \right)
  &
  \dim \dPP_{k+1} - \left( 1 + \dfrac{k(k-1)}{2} \right) \\
  &
  \dfrac{(k+2)(k+3)}{2} -  1 - \dfrac{k(k-1)}{2}  \\
  &
  \dfrac{k^2 + 5k +6 - k^2 + k - 2}{2} \\
  &
  \dfrac{6k + 4}{2} \\
  \rank \left( \bC^{\partial} _k \right)
  & 3 k +2. \\
  \end{array}
  $$
  We also know that $\dim \dd \PP_k \left( \partial \hat{K} \right) = 3(k+1)$. We have
  then $\Range \bC^{\partial} _k \oplus \mathbbm{k}
  \subset \dd \PP_k \left( \partial \hat{K} \right)$, and
  $\dim \left( \Range \bC^{\partial} _k \oplus \mathbbm{k} \right)  =
  \dim \dd \PP_k \left( \partial \hat{K} \right)$, so that
  $\Range \bC^{\partial} _k \oplus \mathbbm{k}  =
  \dd \PP_k \left( \partial \hat{K} \right)$, which ends the proof. 
\end{proof}

\begin{prop}[Decomposition of divergence free elements]
  \label{prop:decompositionTri}
  We denote by $\mathscr{L}^{f,i}$  the Legendre polynomial of
  degree $i$
  on the face $f$ of $\hat{K}$, normalised such that
  $$\Int{\partial \hat{K}}{} \left( \mathscr{L}^{f,i} \right)^2 = 1.$$
  
  Suppose that $\bu \in \bdPP_k (\hat{K})$ is divergence free. Then
  $\bu$ can be uniquely decomposed as
  \begin{equation}
    \label{eq:DecompositionTri}
    \bu = {\bar \bv_{\bu}} ^0 + {\bar \bv_{\bu}} ^1
    + \Sum{f \in \cF (\hat{K})}{} \Sum{i=1}{k}
    {\bar \bv }_{\bu} ^{f,i},
  \end{equation}
  where
  \begin{itemize}
  \item $\answerI{\bar  \bv}_{\bu} ^0$ is in the set
    $$\Phi_k = \left\{
    \bu \in \bdPP_k \qquad \divx \bu = 0 \qquad \bTr(\bu) = 0
    \right\}.$$
  \item $\answerI{\bar  \bv}_{\bu} ^1$ is constant.
  \item ${\bar \bv }_{\bu} ^{f,i}$ is orthogonal to $\Phi_k$ and such that
    \begin{itemize}
    \item $ \divx {\bar \bv }_{\bu} ^{f,i} = 0$. 
    \item $\bTr \left( {\bar \bv }_{\bu} ^{f,i} \right)$ is orthogonal to all
      $\mathscr{L}^{g,j}$ for $\left\{g,j\right\} \neq \left\{f,i\right\}$.
    \end{itemize}
  \end{itemize}
\end{prop}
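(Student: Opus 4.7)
The plan is to work through the trace operator. Let $Z_k := \{\bu \in \bdPP_k(\hat{K}) : \divx \bu = 0\}$; by definition $\Phi_k = Z_k \cap \ker \bTr$. The idea is to split $Z_k$ orthogonally in $L^2(\hat{K})$ as $\Phi_k \oplus (Z_k \cap \Phi_k^{\perp})$, and then decompose the trace of the part in $Z_k \cap \Phi_k^{\perp}$ using the orthonormal Legendre basis of $\dPP_k(\partial\hat{K})$ together with \autoref{prop:CkTri}. The three types of components in the statement correspond respectively to the $L^2(\hat{K})$-projection onto $\Phi_k$, the face-constant (degree-zero Legendre) part of the trace, and the degree-$\geq 1$ Legendre modes of the trace on each face.

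First I would record two elementary observations. (i) For every $\boldsymbol{\phi} \in \Phi_k$ one has $\int_{\hat{K}}\boldsymbol{\phi} = \mathbf{0}$: writing $\phi_x = \divx(x\boldsymbol{\phi}) - x\,\divx\boldsymbol{\phi} = \divx(x\boldsymbol{\phi})$, the divergence theorem gives $\int_{\hat{K}}\phi_x = \int_{\partial\hat{K}} x\,(\boldsymbol{\phi}\cdot\bn) = 0$, and similarly for $\phi_y$. In particular every constant vector field is $L^2(\hat{K})$-orthogonal to $\Phi_k$. (ii) For every $\bu\in Z_k$, the divergence theorem gives $\int_{\partial\hat{K}}\bu\cdot\bn = 0$, so $\bTr(\bu)$ is orthogonal to the global constant of $\dPP_k(\partial\hat{K})$ and hence lies in $\Range\bC^{\partial}_k$ by \autoref{prop:CkTri}. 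Conversely, every element of $\Range\bC^{\partial}_k$ is the trace of some $\nablaperp\psi \in Z_k$, so $\bTr(Z_k) = \Range\bC^{\partial}_k$.

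Then I construct the components. Let $\bar\bv_\bu^0$ be the $L^2(\hat{K})$-orthogonal projection of $\bu$ onto $\Phi_k$; then $\tilde\bu := \bu - \bar\bv_\bu^0 \in Z_k\cap\Phi_k^{\perp}$ and $\bTr(\tilde\bu) = \bTr(\bu)$. Since $\ker(\bTr|_{Z_k}) = \Phi_k$, the restriction $\bTr:Z_k\cap\Phi_k^{\perp}\to\Range\bC^{\partial}_k$ is a bijection. Expand $\bTr(\bu) = \sum_{f,i}\alpha^{f,i}\mathscr{L}^{f,i}$ in the orthonormal basis $\{\mathscr{L}^{f,i}\}_{f\in\cF(\hat{K}),\,0\le i\le k}$ of $\dPP_k(\partial\hat{K})$, with $\alpha^{f,i} = \int_{\partial\hat{K}}\bTr(\bu)\,\mathscr{L}^{f,i}$. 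For each $(f,i)$ with $i\geq 1$, define $\bar\bv_\bu^{f,i}$ to be the unique preimage in $Z_k\cap\Phi_k^{\perp}$ of $\alpha^{f,i}\mathscr{L}^{f,i}$: it is divergence-free, orthogonal to $\Phi_k$, and its trace is by construction orthogonal to every $\mathscr{L}^{g,j}$ with $\{g,j\}\neq\{f,i\}$. Finally define $\bar\bv_\bu^1$ as the unique constant vector whose trace matches the face-constant part $\sum_f\alpha^{f,0}\mathscr{L}^{f,0}$ of $\bTr(\bu)$; by (i) it automatically lies in $Z_k\cap\Phi_k^{\perp}$, hence it is the trace-preimage of that face-constant part.

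The one step requiring a genuine argument is the existence and uniqueness of this constant vector. By (ii) the admissible face-constant coefficients $(\alpha^{f,0})_f$ form a $2$-dimensional hyperplane in $\R^3$, cut out by $\sum_f |f|^{1/2}\alpha^{f,0} = 0$. On the other hand, the linear map $\bv \in \R^2 \mapsto (|f|^{1/2}\,\bv\cdot\bn_f)_{f\in\cF(\hat{K})}\in\R^3$ sends $\R^2$ into the same hyperplane, because $\sum_f |f|\bn_f = \mathbf{0}$ on the closed polygon $\partial\hat{K}$; and it is injective since any two of the three edge normals of $\hat{K}$ are linearly independent. Hence it is an isomorphism onto this hyperplane, producing $\bar\bv_\bu^1$ uniquely. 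Uniqueness of the whole decomposition is inherited from the uniqueness of each ingredient (orthogonal projection, bijective trace inversion, and this last isomorphism), and the consistency check
\[
\tfrac{k(k-1)}{2} + 2 + 3k = \tfrac{(k+1)(k+4)}{2} = \dim Z_k,
\]
obtained from $Z_k = \nablaperp\dPP_{k+1}(\hat{K})$ (giving $\dim Z_k = \dim\dPP_{k+1}-1 = (k+1)(k+4)/2$) and $\dim\Phi_k = \dim Z_k - \dim\Range\bC^{\partial}_k = k(k-1)/2$, confirms that the three families together exhaust $Z_k$.
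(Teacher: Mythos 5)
Your proof is correct and follows the same skeleton as the paper's: orthogonal projection onto $\Phi_k$, the Legendre expansion of the normal trace, and \autoref{prop:CkTri} to invert the trace on divergence-free fields; your observation that $\bTr$ restricts to a bijection from $Z_k\cap\Phi_k^{\perp}$ onto $\Range \bC^{\partial}_k$ is exactly the mechanism the paper uses implicitly when it builds the lifts $\be^{f,i}$. Two sub-steps are handled genuinely differently. First, you prove that constants are orthogonal to $\Phi_k$ via $\int_{\hat{K}}\boldsymbol{\phi}=\mathbf{0}$ (writing $\phi_x=\divx(x\boldsymbol{\phi})$), whereas the paper goes through the stream function $\Psi^{\Phi}$ of an element of $\Phi_k$; both work, yours is slightly more elementary. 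Second, and more substantially, for the component $\bar{\bv}_{\bu}^1$ the paper defines it as a remainder and then verifies it is constant by an explicit computation on the reference triangle (recovering the trace on the hypotenuse from $\int_{\hat{K}}\divx\,\bar{\bv}_{\bu}^1=0$ and exhibiting a matching constant $\bk$ coordinate by coordinate), while you construct it directly as the image of the isomorphism $\bv\mapsto(|f|^{1/2}\,\bv\cdot\bn_f)_f$ onto the hyperplane $\sum_f|f|^{1/2}\alpha^{f,0}=0$, using the closed-polygon identity $\sum_f|f|\,\bn_f=\mathbf{0}$. Your version is coordinate-free and valid on an arbitrary straight triangle at once, which the paper only obtains afterwards by invoking invariance under affine maps; the paper's version is more concrete but tied to the reference element. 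Your closing dimension count is a useful consistency check but not needed for the argument.
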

Note that the sum over the integers $i$ in the sum of ${\bar \bv }_{\bu} ^{f,i}$
in \eqref{eq:DecompositionTri} begins at $1$ and not $0$ for excluding the
ones that would have a constant trace. 
Note also that the set $\Phi_k$ is the same as in the decomposition used
in \cite[Proposition 3.1]{brezzi2012mixed} for the derivation of 
classical conformal divergence free finite elements of Raviart-Thomas
\cite{raviart1977mixed,raviart1977primal}
or Brezzi-Douglas-Marini \cite{brezzi1985two} types (these are respectively
referred as $RT$ and $BDM$ in \cite{arnold2014periodic}).
\begin{proof}
  We first prove that ${\bar \bv}_{\bu} ^1$, if it exists, is orthogonal
  to $\Phi_k$. We denote by $\bu_{\Phi}$
  an element of $\Phi_k$. Then a $\psi^{\Phi}$ exists such that
  $\bu_{\Phi} = \nablaperp \Psi^{\Phi}$. As $\bu_{\Phi} \in \Phi_k$,
  $\Psi^{\Phi}$ is constant on $\partial \hat{K}$. Then
  $$
  \begin{array}{r@{\, = \, }l}
    \Int{\hat{K}}{} {\bar \bv}_{\bu} ^1 \cdot \bu_{\Phi} &
    \Int{\hat{K}}{} {\bar \bv}_{\bu} ^1 \cdot \nablaperp \Psi^{\Phi}  
    \\
    &
    \Int{\hat{K}}{} \left( {\bar \bv}_{\bu} ^1 \right)^\perp \cdot \nabla \Psi^{\Phi}  
    \\
    &
    \Int{\hat{K}}{} \divx \left( \left( {\bar \bv}_{\bu} ^1 \right)^\perp  \Psi^{\Phi}
    \right) 
    \\
    &
    \Int{\partial \hat{K}}{}  \bTr \left(
    \left( {\bar \bv}_{\bu} ^1\right)^\perp  \Psi^{\Phi} \right) 
    \\
    \Int{\hat{K}}{} {\bar \bv}_{\bu} ^1 \cdot \bu_{\Phi}
    & 0,
  \end{array}
  $$
  because both $\left( {\bar \bv}_{\bu} ^1\right)^\perp$ and $\Psi^{\Phi}$
  are constant on $\partial \hat{K}$.
  We have then proven that ${\bar \bv}_{\bu} ^1 \perp \Phi_k$. 
  
  We can now prove the uniqueness of the decomposition. We suppose that
  $$0  = {\bar \bv_{\bu}} ^0 + {\bar \bv_{\bu}} ^1
  + \Sum{f \in \cF (\hat{K})}{} \Sum{i=1}{k}
  {\bar \bv }_{\bu} ^{f,i},
  $$
  where the different components ensure the properties of the proposition. 
  By definition, ${\bar \bv_{\bu}} ^0$ is orthogonal to the
  ${\bar \bv }_{\bu} ^{f,i}$, and we proved that it is also orthogonal
  to ${\bar \bv}_{\bu} ^1$,
  so that it vanishes. We take the trace of the remaining part, which gives
  $$0  =  \bTr \left( {\bar \bv_{\bu}} ^1 \right)
  + \Sum{f \in \cF (\hat{K})}{} \Sum{i=1}{k}
  \bTr \left( {\bar \bv }_{\bu} ^{f,i} \right).
  $$
  Taking the scalar product by any $\mathscr{L}^{g,j}$  for $j \geq 1$ gives
  $$ \Int{\partial \hat{K} }{}
  \bTr \left( {\bar \bv }_{\bu} ^{g,j} \right)  \mathscr{L}^{g,j} = 0.$$
  As $\bTr \left( {\bar \bv }_{\bu} ^{g,j} \right)$ has a moment only on
  \answerI{face $g$ for degree $j$,}
  it is actually zero. As it is orthogonal to
  $\Phi_k$, we get ${\bar \bv }_{\bu} ^{g,j} = 0$ for all $g,j$. It remains then
  ${\bar \bv_{\bu}} ^1 = 0$, and so each component is zero, which proves
  the uniqueness of the decomposition.

  We now prove the existence. We consider one
  $\mathscr{L}^{f,i}$ of $\dd \PP_k \left( \partial \hat{K} \right)$, for $i \geq 1$.
  $\mathscr{L}^{f,i}$ is orthogonal to $\mathbbm{k}$,
  and so using using \autoref{prop:CkTri}, $\mathscr{L}^{f,i}$ is in
  $\Range \bC^{\partial} _k$, so that a   $\psi^{f,i}$ exists such that
  $\bC^{\partial} _k \left( \psi^{f,i} \right) = \mathscr{L}^{f,i}$. Denoting
  by $\mathscr{P}$ the orthogonal projection on $\Phi_k$, we define
  $\be^{f,i} := \nablaperp \left( \psi^{f,i} \right) -
  \mathscr{P} \left( \nablaperp \left( \psi^{f,i} \right) \right)$. Then
  $\be^{f,i}$ is orthogonal to $\Phi_k$, is divergence free, and is such
  that $\bTr\left( \be^{f,i} \right) = \mathscr{L}^{f,i}$. We consider one
  divergence free $\bu \in \bdPP_k (\hat{K})$. We define
  $$\lambda_{f,i} := \Int{\partial \hat{K}}{} \bTr(\bu) \mathscr{L}^{f,i},$$
  and set $ {\bar \bv }_{\bu} ^{f,i} = \lambda_{f,i} \be^{f,i}$. We also
  set ${\bar \bv_{\bu}} ^0 = \mathscr{P} ( \bu )$, and
  $$
  {\bar \bv_{\bu}} ^1 := \bu - {\bar \bv_{\bu}} ^0 -
  \Sum{f \in \cF (\hat{K})}{} \Sum{i=1}{k}
  {\bar \bv }_{\bu} ^{f,i}.
  $$
  Then ${\bar \bv_{\bu}} ^0$ and the ${\bar \bv }_{\bu} ^{f,i}$ ensure all the
  properties required. It remains to prove that ${\bar \bv_{\bu}} ^1$ is
  constant. We know that ${\bar \bv_{\bu}} ^1$ is divergence free, orthogonal
  to $\Phi_k$, and that
  its trace is constant on each face, because all the components
  in $\mathscr{L}^{f,i}$ for $i \geq 1$ were removed.
  We denote by $\bk_y$ the opposite of
  the trace on the side linking $[0,0]$ to $[0,1]$, and $\bk_x$ the opposite of
  the trace on the side linking $[0,0]$ to $[1,0]$, and consider
  $\bk =  (\bk_x,\bk_y)$. As $\divx {\bar \bv_{\bu}} ^1 = 0$, we have
  $$
  \begin{array}{r@{\, = \, }l}
  0 & \Int{\hat{K}}{} \divx {\bar \bv_{\bu}} ^1 \\
   & \Int{\partial \hat{K}}{} \bTr \left( {\bar \bv_{\bu}} ^1 \right) \\
  & \Int{(0,0)}{(1,0)} \bTr \left( {\bar \bv_{\bu}} ^1 \right)
  + \Int{(1,0)}{(1,1)} \bTr \left( {\bar \bv_{\bu}} ^1 \right)
  +\Int{(1,1)}{(0,0)} \bTr \left( {\bar \bv_{\bu}} ^1 \right) \\
  & - \bk_x - \bk_y + \Int{(1,0)}{(1,1)} \bTr \left( {\bar \bv_{\bu}} ^1 \right)
  \\
  0
  & - \bk_x - \bk_y + \sqrt{2}
  \bTr \left( {\bar \bv_{\bu}} ^1 \right)_{|[(1,0),(1,1)]}
  \\
  \end{array}
  ,$$
  which means that the trace on the side linking $[1,0]$ to $[1,1]$ is
  equal to
  $\dfrac{1}{\sqrt{2}} \, \left( \bk_x + \bk_y\right),$
  which is also equal to the trace of $\bk$. This means
  ${\bar \bv_{\bu}} ^1
  - \bk$ is divergence free, and 
  that $\bTr \left({\bar \bv_{\bu}} ^1
  - \bk\right) = 0$, and so
  ${\bar \bv_{\bu}} ^1
  - \bk \in \Phi_k$. 

  As $\bk$ is orthogonal to $\Phi_k$, and so is ${\bar \bv_{\bu}} ^1$, 
  we conclude that ${\bar \bv_{\bu}} ^1
  - \bk$ is also  orthogonal to $\Phi_k$.
  This gives ${\bar \bv_{\bu}} ^1
  - \bk = 0$, and ${\bar \bv_{\bu}} ^1$ is therefore constant. 
\end{proof}

It is important to note that the decomposition of
\autoref{prop:decompositionTri} was proven on the reference element. However,
all the properties of the different spaces are invariant by linear
transformations. This means that the
decomposition of \autoref{prop:decompositionTri} holds actually on any straight
triangular cell of the mesh. 

\subsection{$\ker \left( \ddivx \right)$ and $\Range \left( \nablaperp
  \right)$}

\begin{prop}
  \label{prop:kerRankTri}

  If $\bKK$ denotes the set of uniform vectors, then
  $$ \ker \left( \ddivx \right)
  =
  \Range \left( \nablaperp \right)
  \oplus \bKK.$$
\end{prop}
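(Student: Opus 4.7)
The plan is to verify the inclusion $\Range \left( \nablaperp \right) \oplus \bKK \subset \ker \left( \ddivx \right)$ directly and then obtain equality by a dimension count. The inclusion itself is immediate: for $\psi \in \PP_{k+1}$ the cellwise divergence of $\nablaperp\psi$ vanishes, while its normal trace on a face $f$ equals the tangential derivative of $\psi$ along $f$, which is continuous across $f$ by continuity of $\psi$. Constants in $\bKK$ evidently have zero cellwise divergence and zero jumps. The sum is direct because $\nablaperp\psi = \bc$ with $\bc$ constant forces $\psi$ to be affine with prescribed gradient, and global continuity on the torus then forces $\psi$ to be constant, so $\bc = 0$.

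Using \autoref{prop:nablaPerpTri}, the left-hand side has dimension $\dfrac{N(k+1)^2}{2} + 1$. Combined with \autoref{prop:dimFETri} and the rank--nullity theorem, the claimed identity is equivalent to showing that $\rank \left( \ddivx \right) = \dfrac{N(k+1)(k+3)}{2} - 1$, i.e.\ that the cokernel of $\ddivx$ inside $\dPP_k ( \cF ) \times \dPP_{k-1} ( \cC )$ is one-dimensional, spanned by a globally constant pair.

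To compute this cokernel, I would take $(p_f, p_c)$ orthogonal to $\Range \left( \ddivx \right)$ for the pairing \eqref{eq:scalarProduct}. Cellwise integration by parts, followed by localisation of the test function to a single cell $c_0$ (legal because $\bdPP_k(\cC)$ is fully discontinuous), reduces the orthogonality condition to
$$\int_{c_0} \divx(\bu)\, p_c \;=\; \int_{\partial c_0} (\bu \cdot \bn_{c_0}) \, p_f \qquad \forall \bu \in \bdPP_k (c_0).$$
Testing first on divergence-free $\bu$: by \autoref{prop:decompositionTri}, the constant component ${\bar \bv}^1$ (contributing $2$ degrees of freedom) and the face components ${\bar \bv}^{f,i}$ for $i = 1, \ldots, k$ (contributing $3k$ more) show that the normal trace of divergence-free fields in $\bdPP_k (c_0)$ spans exactly the $(3k+2)$-dimensional zero-mean subspace of $\dPP_k ( \partial c_0 )$. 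Hence $p_f$ must be constant on $\partial c_0$; connectivity of the mesh through shared faces propagates this to a global constant $p_f \equiv K$. Substituting back and using the surjectivity of $\divx : \bdPP_k (c_0) \to \dPP_{k-1} (c_0)$, one obtains $p_c = K$ on every cell, so the cokernel is one-dimensional.

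The delicate step is the characterisation of the normal traces of divergence-free elements of $\bdPP_k(c_0)$, which relies essentially on \autoref{prop:decompositionTri}; once this is in hand, the remainder is a routine dimension match.
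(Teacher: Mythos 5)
Your proof is correct, but it reaches the conclusion by a genuinely different route from the paper. The paper computes $\dim \ker \left( \ddivx \right)$ head-on: it applies the decomposition of \autoref{prop:decompositionTri} on every cell and counts how the jump constraints act on each type of component, which requires citing earlier work for the dimension of the piecewise-constant divergence-free space $\nablaperp \PP_1 \oplus \bKK$ and asserting that the $k \, \# \cF$ face-moment constraints are independent (``free''). You instead bound the corank of $\ddivx$: localising the test field to a single cell (legitimate since $\bdPP_k ( \cC )$ is a product over cells) and using \autoref{prop:CkTri}--\autoref{prop:decompositionTri} only to identify the normal traces of local divergence-free fields with the zero-mean subspace of $\dd \PP_k ( \partial c_0 )$, you force any element of the cokernel to be a global constant pair; rank--nullity together with the inclusion $\Range \left( \nablaperp \right) \oplus \bKK \subset \ker \left( \ddivx \right)$ (which already forces the corank to be at least one) then closes the count. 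Your route avoids both the external citation and the independence-of-constraints assertion, and it essentially proves \autoref{prop:RankDivxTri} en passant, reversing the paper's logical order, which derives the range characterisation from the kernel one; what it gives up is the explicit component-by-component description of $\ker \left( \ddivx \right)$. A small further difference: for directness of the sum you argue that an affine potential on the torus is constant, whereas the paper proves the stronger statement $\Range \left( \nablaperp \right) \perp \bKK$, which it reuses later; either suffices here.
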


\begin{proof}
  We begin by proving that $\Range \left( \nablaperp \right) \subset
  \ker \left( \ddivx \right)$. We consider an element $\bu$ of
  $\Range \left( \nablaperp \right)$. Then a $\Psi \in \PP_{k+1}$ exists
  such that $\bu = \nablaperp \psi$. Then
  $$\forall c \in \cC  \qquad \divx \left( \nablaperp \psi \right) = 0.$$
  Also, as $\psi$ is continuous, the jump of $\nablaperp \psi$ across
  faces vanishes. We have then proven that 
  $\Range \left( \nablaperp \right) \subset
  \ker \left( \ddivx \right)$.
  
  We now prove that $\Range \left( \nablaperp \right)
  \perp \bKK$. 
  We denote by $\bk = (\bk_x, \bk_y)^T \in \bKK$, and by $\psi$ an
  element of $\PP_k$. Then
  $$\bk \cdot \nablaperp \psi
  =
  \left(
  \begin{array}{c}
    \bk_x \\
    \bk_y
  \end{array}
  \right)
  \cdot
  \left(
  \begin{array}{c}
    - \partial_y \psi \\
    \partial_x \psi
  \end{array}
  \right)
  = - \bk_x \partial_y \psi + \bk_y \partial_x \psi
  =
  \left(
  \begin{array}{c}
    \bk_y \\
    - \bk_x 
  \end{array}
  \right)
  \cdot \nabla \psi.
  $$
  We denote by $\bkperp = \left(\bk_y , -\bk_x \right)^T$. Then as $\bkperp$
  is uniform, we have on all cells:
  $$\divx \left( \bkperp \psi \right) = \bkperp \cdot \nabla \psi.$$
  Then
  $$
  \begin{array}{r@{\, = \, }l}
    \Sum{c \in \cC}{} \Int{c}{} \bkperp \cdot \nabla \psi &
    \Sum{c \in \cC}{} \Int{c}{} \divx \left( \bkperp \psi \right) \\
    & \Sum{c \in \cC}{} \Int{\partial c}{} \psi \bkperp\cdot \bn_{out} \\
    & \Sum{c \in \cC}{} \Sum{f \in \cF (c)}{}
    \Int{f}{} \psi \bkperp\cdot \bn_{out} \\
    & -\Sum{f}{} \Int{f}{} \Jump{ \psi \bkperp\cdot \bn_f} \\
    \Sum{c \in \cC}{} \Int{c}{} \bkperp \cdot \nabla \psi
    & 0, \\
  \end{array}
  $$
  because both $\bkperp$ and $\psi$ are continuous across the faces. We have
  then proven that
  $$\bKK \perp \Range \left( \nablaperp \right).$$ 
  We also remark that
  $ \bKK \subset \ker \left( \ddivx \right)$, because $\ddivx$ is
  a derivation operator. For the moment, we have proven that
  $$
  \Range \left( \nablaperp \right)
  \oplus \bKK \subset \ker \ddivx.
  $$

  Suppose now that an element $\bu \in \bdPP_k$ is such that
  its divergence is $0$, namely
  $$
  \left\{
  \begin{array}{r}
    \forall c \in \cC \qquad \divx \bu = 0 \\
    \forall f \in \cF \qquad \Jump{\bu \cdot \bn_f } = 0. \\
  \end{array}
  \right.
  $$
  As $\divx \bu = 0$ on all the cells, $\bu$ can be decomposed as
  in \autoref{prop:decompositionTri}; this decomposition involves three types
  of components:
  \begin{itemize}
  \item The ones of $\Phi_k$, which have a trace equal to $0$. This
    set is of dimension $\dfrac{k(k-1)}{2}$ on each cell. 
  \item The constant component; this set is of dimension $2$ on each cell. 
  \item The components ${\bar \bv}_i ^f$ for $1 \leq i \leq k$ and for all
    faces. This set is of dimension $3k$ on each cell. 
  \end{itemize}
  Let us see now what is the effect of the constraint
  $\Jump{\bu \cdot \bn_f} = 0$ on theses different components.
  We first remark that the traces of the
  different components are orthogonal two at a time, which means that we can
  consider the effect of $\Jump{\bu \cdot \bn_f} = 0$ component by component:
  \begin{itemize}
  \item The ones of $\Phi_k$ are not affected by the zero jump constraint,
    because their trace is already 
    equal to $0$. This induces $N \, \dfrac{k(k-1)}{2}$ components
    in $\bdPP_k$. 
  \item The piecewise constant components, with the constraint
    $\Jump{\bu \cdot \bn_f} = 0$ is a set that
    was already identified  in previous
    publications \cite{Dellacherie2016_all_Mach,Arnold_1989,jung2022steady},
    and this space is $\nablaperp \PP_1 \oplus \bKK$, which is of dimension
    $1+\dfrac{N}{2}$.
  \item Concerning the components ${\bar \bv}_i ^f$ for $ 1 \leq i \leq k$,
    the constraint $\Jump{\bu \cdot \bn_f} = 0$ is inducing
    $k \#\cF$ free constraints on a space of dimension $3kN$. This induces a
    space of dimension
    $$3 k N - k \#\cF = 3 k N - k \, \dfrac{3N}{2} =
    \dfrac{3kN}{2}.$$
  \end{itemize}
  Adding the dimension of theses different sets gives the dimension of
  $\ker \ddivx$:
  $$
  \begin{array}{r@{\, = \, }l}
  \dim \left( \ker \ddivx \right)
  & N \, \dfrac{k(k-1)}{2} + 1+\dfrac{N}{2}
  + \dfrac{3kN}{2} \\
  & \dfrac{N}{2}\, \left( k^2 - k + 1 + 3 k\right) + 1 \\
  & \dfrac{N(k+1)^2}{2} + 1, \\
  \end{array}
  $$
  which is exactly equal to $\rank \left( \nablaperp \right) + \dim \bKK$. We have then
  proven that $ \Range \left(  \nablaperp \right) \oplus  \bKK \subset \ker \ddivx$ and that the
  dimensions are equal, so that
  $ \Range \left( \nablaperp \right) \oplus  \bKK = \ker \ddivx$. 
\end{proof}

\subsection{Study of $\ddivx$}

The kernel of $\ddivx$ was already characterised in \autoref{prop:kerRankTri}.
We now characterise its range. 

\begin{prop}[Range of $\ddivx$]
  \label{prop:RankDivxTri}
  We have
  $$
  \dPP_{k-1} \left( \cC \right) \times
  \dPP_{k} \left( \cF \right)
  = \Range \left( \ddivx \right) \oplus \KK,
  $$
  where the sum is orthogonal for the scalar product defined
  in \eqref{eq:scalarProduct}. 
\end{prop}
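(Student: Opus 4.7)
The plan is to follow the classical ``direct sum via orthogonality plus matching dimensions'' strategy, relying on the results already accumulated in this section. First I would establish $\Range (\ddivx) \perp \KK$ with respect to the scalar product \eqref{eq:scalarProduct}. Second I would compute $\rank (\ddivx)$ by rank-nullity applied to $\ddivx \, : \, \bdPP_k (\cC) \longrightarrow \dPP_{k-1} (\cC) \times \dPP_k (\cF)$ and verify that $\rank (\ddivx) + \dim \KK$ equals the dimension of the target. Together, these two facts give the announced orthogonal decomposition.

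For the orthogonality, let $\bu \in \bdPP_k (\cC)$ and let $q \in \KK$, viewed as the constant function $q$ on both the cell and face components of the target. Then
$$\scalar{\ddivx \bu}{q}_{[\dPP_{k-1}(\cC) \times \dPP_k(\cF)]} = q \left[ \Sum{c \in \cC}{} \Int{c}{} \divx \bu_{|c} + \Sum{f \in \cF}{} \Int{f}{} \Jump{\bu \cdot \bn_f} \right].$$
Applying Green's formula cell by cell gives $\Int{c}{} \divx \bu_{|c} = \Int{\partial c}{} \bu_{|c} \cdot \bn_{out,c}$. Rewriting the cell sum as a sum over faces and using that, for each face, the outward normals of the two neighbouring cells are opposite, one obtains $\Sum{c \in \cC}{} \Int{c}{} \divx \bu = - \Sum{f \in \cF}{} \Int{f}{} \Jump{\bu \cdot \bn_f}$, so that the two sums in the bracket cancel.

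For the dimension count, \autoref{prop:dimFETri} yields $\dim \bdPP_k (\cC) = N(k+1)(k+2)$ and $\dim ( \dPP_{k-1}(\cC) \times \dPP_k(\cF) ) = N(k+1)(k+3)/2$. Combining \autoref{prop:nablaPerpTri} with \autoref{prop:kerRankTri} gives $\dim \ker (\ddivx) = \rank (\nablaperp) + \dim \bKK = N(k+1)^2/2 + 1$. Rank-nullity then provides $\rank (\ddivx) = N(k+1)(k+3)/2 - 1$, whence $\rank(\ddivx) + \dim \KK = \dim ( \dPP_{k-1}(\cC) \times \dPP_k(\cF) )$. The orthogonality forces the sum $\Range (\ddivx) + \KK$ to be direct, and the matching dimension forces it to fill the ambient space.

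The main obstacle is essentially cosmetic: care is needed with the orientation conventions so that the integration-by-parts identity carries the correct sign, and with the corner case $k = 0$ where $\dPP_{-1}(\cC) = \{0\}$ and $\KK$ is realised as constants on the face component only. Beyond these small bookkeeping points, the proof is a direct assembly of previously established identities.
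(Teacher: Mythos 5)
Your proof is correct and follows essentially the same route as the paper: the orthogonality $\Range(\ddivx)\perp\KK$ is obtained by the same cell-by-cell Green's formula and telescoping over faces, and the rank count via rank-nullity together with $\dim\ker(\ddivx)=\rank(\nablaperp)+\dim\bKK$ from \autoref{prop:kerRankTri} and \autoref{prop:nablaPerpTri} is exactly the paper's computation. The only difference is the order of the two steps, which is immaterial.
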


\begin{proof}
  Following \autoref{prop:kerRankTri} and \autoref{prop:nablaPerpTri} we have
  $$\dim \left( \ker  \ddivx \right) = \rank \left( \nablaperp \right) + 2
  = \dfrac{N (k+1)^2}{2} - 1 + 2 = \dfrac{N (k+1)^2}{2} + 1.$$
  Using the rank nullity theorem gives
  $$
  \dim \left( \bdPP_k \right) = \rank \left( \ddivx \right) +
  \dim \left( \ker \ddivx \right).
  $$
  Using \autoref{prop:dimFETri} leads to
  $$
  \begin{array}{r@{\, = \, }l}
    \rank \left( \ddivx \right) & \dim  \bdPP_k  -
    \dim \left( \ker \ddivx \right) \\
    & N (k+1)(k+2) - \left( \dfrac{N (k+1)^2}{2} + 1 \right) \\
    & \dfrac{N(k+1)(2(k+2) - (k+1))}{2} - 1 \\ 
    & \dfrac{N(k+1)(2k + 4 - k - 1)}{2} - 1 \\
    \rank \left( \ddivx \right)
    & \dfrac{N(k+1)(k + 3)}{2} - 1. \\
  \end{array}
  $$
  We prove now that $\KK$ is orthogonal to
  $\Range \left( \ddivx \right)$. 
  We denote by $k$ an element of $\dPP_{k-1} \left( \cC \right) \times
  \dPP_{k} \left( \cF \right)$, which has the same value on all the cells and
  faces. We also denote by $k$ this value. We denote by $\bu$ an element of
  $\dPP_{k} \left( \cC \right)$. Then
  $$
  \begin{array}{r@{\, = \, }l}
    \scalar{\ddivx \bu}{k}_{[\dPP_{k-1} ( \cC) \times \dPP_k ( \cF)]} &
    \Sum{c \in \cC}{}
    \Int{c}{} k \, \divx \bu
    +
    \Sum{f \in \cF}{}
    \Int{f}{} k \, \Jump{\bu \cdot \bn_f} 
    \\
    &
    \Sum{c \in \cC}{}
    \Int{c}{} \divx \left( k \bu \right)
    +
    \Sum{f \in \cF}{}
    \Int{f}{} k \, \Jump{\bu \cdot \bn_f} 
    \\
    &
    \Sum{c \in \cC}{}
    \Int{\partial c}{} k \bu \cdot \bn_{out}
    +
    \Sum{f \in \cF}{}
    \Int{f}{} k \, \Jump{\bu \cdot \bn_f} 
    \\
    &
    \Sum{c \in \cC}{}
    \Sum{f \in \cF(c)}{}
    \Int{f}{} k \bu \cdot \bn_{out}
    +
    \Sum{f \in \cF}{}
    \Int{f}{} k \, \Jump{\bu \cdot \bn_f} 
    \\
    &
    \Sum{f \in \cF}{}
    \Int{f}{} \left( k \bu_L - k \bu_R  \right)\cdot \bn_{f}
    +
    \Sum{f \in \cF}{}
    \Int{f}{} k \, \Jump{\bu \cdot \bn_f} 
    \\
    &
    -\Sum{f \in \cF}{}
    \Int{f}{} k \Jump{\bu \cdot \bn_{f}}
    +
    \Sum{f \in \cF}{}
    \Int{f}{} k \, \Jump{\bu \cdot \bn_f} 
    \\
    \scalar{\ddivx \bu}{k}_{[\dPP_{k-1} ( \cC) \times \dPP_k ( \cF)]}
    & 0.
  \end{array}
  $$
  We thus have proven that 
  $\Range \left( \ddivx \right) \perp \KK$. As
  $\rank \left( \ddivx \right) =
  \dim \left( \dPP_{k-1} ( \cC) \times \dPP_k ( \cF)\right) - 1$, this actually
  means that
  $$\dPP_{k-1} ( \cC) \times \dPP_k ( \cF) =
  \Range \left( \ddivx \right) \oplus \KK,
  $$
  which ends the proof. 
\end{proof}

\subsection{Summary on the de-Rham complex}

Gathering all the results of this section, the following proposition
was proven
\begin{prop}
\label{prop:SummaryTriangleCurlDiv}
The discrete diagram
$$
\PP_{k+1}
\quad
\diag{\nablaperp}
\quad
\bdPP_k ( \cC )
\quad
\diag{\ddivx}
\quad
\dPP_k ( \cF ) \times \dPP_{k-1} ( \cC),
$$
where 
$\ddivx$ is the $\divx$ in the sense of distributions, 
ensures the \autoref{def:harmonicGap}. Moreover
$$
\left\{
\begin{array}{r@{\, = \, }l}
  \PP_{k+1} / \KK & \ker \left( \nablaperp \right)  \\
  \left( \dPP_k ( \cF ) \times \dPP_{k-1} ( \cC) \right) / \mathbbm{k} &
  \Range \left( \ddivx \right).
\end{array}
\right. 
$$
\end{prop}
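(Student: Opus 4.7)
The plan is straightforward: this summary proposition is essentially a direct corollary of the three preceding results in this section, and the proof amounts to collecting and labelling them. I would organize the presentation in three short steps, one for each of the three cohomology spaces of the discrete complex.

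First, I would cite \autoref{prop:nablaPerpTri} to identify $\ker(\nablaperp) = \KK$ (the one-dimensional space of globally constant elements of $\PP_{k+1}$), which matches the continuous zeroth cohomology of the torus since $b_0 = 1$. Second, I would invoke \autoref{prop:kerRankTri}, which establishes both the subcomplex property $\Range(\nablaperp) \subset \ker(\ddivx)$ and the orthogonal decomposition $\ker(\ddivx) = \Range(\nablaperp) \oplus \bKK$. Passing to the quotient gives $\ker(\ddivx)/\Range(\nablaperp) \cong \bKK$, a two-dimensional space of constant vectors matching $b_1 = 2$. Third, I would apply \autoref{prop:RankDivxTri}, which states $\dPP_{k-1}(\cC) \times \dPP_k(\cF) = \Range(\ddivx) \oplus \KK$; quotienting yields $[\dPP_{k-1}(\cC) \times \dPP_k(\cF)]/\Range(\ddivx) \cong \KK$, a one-dimensional space matching $b_2 = 1$.

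Since the discrete cohomology spaces at all three degrees coincide with the continuous ones (both in dimension and, through the constant proxies, structurally), the harmonic gap condition of \autoref{def:harmonicGap} is established. The identifications displayed in the proposition then follow immediately from rank-nullity together with the decompositions just cited.

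There is no genuine obstacle remaining at this point: all the real content, namely the inclusion $\Range(\nablaperp) \subset \ker(\ddivx)$, the orthogonality computations for the scalar product defined in \eqref{eq:scalarProduct}, the decomposition of divergence-free elements via \autoref{prop:decompositionTri}, and the dimension counting based on \autoref{prop:NElementsTri} and \autoref{prop:dimFETri}, has already been carried out in the preceding propositions. I would therefore present the proof in a compact form, essentially as a three-line citation of \autoref{prop:nablaPerpTri}, \autoref{prop:kerRankTri}, and \autoref{prop:RankDivxTri}.
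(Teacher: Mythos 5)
Your proposal is correct and matches the paper exactly: the paper offers no separate argument for this proposition, stating only that it is obtained by ``gathering all the results of this section,'' i.e.\ by combining \autoref{prop:nablaPerpTri}, \autoref{prop:kerRankTri} and \autoref{prop:RankDivxTri} precisely as you do. Your three-step assembly, identifying the three discrete cohomology spaces with $\KK$, $\bKK$ and $\KK$ respectively and matching them to $b_0=1$, $b_1=2$, $b_2=1$, is the intended proof.
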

By changing the representation of the linear forms, the following
proposition is also obtained:
\begin{prop}
\label{prop:SummaryTriangleGradCurl}
The discrete diagram
$$
\PP_{k+1}
\quad
\diag{\gradx}
\quad
\bdPP_k ( \cC )
\quad
\diag{\dcurlS}
\quad
\dPP_k ( \cF ) \times \dPP_{k-1} ( \cC),
$$
where 
$\dcurlS$ is $\curlS$ in the sense of distributions, 
ensures the \autoref{def:harmonicGap}. Moreover
$$
\left\{
\begin{array}{r@{\, = \, }l}
  \PP_{k+1} / \KK & \ker \left( \gradx \right)  \\
  \left( \dPP_k ( \cF ) \times \dPP_{k-1} ( \cC) \right) / \mathbbm{k} &
  \Range \left( \dcurlS \right).
\end{array}
\right. 
$$
\end{prop}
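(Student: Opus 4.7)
The plan is to deduce this proposition directly from \autoref{prop:SummaryTriangleCurlDiv} by exploiting the pointwise rotation by $\pi/2$ that relates the grad/curl and curl/div proxies. Concretely, I would introduce the linear isomorphism $R : \bdPP_k(\cC) \to \bdPP_k(\cC)$ defined cellwise by $R(\bu) := (-u_y, u_x)^T$. The map $R$ is a bijection (with $R^{-1} = -R$), it acts locally on each cell so it preserves $\bdPP_k(\cC)$, and it stabilises the subspace $\bKK$ of uniform vectors since a constant vector remains constant after rotation by $\pi/2$.

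Next I would verify two intertwining identities on the proxies. The first, $\nablaperp \psi = R(\gradx \psi)$ for every $\psi \in \PP_{k+1}$, is immediate from the definitions of $\gradx$ and $\nablaperp$. The second, $\ddivx(R(\bu)) = -\dcurlS(\bu)$ for every $\bu \in \bdPP_k(\cC)$, is checked componentwise: on each cell $c$ one has $\divx(-u_y, u_x) = -\partial_x u_y + \partial_y u_x = -\curlS \bu$, and on each face $f$ one has $\Jump{R(\bu)\cdot\bn_f} = \Jump{u_x n_{f,y} - u_y n_{f,x}}$, which is exactly the opposite of the face contribution of the distributional $\dcurlS$ obtained through integration by parts.

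These two identities express that the grad/curl diagram and the curl/div diagram of \autoref{prop:SummaryTriangleCurlDiv} are isomorphic as complexes, via the triple of maps $(\Id_{\PP_{k+1}}, R, -\Id)$. Transporting the conclusions of \autoref{prop:SummaryTriangleCurlDiv} along this isomorphism then gives: $\ker(\gradx) = \ker(\nablaperp) = \KK$ by \autoref{prop:nablaPerpTri}; $\ker(\dcurlS) = R^{-1}(\ker(\ddivx)) = R^{-1}(\Range(\nablaperp) \oplus \bKK) = \Range(\gradx) \oplus \bKK$ by \autoref{prop:kerRankTri}, using $R^{-1}(\bKK) = \bKK$ and $R^{-1}(\Range(\nablaperp)) = \Range(\gradx)$ (the latter follows from $\nablaperp = R \circ \gradx$); and $\Range(\dcurlS) = \Range(\ddivx)$ is orthogonally complemented in $\dPP_k(\cF) \times \dPP_{k-1}(\cC)$ by $\mathbbm{k}$ thanks to \autoref{prop:RankDivxTri}. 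The harmonic gap of \autoref{def:harmonicGap} then follows verbatim.

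No substantive obstacle arises, since all the analytic content has already been carried out in the curl/div case and only an algebraic transport remains. The only mild point to be careful about is the sign and orientation convention of the distributional curl on the faces, which is handled by the short calculation sketched above.
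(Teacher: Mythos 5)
Your proposal is correct and follows essentially the same route as the paper, which obtains \autoref{prop:SummaryTriangleGradCurl} from \autoref{prop:SummaryTriangleCurlDiv} precisely ``by changing the representation of the linear forms,'' i.e.\ by the $\pi/2$ rotation intertwining $\gradx$ with $\nablaperp$ and $\dcurlS$ with $\ddivx$. Your write-up simply makes explicit the intertwining identities and the transport of kernels and ranges that the paper leaves implicit, and the sign bookkeeping you carry out ($R^{-1}=-R$, $\ddivx\circ R=-\dcurlS$) is consistent with the paper's conventions.
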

The location of the degrees of freedom for the two discrete de-Rham
complexes found for triangles are summarised in \autoref{fig:deRhamTriDiv}. 
Note that compared with the finite element spaces
$\dRT_{k+1} ^{\triangle}$ and $\dN_{k+1} ^{\triangle}$
discussed in \autoref{sec:dRT}, the space
$\bdPP_k$ represents a significant improvement regarding the number
of degrees of freedom, as
$$\dim \dRT_{k+1} ^{\triangle} - \bdPP_k
= \dN_{k+1} ^{\triangle} - \bdPP_k
= k+1.$$
Also, Raviart-Thomas and N\'ed\'elec finite element basis are known to be difficult
to generate on simplices, whereas the generation of a basis for
$\bdPP_k$ is straightforward. Therefore, using the basis
$\bdPP_k$ instead of $\dRT_{k+1}$ or $\dN_{k+1}$ for discontinuous approximations
seems to be very beneficial. 

\begin{figure}
  \begin{center}
    \begin{tikzpicture}[scale=0.9]
      \def\LagrangeTriangle#1#2#3#4{
        \draw[-,thick] (#1-#3/3,#2-#3/3) -- (#1+2*#3/3,#2-#3/3) -- (#1-#3/3,#2+2*#3/3) -- cycle;
        \foreach \x in {0,...,#4}{
          \pgfmathsetmacro\maxy{#4-\x};
  
          \foreach \y in {0,...,\maxy}{
            \pgfmathsetmacro\xx{(#1-#3/3)*(1-\x/#4-\y/#4)+(#1+2*#3/3)*\y/#4+(#1-#3/3)*\x/#4};
            \pgfmathsetmacro\yy{(#2-#3/3)*(1-\x/#4-\y/#4)+(#2-#3/3)*\y/#4+(#2+2*#3/3)*\x/#4};
            \node[color=blue] at (\xx,\yy) {\LARGE $\bullet$};
          }
        }
      }

      \node at (2,15.5) {\color{blue} $\PP_{k+1}$};
      \draw[->] (3.5,15.5) -- (5.5,15.5) node [midway, above] {$\nablaperp$} node [midway, below] {$\gradx$};
      \node at (7,15.5) {\color{blue}$\bdPP_{k}$};
      \draw[->] (8.5,15.5) -- (10.5,15.5) node [midway, above] {$\ddivx$} node [midway, below] {$\dcurlS$};
      \node at (12.5,15.5) {$  {\color{red} \dPP_k ( \cF )} \times {\color{blue} \dPP_{k-1} ( \cC)}$};
      \LagrangeTriangle{2}{12}{4}{1};
      \LagrangeTriangle{2}{7}{4}{2};
      \LagrangeTriangle{2}{2}{4}{3};
      \LagrangeTriangle{2}{-3}{4}{4};

      \def\LagrangeVectorDG#1#2#3#4{
        \draw[-,thick] (#1-4/3,#2-4/3) -- (#1+2*4/3,#2-4/3) -- (#1-4/3,#2+2*4/3) -- cycle;
        \foreach \x in {0,...,#4}{
          \pgfmathsetmacro\maxy{#4-\x};
  
          \foreach \y in {0,...,\maxy}{
            \pgfmathsetmacro\xx{(#1-#3/3)*(1-\x/#4-\y/#4)+(#1+2*#3/3)*\y/#4+(#1-#3/3)*\x/#4};
            \pgfmathsetmacro\yy{(#2-#3/3)*(1-\x/#4-\y/#4)+(#2-#3/3)*\y/#4+(#2+2*#3/3)*\x/#4};
            \draw[<->,color=blue,very thick] (\xx-0.2,\yy) -- (\xx+0.2,\yy);
            \draw[<->,color=blue,very thick] (\xx,\yy-0.2) -- (\xx,\yy+0.2);
          }
        }
      }

      \draw[-,thick] (7-4/3,12-4/3) -- (7+2*4/3,12-4/3) -- (7-4/3,12+2*4/3) -- cycle;
      \draw[<->,color=blue,very thick] (7-0.2,12) -- (7+0.2,12);
      \draw[<->,color=blue,very thick] (7,12-0.2) -- (7,12+0.2);
      \LagrangeVectorDG{7}{7}{2.8}{1};
      \LagrangeVectorDG{7}{2}{2.8}{2};
      \LagrangeVectorDG{7}{-3}{2.8}{3};

      \def\dFaceCellsTriangle#1#2#3#4{
        \draw[-,thick] (#1-4/3,#2-4/3) -- (#1+2*4/3,#2-4/3) -- (#1-4/3,#2+2*4/3) -- cycle;
        \ifthenelse{ #4 < 0}
                   {
                   }
                   {
                     \ifthenelse{ #4 > 0}{
                       \foreach \x in {0,...,#4}{
                         \pgfmathsetmacro\maxy{#4-\x};
  
                         \foreach \y in {0,...,\maxy}{
                           \pgfmathsetmacro\xx{(#1-#3/3)*(1-\x/#4-\y/#4)+(#1+2*#3/3)*\y/#4+(#1-#3/3)*\x/#4};
                           \pgfmathsetmacro\yy{(#2-#3/3)*(1-\x/#4-\y/#4)+(#2-#3/3)*\y/#4+(#2+2*#3/3)*\x/#4};
                           \node[color=blue] at (\xx,\yy) {\LARGE $\bullet$};
                         }
                       }
                     }
                     {
                       \node[color=blue] at (#1,#2) {\LARGE $\bullet$};
                     }
                   }

                   \pgfmathsetmacro\maxxa{#4+2};
                   \foreach \xa in {1,...,\maxxa}{
                     \pgfmathsetmacro\xxx{#1-4/3 + 4*\xa/(\maxxa+1)};
                     \pgfmathsetmacro\yyy{#2-4/3 + 4*\xa/(\maxxa+1)};
                     \node[color=red] at (\xxx,#2-4/3) {\LARGE $\bullet$};
                     \node[color=red] at (#1-4/3,\yyy) {\LARGE $\bullet$};
                     \pgfmathsetmacro\xxxx{#1 + (2*4/3) - 4*\xa/(\maxxa+1)};
                     \pgfmathsetmacro\yyyy{#2 -   (4/3) + 4*\xa/(\maxxa+1)};
                     \node[color=red] at (\xxxx,\yyyy) {\LARGE $\bullet$};
                   }
                   
      }
      \dFaceCellsTriangle{12}{12}{2.8}{-1};
      \dFaceCellsTriangle{12}{7}{2.8}{0};
      \dFaceCellsTriangle{12}{2}{2.8}{1};
      \dFaceCellsTriangle{12}{-3}{2.8}{2};

    \end{tikzpicture}
  \end{center}
  \caption{\label{fig:deRhamTriDiv}
    Representation of the finite element spaces involved in
    the grad/curl and curl/div de-Rham complex for triangular meshes
    for $k=0,1,2$ and $3$.
    Same code for colors are used as in \autoref{fig:deRhamQuadDiv}.
  }
\end{figure}

\section{The case of Cartesian meshes}
\label{sec:CartesianMesh}

\subsection{Why the Cartesian case is more complicated}

Inspired by the $\bdPP_0$ triangular case, 
we consider the following discrete divergence
\begin{equation}
  \label{eq:diagQuadFailing}
  \begin{array}{rcr@{\quad \longmapsto \quad }l}
    \ddivx & \, : \, & \bdQQ_0
    & \dPP_0 (\cF) \\
    & & \bu & a \  \text{such that} \  a_f := \Jump{\bu \cdot \bn_f}. 
  \end{array}
\end{equation}
We directly see that $\dim \bdQQ_0 = 2 \# \cC = 2N$ and 
$\dim \dPP_0 (\cF) = \# \cF = 2 N$.
If $N_x$ is the number of cells in the $x$ direction and $N_y$ is the number
of cells in the $y$ direction, then $N = N_x N_y$. Also, it is easy to see
than $\ker \ddivx $ is composed of $N_x + N_y$ components. This means that
$$\rank \ddivx = 2N - N_x - N_y,$$
whereas we expect the $\ddivx$ to be of rank $2N-1$. Also, the
kernel of the divergence is much smaller than what is expected for
correctly approximating continuous divergence free vectors. 

A second problem that we see is that when deriving an element of
$\QQ_k$, it does not give an element of $\QQ_{k-1}$. Therefore, the
finite element space that should be put before the discrete
$\nablaperp$ operator is difficult to determine. If we put
$\QQ_0$, then the derivative will be $0$ and the range of
the discrete $\nablaperp$ will be reduced to $0$. If $\QQ_1$ is used, then
the $\nablaperp$ will be in a space larger than $\QQ_0$.

\subsection{Finite element space definition}

We need to define some finite element spaces for discretising the
different spaces, $A$, $\bB$ and $C$ that are involved in the
de-Rham diagram \eqref{eq:DiagramCurl}. 
Based on what was done in the triangular case \eqref{eq:DiagramTri},
we propose to start by
the continuous finite element space $\QQ_{k+1}$ for the space $A$. For the space
$\bB$, the initial plan is to take $\bdQQ_k$, but we need to enrich it
with the curl of $\QQ_{k+1}$. For simplifying, we relax the
continuity conditions on $\nablaperp \QQ_{k+1}$, and add to $\bdQQ_k$ all the
piecewise discontinuous polynomials that have the same degree as the one
of $\nablaperp \QQ_{k+1}$. This leads to choose $\bdQQkCurl (\cC)$
for discretizing the space $\bB$. Last, taking the divergence in the
sense of distributions for  $\bdQQkCurl (\cC)$
naturally maps to $\dQQ_k ( \cF ) \times \dQQkHat ( \cC)$. This is why
the following diagram is considered
\begin{equation}
  \label{eq:DiagramQuad}
  \QQ_{k+1}
  \quad
  \answerI{\diag{\nablaperp}}
  \quad
  \bdQQkCurl (\cC)
  \quad
  \diag{\ddivx}
  \quad
  \dQQ_k ( \cF ) \times \dQQkHat  (\cC).
\end{equation}

\subsection{Dimension of the finite element spaces}

We first compute the dimension of each of the finite element spaces involved
in \eqref{eq:DiagramQuad}, induced by the number
of faces, points and cells that was computed in \autoref{prop:NElementsQuad}.

\begin{prop}[Dimension of the finite element spaces]
  \label{prop:dimFEQuad}
  $$
  \left\{
  \begin{array}{r@{\, = \, }l}
    \dim \QQ_{k+1} & N(k+1)^2 \\
    \dim \bdQQkCurl ( \cC ) &  N \left( 2(k+1)^2 + 2k +1) \right)\\
    \dim \left( \dQQ_k ( \cF ) \times \dQQkHat ( \cC) \right) &
    N (k^2 + 4 k +2)
    \\
  \end{array}
  \right.
  $$

\end{prop}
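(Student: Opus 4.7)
The first of the three identities was already established in \autoref{prop:nonconformalDimension}, so I would only need to verify the two remaining ones. Both are per-cell calculations: since all three spaces are defined cellwise and (for $\dQQ_k(\cF)$) facewise, the totals reduce by \autoref{prop:NElementsQuad} to dimension counts of tensor-product polynomial spaces, which I would then multiply by the appropriate entity count.

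For $\dim \bdQQkCurl(\cC)$, I would set $V_1 := \QQ_{k,k} + \QQ_{k-1,k+1}$ and $V_2 := \QQ_{k,k} + \QQ_{k+1,k-1}$ and handle them separately. Using inclusion-exclusion with the intersections $\QQ_{k,k} \cap \QQ_{k-1,k+1} = \QQ_{k-1,k}$ and $\QQ_{k,k} \cap \QQ_{k+1,k-1} = \QQ_{k,k-1}$, a direct monomial count yields $\dim V_1 = \dim V_2 = (k+1)^2 + k$ per cell, so $\dim (V_1 \times V_2) = 2(k+1)^2 + 2k$. The step requiring actual care is showing that the enrichment vector $(x^k y^{k+1}, x^{k+1} y^k)^{T}$ contributes exactly one additional dimension: a bidegree inspection suffices, since $x^k y^{k+1}$ has $\degy = k+1$, excluding it from $\QQ_{k,k}$, and $\degx = k$, excluding it from $\QQ_{k-1,k+1}$; hence its first component is already outside $V_1$, and the enrichment is genuinely independent of $V_1 \times V_2$. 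Adding one dimension gives $2(k+1)^2 + 2k + 1$ per cell, and multiplying by $N = \#\cC$ yields the announced value.

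For $\dim(\dQQ_k(\cF) \times \dQQkHat(\cC))$ I would compute the two factors separately. Each face is one-dimensional, so $\dQQ_k$ restricted to a face coincides with $\dPP_k$ of dimension $k+1$, and with $\#\cF = 2N$ this gives $\dim \dQQ_k(\cF) = 2N(k+1)$. For $\dQQkHat(\cC)$, the plan is to first observe that $\QQ_{k-1} = \QQ_{k-1,k-1}$ is already contained in $\QQ_{k,k-1}$, so the three-term sum collapses to $\QQ_{k,k-1} + \QQ_{k-1,k}$; inclusion-exclusion with $\QQ_{k,k-1} \cap \QQ_{k-1,k} = \QQ_{k-1,k-1}$ then gives per-cell dimension $(k+1)k + k(k+1) - k^2 = k(k+2)$. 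Summing the two contributions yields $2N(k+1) + Nk(k+2) = N(k^2 + 4k + 2)$, as claimed.

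The whole argument is essentially dimension bookkeeping, and since the spaces are all cellwise/facewise defined the global counts are immediate from \autoref{prop:NElementsQuad}; the only place where a slip is possible is the independence check of the enrichment vector in $\bdQQkCurl(\cC)$, but the bidegree argument sketched above handles it cleanly.
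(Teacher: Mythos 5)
Your proposal is correct and follows essentially the same route as the paper: per-cell and per-face dimension counts combined with the entity counts of \autoref{prop:NElementsQuad}. The only (harmless) differences are that you compute $\dim\left( \QQ_{k,k} + \QQ_{k-1,k+1}\right)$ by inclusion--exclusion where the paper counts the extra monomials $x^i y^{k+1}$ directly, and that you add an explicit bidegree check that the enrichment vector lies outside $V_1\times V_2$, which the paper leaves implicit in the direct-sum notation of the definition.
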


\begin{proof}
  The dimension of $\QQ_{k+1}$ was already proven in
  \autoref{prop:nonconformalDimension}.
  We are now interested in $\bdQQkCurl ( \cC )$. Let us recall that
  $$
  \bdQQkCurl (\cC)
  = \left[ \left( \dd \QQ_{k,k} + \dd \QQ_{k-1,k+1} \right) \times
    \left( \dd \QQ_{k,k} + \dd \QQ_{k+1,k-1} \right) \right]
  \oplus \Vect \left(
  \begin{array}{c}
    x^k y^{k+1} \\
    x^{k+1} y^k \\
  \end{array}
  \right). 
  $$
  We first focus on the dimension of
  $\QQ_{k,k} +  \QQ_{k-1,k+1}$. The elements of
  $\QQ_{k-1,k+1}$ are all in $ \QQ_{k,k}$, except for the case in which
  the degree in $y$ is $k+1$. A basis of these polynomials that
  are in $\QQ_{k-1,k+1}$ but not in $ \QQ_{k,k}$ is therefore given
  by $x^i y^{k+1}$ for $0 \leq i \leq k-1$, so that
  $$\dim \left(  \QQ_{k,k} +  \QQ_{k-1,k+1} \right)
  = (k+1)^2 + k.$$
  The space $\QQ_{k,k} + \QQ_{k+1,k-1}$ has the same dimension, so
  that 
  $$
  \begin{array}{r@{\, = \, }l}
    \dim \bdQQkCurl ( \cC ) &  N \left(
    \dim \left( \QQ_{k,k} + \QQ_{k-1,k+1} \right)
    +
    \dim \left( \QQ_{k,k} + \QQ_{k+1,k-1} \right)
    + 1
    \right)\\
    & N \left( 2 \left( (k+1)^2 + k \right) +  1\right) \\
    \dim \bdQQkCurl ( \cC ) 
    & N \left( 2 (k+1)^2 + 2k  +  1\right). 
  \end{array}
  $$
  It remains to compute the dimension of
  $\dQQ_k ( \cF ) \times \dQQkHat ( \cC)$. We recall that
  $$\dQQkHat ( \cC) =
  \dd \QQ_{k-1} ( \cC) + \dd \QQ_{k,k-1} ( \cC) + \dd \QQ_{k-1,k} ( \cC).$$
  This leads to
  $$
  \begin{array}{r@{\, = \, }l}
    \dim \dQQkHat ( \cC) &
    N \left( k^2 + 2k\right)\\
    & N k(k+2).
  \end{array}
  $$
  \answerI{We finally find}
  $$
  \begin{array}{r@{\, = \, }l}
    \dim \left( \dQQ_k ( \cF ) \times \dQQkHat ( \cC) \right)
    & \dim \left( \dQQ_k ( \cF ) \right) + \dim \dQQkHat ( \cC) 
    \\
    & \# \cF (k+1) + N k (k+2) \\
    & 2 N (k+1) + N k(k+2) \\
    \dim \left( \dQQ_k ( \cF ) \times \dQQkHat ( \cC) \right)
    & N (k^2 + 4 k + 2). \\
  \end{array}
  $$
\end{proof}

\subsection{Study of $\dnablaperp$}

We are now interested in the study of the $\dnablaperp$ operator.
\begin{prop}[$\dnablaperp$ in the quadrangular case]
  \label{prop:nablaPerpQuad}
  We have
  $$
  \left\{
  \begin{array}{r@{\, = \, }l}
    \dim \left( \ker \dnablaperp \right) & 1 \\
    \rank \left( \dnablaperp \right)     & N (k+1) ^2 - 1. \\
  \end{array}
  \right.
  $$
\end{prop}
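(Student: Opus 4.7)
The proof will proceed exactly as in the triangular case treated in Proposition \ref{prop:nablaPerpTri}, substituting $\QQ_{k+1}$ for $\PP_{k+1}$ and invoking the dimension count from \autoref{prop:dimFEQuad} instead of \autoref{prop:dimFETri}. The geometry of the cells plays no role here; what matters is only that elements of $\QQ_{k+1}$ are globally continuous functions on the connected torus $\T^2$.

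The plan is the following. First, to identify $\ker \dnablaperp$, I would take a $\psi \in \QQ_{k+1}$ with $\dnablaperp \psi = 0$. Since $\dnablaperp$ is the classical cellwise $\nablaperp$ applied to a conforming function (no distributional jump contributions arise because $\psi$ is continuous), this forces $\partial_x \psi = \partial_y \psi = 0$ on each cell. Hence $\psi$ is piecewise constant, and as $\psi$ is continuous across faces and the periodic domain $\T^2$ is connected, $\psi$ must be globally constant. This gives $\ker \dnablaperp = \KK$, so $\dim \ker \dnablaperp = 1$.

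Then, applying the rank-nullity theorem together with the dimension of $\QQ_{k+1}$ already established in \autoref{prop:dimFEQuad},
$$
\rank \left( \dnablaperp \right) = \dim \QQ_{k+1} - \dim \ker \dnablaperp = N(k+1)^2 - 1,
$$
which is the claimed value. No obstacle is expected; the argument is a verbatim transcription of the triangular case, and the only substantive ingredient beyond linear algebra is the connectedness of the periodic domain, which ensures that piecewise constant continuous functions are uniform.
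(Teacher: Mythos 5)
Your proposal is correct and follows exactly the route the paper takes: the paper simply states that the proof is the same as in \autoref{prop:nablaPerpTri} (identify the kernel with the constants $\KK$ via continuity and connectedness of the torus, then apply rank-nullity with $\dim \QQ_{k+1} = N(k+1)^2$ from \autoref{prop:dimFEQuad}). Nothing is missing.
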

The proof is exactly the same as \autoref{prop:nablaPerpTri}.


\subsection{Discrete divergence free polynomials on the reference cell}

We consider the following application
\begin{equation}
  \bC ^{\partial} _k \, : \, \psi \in \dQQ_{k+1} (\hat{K}) \, \longmapsto \,
  \bTr \left( \nablaperp \psi \right) \in \dd \PP_k ( \partial \hat{K} )
\end{equation}

\begin{prop}
  \label{prop:CkQuad}
  We denote by $\mathbbm{k}$ the constant elements of
  $\dd \PP_k ( \partial \hat{K} )$. Then 
  $$\dd \PP_k ( \partial \hat{K} ) = \Range \bC^{\partial} _k \oplus \mathbbm{k},$$
  where the sum is orthogonal. 
\end{prop}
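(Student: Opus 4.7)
The plan is to follow the structure of the proof of \autoref{prop:CkTri} almost verbatim, changing only the dimension counts to those appropriate for the reference square $\hat{K}$. The orthogonality step, which uses integration by parts and the identity $\divx \nablaperp = 0$, is purely local and does not use the triangular shape of the cell, so it will transfer without change.

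First I would establish $\Range \bC^{\partial}_k \perp \mathbbm{k}$. For any $c \in \mathbbm{k}$ (viewed also as the constant extension to $\hat{K}$) and any $\psi \in \dQQ_{k+1}(\hat{K})$, the divergence theorem yields
$$\Int{\partial \hat{K}}{} \bC^{\partial}_k(\psi)\, c = \Int{\hat{K}}{} \divx(c\, \nablaperp \psi) = \Int{\hat{K}}{} c\, \divx(\nablaperp \psi) + \Int{\hat{K}}{} \nabla c \cdot \nablaperp \psi = 0,$$
since $c$ is constant and $\divx \nablaperp \psi = 0$. Hence the sum $\Range \bC^{\partial}_k + \mathbbm{k}$ is direct and orthogonal.

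Next I would identify $\ker \bC^{\partial}_k$. If $\bC^{\partial}_k(\psi) = 0$, then the normal component of $\nablaperp \psi$ vanishes on each edge, which is equivalent to saying that the tangential derivative of $\psi$ vanishes along $\partial \hat{K}$. Because $\partial \hat{K}$ is connected and $\psi$ is continuous on it, $\psi$ is constant on the whole boundary. Using the tensor-product Lagrange basis of $\dQQ_{k+1}(\hat{K})$, the boundary degrees of freedom number $4(k+1)$ (four vertices plus $k$ interior nodes per edge), and therefore the interior degrees of freedom number $(k+2)^2 - 4(k+1) = k^2$. Imposing that the boundary value be constant collapses the $4(k+1)$ boundary DOFs to a single one, so
$$\dim \ker \bC^{\partial}_k = 1 + k^2.$$

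Finally I would apply the rank-nullity theorem:
$$\rank \bC^{\partial}_k = \dim \dQQ_{k+1}(\hat{K}) - \dim \ker \bC^{\partial}_k = (k+2)^2 - (1 + k^2) = 4k + 3.$$
Since $\dim \dd \PP_k(\partial \hat{K}) = 4(k+1) = 4k+4$, the orthogonal sum $\Range \bC^{\partial}_k \oplus \mathbbm{k}$ has dimension $4k+3 + 1 = 4k+4$, and the inclusion $\Range \bC^{\partial}_k \oplus \mathbbm{k} \subset \dd\PP_k(\partial \hat{K})$ is then an equality, concluding the proof. No step appears to be a genuine obstacle; the only point that requires a sentence of care is the characterisation of $\ker \bC^{\partial}_k$, because the boundary of the square has four edges rather than three, but connectedness of $\partial \hat{K}$ together with continuity of $\psi$ at the vertices still forces a single constant value on the whole boundary.
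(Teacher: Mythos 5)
Your proposal is correct and follows essentially the same route as the paper: the same integration-by-parts argument for orthogonality, the same identification of $\ker \bC^{\partial}_k$ as the functions constant on $\partial \hat{K}$ with free interior nodes (dimension $1+k^2$), and the same rank-nullity and dimension-count conclusion. The extra sentence you add justifying why $\psi$ is constant on the whole (connected, four-edged) boundary is a welcome clarification of a step the paper states without comment, but it is not a different method.
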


\begin{proof}
  The proof that the sum of $\Range \bC^{\partial} _k $ and $\mathbbm{k}$ is
  direct and orthogonal follows exactly the same lines as the proof
  for the triangular case of \autoref{prop:CkTri}.

  We are now interested in the study of the kernel of $\bC^{\partial} _k$.
  Suppose that an element $\psi$ is such that $\bC^{\partial} _k (\psi) = 0$.
  We consider the classical Lagrange basis of $\dQQ_{k+1} (\hat{K})$.
  Then $\psi$ is such that its value on the boundary
  of $\hat{K}$ is constant, and may take any value on the degrees of freedom
  matching with the interior nodes. This means that
  $$\dim \left( \ker \bC^{\partial} _k \right) = 1 + k^2. $$
  Using the rank-nullity theorem gives
  $$
  \begin{array}{r@{\, = \, }l}
  \rank \left( \bC^{\partial} _k \right)
  &
  \dim \dQQ_{k+1} - \left( 1 + k^2 \right) \\
  &
  (k+2)^2 -  1 - k^2  \\
  &
  k^2 + 4 k + 4 - 1 - k^2 
  \\
  \rank \left( \bC^{\partial} _k \right)
  & 4 k +3. \\
  \end{array}
  $$
  We also know that $\dim \dd \PP_k \left( \partial \hat{K} \right) = 4(k+1)$.
  We have then $\Range \bC^{\partial} _k \oplus \mathbbm{k}
  \subset \dd \PP_k \left( \partial \hat{K} \right)$, and
  $\dim \left( \Range \bC^{\partial} _k \oplus \mathbbm{k} \right)  =
  \dim \dd \PP_k \left( \partial \hat{K} \right)$, so that
  $\Range \bC^{\partial} _k \oplus \mathbbm{k}  =
  \dd \PP_k \left( \partial \hat{K} \right)$, which ends the proof. 

\end{proof}

\begin{prop}[Decomposition of divergence free elements]
  \label{prop:decompositionQuad}
  We denote by $\mathscr{L}^{f,i}$  the Legendre polynomial of
  degree $i$
  on the face $f$ of $\hat{K}$, normalised such that
  $$\Int{\partial \hat{K}}{} \left( \mathscr{L}^{f,i} \right)^2 = 1.$$

  Suppose that $\bu \in \bdQQkCurl (\hat{K})$ is divergence free. Then
  $\bu$ can be uniquely decomposed as
  $$\bu = {\bar \bv_{\bu}} ^0 + {\bar \bv_{\bu}} ^1
  + \Sum{f \in \cF (\hat{K})}{} \Sum{i=1}{k}
  {\bar \bv }_{\bu} ^{f,i},
  $$
  where
  \begin{itemize}
  \item $\answerI{\bar \bv}_{\bu} ^0$ is in the set
    $$\Phi_k = \left\{
    \bu \in  \bdQQkCurl \qquad \divx \bu = 0 \qquad \bTr(\bu) = 0
    \right\}.$$
  \item ${\bar \bv}_{\bu} ^1$ is in
    $\mathrm{Vec} \left(
    \left(
    \begin{array}{c}
      1 \\
      0
    \end{array}
    \right), 
    \left(
    \begin{array}{c}
      0 \\
      1
    \end{array}
    \right), 
    \left(
    \begin{array}{c}
      1-2x \\
      2y-1
    \end{array}
    \right)
    \right)$.
  \item ${\bar \bv }_{\bu} ^{f,i}$ is orthogonal to $\Phi_k$ and such that
    \begin{itemize}
    \item $ \divx {\bar \bv }_{\bu} ^{f,i} = 0$. 
    \item $\bTr \left( {\bar \bv }_{\bu} ^{f,i} \right)$ is orthogonal to all
      $\mathscr{L}^{g,j}$ for $\left\{g,j\right\} \neq \left\{f,i\right\}$.
    \end{itemize}
  \end{itemize}
\end{prop}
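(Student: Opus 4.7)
The plan is to mirror the proof of \autoref{prop:decompositionTri}, adapted to the quadrilateral geometry. The key conceptual novelty is that the residual component ${\bar \bv}_{\bu}^1$ now lies in a three-dimensional space rather than in the two-dimensional space of constants: on a quadrilateral, constant normal traces on four faces provide four parameters, constrained by the single zero-sum relation from the divergence theorem, yielding a three-parameter family. Before addressing uniqueness or existence, I would first check that each of the three generators $\bv \in \{(1,0)^T,\ (0,1)^T,\ (1-2x, 2y-1)^T\}$ lies in $\bdQQkCurl$, is divergence-free, and is orthogonal to $\Phi_k$. The orthogonality argument transfers directly from the triangular case: $\bv^\perp$ is easily seen to be divergence-free for each generator, so for any $\bu_\Phi = \nablaperp \psi^\Phi \in \Phi_k$ with $\psi^\Phi$ constant on $\partial \hat K$, integration by parts yields
$$\int_{\hat K} \bv \cdot \bu_\Phi = \int_{\hat K} \bv^\perp \cdot \nabla \psi^\Phi = \int_{\partial \hat K} \psi^\Phi\, \bv^\perp \cdot \bn = \psi^\Phi|_{\partial \hat K} \int_{\hat K} \divx \bv^\perp = 0.$$

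Uniqueness then proceeds verbatim from the triangular proof: taking the scalar product with ${\bar \bv}_{\bu}^0 \in \Phi_k$ kills that term by orthogonality; taking traces of the remainder and testing against each $\mathscr{L}^{g,j}$ for $j \geq 1$ kills the ${\bar \bv}_{\bu}^{f,i}$ components one by one; the leftover ${\bar \bv}_{\bu}^1$ is then forced to zero. For existence, the boundary-mode components ${\bar \bv}_{\bu}^{f,i}$ are constructed exactly as in the triangular case, by pulling back $\mathscr{L}^{f,i}$ through \autoref{prop:CkQuad} to some $\psi^{f,i}$, setting $\be^{f,i} := \nablaperp \psi^{f,i} - \mathscr{P}(\nablaperp \psi^{f,i})$ where $\mathscr{P}$ is the orthogonal projection on $\Phi_k$, scaling by $\lambda_{f,i} = \int_{\partial \hat K} \bTr(\bu)\, \mathscr{L}^{f,i}$, and defining ${\bar \bv}_{\bu}^0 = \mathscr{P}(\bu)$. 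The remainder ${\bar \bv}_{\bu}^1 := \bu - {\bar \bv}_{\bu}^0 - \sum_{f,i} {\bar \bv}_{\bu}^{f,i}$ is, by construction, divergence-free, orthogonal to $\Phi_k$, and has normal trace that is constant on each face.

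The hard part is the final identification of ${\bar \bv}_{\bu}^1$ as an element of the proposed three-dimensional space. For this I would directly compute the normal-trace configurations $(\alpha_L, \alpha_R, \alpha_B, \alpha_T)$ of the three generators on the reference square $[0,1]^2$: $(1,0)^T$ gives $(-1,1,0,0)$, $(0,1)^T$ gives $(0,0,-1,1)$, and $(1-2x, 2y-1)^T$ gives $(-1,-1,1,1)$. These three configurations are linearly independent and all satisfy the divergence-theorem constraint $\alpha_L + \alpha_R + \alpha_B + \alpha_T = 0$, so they span the full three-dimensional space of admissible constant trace configurations on $\partial \hat K$. Given ${\bar \bv}_{\bu}^1$'s own configuration (which also satisfies the constraint since $\int_{\hat K} \divx {\bar \bv}_{\bu}^1 = 0$), I can then select $\bw$ in the proposed three-dimensional space with matching trace; the difference ${\bar \bv}_{\bu}^1 - \bw$ is divergence-free in $\bdQQkCurl$ with vanishing trace on $\partial \hat K$, hence lies in $\Phi_k$, but being a difference of two vectors orthogonal to $\Phi_k$, it is itself orthogonal to $\Phi_k$, and must therefore vanish. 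This yields ${\bar \bv}_{\bu}^1 = \bw$ and completes the decomposition.
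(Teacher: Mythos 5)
Your proposal is correct and follows essentially the same route as the paper's proof: orthogonality of the three low-order generators to $\Phi_k$ by integration by parts, uniqueness and the construction of ${\bar \bv}_{\bu}^{f,i}$ carried over verbatim from \autoref{prop:decompositionTri} via \autoref{prop:CkQuad}, and identification of the residual ${\bar \bv}_{\bu}^1$ by matching its constant face traces (subject to the zero-sum constraint from the divergence theorem) against the three-dimensional span of the generators and concluding via $\Phi_k$-membership plus orthogonality to $\Phi_k$. The only cosmetic difference is that the paper writes the matching element $\bk$ with explicit coefficients in terms of the face values $b_0,\dots,b_3$, whereas you argue by linear independence and a dimension count; these are interchangeable.
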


\begin{proof}
  We first prove that ${\bar \bv}_{\bu} ^1 \perp \Phi_k$.
  We denote by $\bu_{\Phi}$ an element of $\Phi_k$. 
  As in the proof of \autoref{prop:decompositionTri}, a
  $\psi^{\Phi}$ exists such that
  $\bu_{\Phi} = \nablaperp \Psi^{\Phi}$, and the trace of 
  $\Psi^{\Phi}$ is constant. Then
  $$
  \begin{array}{r@{\, = \, }l}
    \Int{\hat{K}}{} {\bar \bv}_{\bu} ^1 \cdot \bu_{\Phi} &
    \Int{\hat{K}}{} {\bar \bv}_{\bu} ^1 \cdot \nablaperp \Psi^{\Phi}  
    \\
    &
    \Int{\hat{K}}{} \left( {\bar \bv}_{\bu} ^1 \right)^\perp \cdot \nabla \Psi^{\Phi}  
    \\
    &
    \Int{\hat{K}}{} \divx \left( \left( {\bar \bv}_{\bu} ^1 \right)^\perp  \Psi^{\Phi}
    \right)
    -
    \Int{\hat{K}}{} \divx \left( \left( {\bar \bv}_{\bu} ^1 \right)^\perp
    \right) \Psi^{\Phi}
    \\
    &
    \Int{\partial \hat{K}}{}  \bTr \left(
    \left( {\bar \bv}_{\bu} ^1\right)^\perp  \Psi^{\Phi} \right) 
    \\
    \Int{\hat{K}}{} {\bar \bv}_{\bu} ^1 \cdot \bu_{\Phi}
    &
    \Tr \left( \Psi^{\Phi} \right)
    \Int{\partial \hat{K}}{}  \bTr \left(
    \left( {\bar \bv}_{\bu} ^1\right)^\perp \right) 
    ,
  \end{array}
  $$
  because $\divx \left( \left( {\bar \bv}_{\bu} ^1 \right)^\perp \right) = 0$, 
  and because $\Tr \left( \Psi^{\Phi} \right)$ is constant. Computing
  the integral on $\partial \hat{K}$ of all the components of
  $\left( {\bar \bv}_{\bu} ^1\right)^\perp$
  (namely $(1,0)^T$, $(0,1)^T$ and
  $\left(1 - 2x, 2y - 1 \right)^T$ ) leads to
  $$ \Int{\hat{K}}{} {\bar \bv}_{\bu} ^1 \cdot \bu_{\Phi} = 0. $$
  We have then proven that ${\bar \bv}_{\bu} ^1 \perp \Phi_k$. 
  The proof of uniqueness follows exactly the same lines as the proof
  of \autoref{prop:decompositionTri}.

  The strategy for the existence begins
  similarly but then differs. 
  We consider one $\mathscr{L}^{f,i}$ of
  $\dd \PP_k \left( \partial \hat{K} \right)$, for $i \geq 1$. Then, as was done
  in the proof of \autoref{prop:decompositionTri}, a
  $\psi^{f,i}$ exists such that
  $\bC^{\partial} _k \left( \psi^{f,i} \right) = \mathscr{L}^{f,i}$. Denoting
  by $\mathscr{P}$ the orthogonal projection on $\Phi_k$, we define
  $\be^{f,i} := \nablaperp \left( \psi^{f,i} \right) -
  \mathscr{P} \left( \nablaperp \left( \psi^{f,i} \right) \right)$. Then
  $\be^{f,i}$ is orthogonal to $\Phi_k$, is divergence free, and is such
  that $\bTr\left( \be^{f,i} \right) = \mathscr{L}^{f,i}$. We consider now one
  divergence free $\bu \in  \bdQQkCurl (\hat{K})$. We define
  $$\lambda_{f,i} := \Int{\partial \hat{K}}{} \bTr(\bu) \mathscr{L}^{f,i},$$
  and set $ {\bar \bv }_{\bu} ^{f,i} = \lambda_{f,i} \be^{f,i}$. We also
  set ${\bar \bv_{\bu}} ^0 = \mathscr{P} ( \bu )$, and
  $$
  {\bar \bv_{\bu}} ^1 := \bu - {\bar \bv_{\bu}} ^0 -
  \Sum{f \in \cF (\hat{K})}{} \Sum{i=1}{k}
  {\bar \bv }_{\bu} ^{f,i}.
  $$
  Then ${\bar \bv_{\bu}} ^0$ and the ${\bar \bv }_{\bu} ^{f,i}$ ensure all the
  properties required. It remains to prove that ${\bar \bv_{\bu}} ^1$ is
  in the expected space.
  We know that ${\bar \bv_{\bu}} ^1$ is divergence free, orthogonal
  to $\Phi_k$, and that
  its trace is constant on each face, because all the components
  in $\mathscr{L}^{f,i}$ for $i \geq 1$ were removed. 
  We denote by $b_0$,$b_1$,$b_2$ and $b_3$ the
  values of the trace on the boundary of ${\bar \bv_{\bu}} ^1$
  (see \autoref{fig:dofBoundary}).
  \begin{figure}
    \begin{center}
      \begin{tikzpicture}
        \node[draw, minimum size=2cm, regular polygon, 
          regular polygon sides=4, 
          label=side 1:$0$, label=side 2:$-1$, label=side 3:$0$, label=side 4:$1$] (A) {};
        \node[below = 10mm of A] (A2)
             {$\left(
               \begin{array}{c}
                 1 \\
                 0
               \end{array}
               \right)$};

        \node[right = 15mm of A,draw, minimum size=2cm, regular polygon, 
          regular polygon sides=4, 
          label=side 1:$1$, label=side 2:$0$, label=side 3:$-1$, label=side 4:$0$] (B) {};
        \node[below = 10mm of B] (B2)
             {$\left(
               \begin{array}{c}
                 0 \\
                 1
               \end{array}
               \right)$};

        \node[right = 15mm of B,draw, minimum size=2cm, regular polygon, 
          regular polygon sides=4, 
          label=side 1:$1$, label=side 2:$-1$, label=side 3:$1$, label=side 4:$-1$] (C) {};
        \node[below = 10mm of C] (C2)
             {$\left(
               \begin{array}{c}
                 1-2x \\
                 2y - 1
               \end{array}
               \right)$};

        \node[right = 15mm of C,draw, minimum size=2cm, regular polygon, 
          regular polygon sides=4, 
          label=side 1:$b_2$, label=side 2:$b_3$, label=side 3:$b_0$, label=side 4:$b_1$] (D) {};
        \node[below = 10mm of D] (D2)
             {${\bar \bv_{\bu}} ^1$};

      \end{tikzpicture}
      \caption{\label{fig:dofBoundary} Value of the trace for
        ${\bar \bv_{\bu}} ^1$ and the basis used for representing it.}
    \end{center}
  \end{figure}
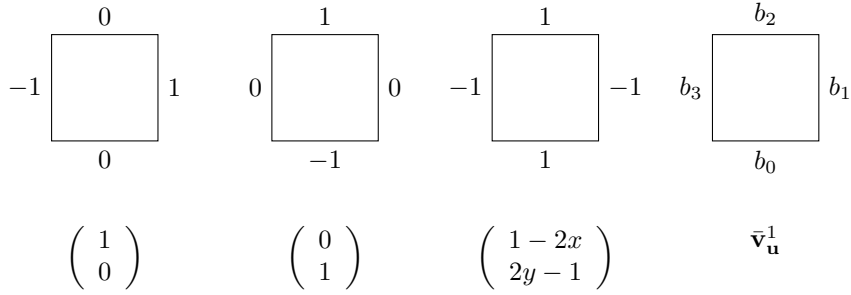
  We then define
  $$
  \bk :=
  \dfrac{b_1-b_3}{2}\, 
  \left(
  \begin{array}{c}
    1 \\
    0 \\
  \end{array}
  \right)
  +
  \dfrac{b_2-b_0}{2}\, 
  \left(
  \begin{array}{c}
    0 \\
    1 \\
  \end{array}
  \right)
  +
  \dfrac{b_0+b_2}{2}\, 
  \left(
  \begin{array}{c}
    1 - 2x \\
    2y - 1  \\
  \end{array}
  \right)
  ,
  $$
  then $\Tr\left(\bk\right) = \Tr\left({\bar \bv_{\bu}} ^1\right)$ (note that
  as $\divx {\bar \bv_{\bu}} ^1 = 0$, we have $b_0 + b_1 + b_2 + b_3 = 0$).
  Also, $\bk$ is divergence free and orthogonal to $\Phi_k$. This
  means that ${\bar \bv_{\bu}} ^1 - \bk$ is divergence free and orthogonal
  to $\Phi_k$. But as $\Tr \left( {\bar \bv_{\bu}} ^1 - \bk \right) = 0$,
  ${\bar \bv_{\bu}} ^1 - \bk$ is also in $\Phi_k$, and so 
  ${\bar \bv_{\bu}} ^1 - \bk = 0$, which proves that
  ${\bar \bv_{\bu}} ^1$ is in
  $\mathrm{Vec} \left(
  \left(
  \begin{array}{c}
    1 \\
    0
  \end{array}
  \right), 
  \left(
  \begin{array}{c}
    0 \\
    1
  \end{array}
  \right), 
  \left(
  \begin{array}{c}
    1-2x \\
    2y-1
  \end{array}
  \right)
  \right)$. 
  This ends the proof. 

\end{proof}

It is important to note that the decomposition of
\autoref{prop:decompositionQuad} was proven on the reference element,
but holds also on all the cells of a Cartesian meshes, the basis function
$\left(
\begin{array}{c}
  1-2x \\
  2y-1
\end{array}
\right)
$
being replaced by
$
\left(
\begin{array}{c}
  \dfrac{2}{L_x} \, \left( m_x - x \right) \\
  \dfrac{2}{L_y} \, \left( y - m_y \right)
\end{array}
\right)
$,
where $(m_x, m_y)$ is the centre of the cell and $L_x$ and $L_y$ are
the size of the cell in the directions $x$, and $y$. 

\subsection{$\ker \left( \ddivx \right)$ and $\Range \left( \nablaperp
  \right)$}

\begin{prop}
  \label{prop:kerRankQuad}

  We denote by $\bKK$ the set of uniform vectors. Then
  $$ \ker \left( \ddivx \right)
  =
  \Range \left( \nablaperp \right)
  \oplus \bKK.$$
\end{prop}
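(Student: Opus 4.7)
The plan is to mirror the proof of \autoref{prop:kerRankTri}, replacing the triangular decomposition by \autoref{prop:decompositionQuad}. The inclusion $\Range \nablaperp + \bKK \subseteq \ker \ddivx$ is established just as in the triangular case: for $\psi \in \QQ_{k+1}$, $\divx \nablaperp \psi$ vanishes cellwise, and the jump $\Jump{\nablaperp \psi \cdot \bn_f}$ vanishes because $\nablaperp \psi \cdot \bn_f$ coincides, up to sign, with the tangential derivative of the continuous function $\psi$ along $f$; constants are trivially in the kernel. The orthogonality $\Range \nablaperp \perp \bKK$ follows by the same integration-by-parts argument, rewriting $\bk \cdot \nablaperp \psi = \divx(\bkperp \psi)$ with $\bkperp$ constant, summing over cells, and cancelling the boundary terms via continuity of $\psi$ across faces. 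Combined with \autoref{prop:nablaPerpQuad}, this already provides the lower bound $\dim \ker \ddivx \geq N(k+1)^2 + 1$.

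The reverse inequality is the heart of the proof. I would take an arbitrary $\bu \in \ker \ddivx$ and apply \autoref{prop:decompositionQuad} cellwise to decompose $\bu|_c = \bv^{0,c} + \bv^{1,c} + \sum_{f \in \cF(c)} \sum_{i=1}^{k} \bv^{f,i,c}$, with $\bv^{0,c} \in \Phi_k$, $\bv^{1,c}$ in the three-dimensional const-like space, and each $\bv^{f,i,c}$ one-dimensional. Because the traces of the three blocks live in mutually orthogonal Legendre modes on each face (and the edge modes $\bv^{f,i,c}$ additionally have trace supported on the single face $f$ of $c$), the jump condition $\Jump{\bu \cdot \bn_f} = 0$ in $\dPP_k(\cF)$ decouples into independent constraints on each block. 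The $\Phi_k$ block is unconstrained by jumps and contributes $N \dim \Phi_k$; the edge-mode block, by pairing parameters of the two cells adjacent to each face for every Legendre degree $i \geq 1$, reduces exactly as in the triangular argument; and the const-like block, subjected to constant-trace matching on each face, is identified with $\nablaperp \QQ_1 \oplus \bKK$ of dimension $(N-1) + 2 = N + 1$. Adding the three contributions yields precisely $N(k+1)^2 + 1$, matching the lower bound.

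The main obstacle is the last identification. The inclusion $\nablaperp \QQ_1 \oplus \bKK \subseteq \{\text{const-like with zero jump}\}$ is direct: for $\psi = a + bx + cy + dxy$ on a reference cell, $\nablaperp \psi$ expands in the basis $\{(1,0), (0,1), (1-2x, 2y-1)\}$ as $(-c - d/2)(1,0) + (b + d/2)(0,1) + (d/2)(1-2x, 2y-1)$, and the normal jumps vanish by continuity of $\psi$; constants lie trivially in both summands. The sum is direct on the torus because $\nablaperp \QQ_1$ consists of vectors of vanishing mean while nonzero constants do not. Equality of dimensions then reduces to a redundancy count on the $2N$ face-matching equations acting on the $3N$ cellwise parameters: the global identity $\sum_f |f| \Jump{\bu \cdot \bn_f} = \sum_c \int_c \divx \bu = 0$, which holds on the cellwise divergence-free const-like subspace, yields one redundancy, and a cell-by-cell analysis combined with the connectedness of the mesh rules out any further redundancy, in direct analogy with the triangular case discussed in \cite{Dellacherie2016_all_Mach, Arnold_1989, jung2022steady}. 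Assembling these pieces closes the argument and establishes $\ker \ddivx = \Range \nablaperp \oplus \bKK$.
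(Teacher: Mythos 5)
Your argument tracks the paper's proof almost line for line --- same inclusion and orthogonality computations, same appeal to \autoref{prop:decompositionQuad}, same block-by-block decoupling of the jump constraints via the mutual orthogonality of the traces, and the same counts for the $\Phi_k$ and Legendre edge-mode blocks --- so the only point needing scrutiny is the constant-like block, and that is where you have a genuine gap. The paper does not treat this block ``in analogy with the triangular case'': it devotes all of \autoref{app:Quad} (\autoref{quad_lowOrder}) to proving that the $2N$ constant-jump constraints acting on the $3N$ per-cell parameters $(\alpha_{i,j},\beta_{i,j},\gamma_{i,j})$ leave an exactly $(N+1)$-dimensional solution space. That computation is not a formality: the $\alpha$-system along each row is singular and solvable only if $\sum_i \gamma_{i,j}=0$ for every $j$, the $\beta$-system along each column only if $\sum_j \gamma_{i,j}=0$ for every $i$, and one must then check that these $N_x+N_y$ conditions on the $\gamma$'s overlap in exactly one relation. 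Your substitute --- one redundancy from the global identity $\sum_f \int_f \Jump{\bu \cdot \bn_f} = -\sum_c \int_c \divx \bu = 0$ plus the claim that connectedness ``rules out any further redundancy, in direct analogy with the triangular case'' --- asserts precisely the conclusion of \autoref{quad_lowOrder} without proving it, and the analogy cannot carry it: the triangular low-order block is two-dimensional per cell and purely constant, whereas here the third, non-constant mode $\gamma_{i,j}$ couples the row and column systems in a way that has no triangular counterpart. To close the gap you must either reproduce the appendix computation or actually prove that the space of linear dependencies among the $2N$ face equations, restricted to the cellwise divergence-free constant-like space, is one-dimensional.

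Everything else is sound, and one of your additions is worth keeping even though the paper omits it: the explicit inclusion of $\nablaperp \QQ_1 \oplus \bKK$ inside the constrained constant-like block, via the expansion of $\nablaperp\left(a+bx+cy+dxy\right)$ in the basis $(1,0)$, $(0,1)$, $(1-2x,2y-1)$, gives the lower bound $N+1$ cheaply and, once the upper bound is in place, identifies the block exactly as $\nablaperp\QQ_1\oplus\bKK$ --- the natural quadrangular analogue of $\nablaperp\PP_1\oplus\bKK$ on triangles, which the paper never states. Two smaller points: your final arithmetic silently uses $\dim\Phi_k=k^2$ per cell; this follows from \autoref{prop:CkQuad} (the kernel of $\bC^{\partial}_k$ has dimension $1+k^2$ and $\nablaperp$ kills only the constants), but it should be said, all the more so as the paper's own proof writes $k^2+1$ in one bullet before using $Nk^2$ in the sum. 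And the lower bound $\dim\ker\ddivx\geq N(k+1)^2+1$ you extract from the inclusion is not by itself useful here; what the block decomposition must deliver is the matching upper bound, so the direction of the constant-like count that you left unproven is exactly the direction that matters.
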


\begin{proof}
  The proof of $\Range \left( \nablaperp \right)$ and 
  $\bKK$ being orthogonal and the two being subvectorial spaces of
  $ \ker \left( \ddivx \right)$ is exactly the same as in the proof of
  \autoref{prop:kerRankTri}.

  Concerning the dimension of this space, we proceed as in
  the proof of \autoref{prop:kerRankTri}, but need to rewrite it because the
  dimensions are different.
  Suppose now that an element $\bu \in \bdQQkCurl$ is such that
  its divergence $\ddivx$ is $0$, namely
  $$
  \left\{
  \begin{array}{r}
    \forall c \in \cC \qquad \divx \bu = 0 \\
    \forall f \in \cF \qquad \Jump{\bu \cdot \bn_f } = 0. \\
  \end{array}
  \right.
  $$
  As $\divx \bu = 0$ on all the cells, $\bu$ can be decomposed as
  in \autoref{prop:decompositionQuad}; this decomposition involves three types
  of components:
  \begin{itemize}
  \item The ones of $\Phi_k$, which have a trace equal to $0$. This
    set is of dimension $k^2 +1$ on each cell. 
  \item One component in the dimension 3 vectorial space.
    $$\widehat{\bKK} = \mathrm{Vec} \left(
    \left(
    \begin{array}{c}
      1 \\
      0
    \end{array}
    \right)
    ,
    \left(
    \begin{array}{c}
      0 \\
      1
    \end{array}
    \right)
    ,
    \left(
    \begin{array}{c}
      \dfrac{2}{L_{i,j} ^x} \, \left( m_{i,j} ^x - x \right) \\
      \dfrac{2}{L_{i,j} ^y} \, \left( y - m_{i,j} ^y \right)
    \end{array}
    \right)
    \right).$$
  \item The components ${\bar \bv}_i ^f$ for $1 \leq i \leq k$ and for all
    faces. This set is of dimension $4k$ on each cell. 
  \end{itemize}
  Let us see now what is the effect of the constraint
  $\Jump{\bu \cdot \bn_f} = 0$ on theses different components.
  We first remark that the traces of the
  different components are orthogonal two at a time, which means that we can
  consider the effect of $\Jump{\bu \cdot \bn_f} = 0$ component by component:
  \begin{itemize}
  \item The ones of $\Phi_k$ are not affected by the zero jump constraint,
    because their trace is already 
    equal to $0$. This induces $N \,  k^2 $ components
    in $\bdQQkCurl$. 
  \item The components of $\widehat{\bKK}$, with the constraint
    $\Jump{\bu \cdot \bn_f} = 0$ is a set that is identified in
    \autoref{quad_lowOrder}, proven in
    \autoref{app:Quad}: it is of dimension
    $N+1$.
  \item Concerning the components ${\bar \bv}_i ^f$ for $ 1 \leq i \leq k$,
    the constraint $\Jump{\bu \cdot \bn_f} = 0$ is inducing
    $k \#\cF$ free constraints on a space of dimension $4kN$. This induces a
    space of dimension
    $$4 k N - k \#\cF = 4 k N - k 2 N = 2 k N.$$
  \end{itemize}
  Adding the dimension of theses different sets gives the dimension of
  $\ker \ddivx$:
  $$
  \begin{array}{r@{\, = \, }l}
  \dim \left( \ker \ddivx \right)
  & N k^2  + N+1 + 2 k N 
  \\
  & N \left( k^2 + 2 k + 1 \right) + 1 \\
  & N \left( k+1 \right)^2 + 1, \\
  \end{array}
  $$
  which is exactly equal to $\rank \left( \nablaperp \right) + \dim \bKK$. We have then
  proven that $ \nablaperp \oplus  \bKK \subset \ker \ddivx$ and that the
  dimensions are equal, so that
  $ \Range \left( \nablaperp \right) \oplus  \bKK = \ker \ddivx$.

\end{proof}

\subsection{Study of $\ddivx$}

The kernel of $\ddivx$ was already characterised in \autoref{prop:kerRankQuad}.
We now characterise its range. 

\begin{prop}[Range of $\ddivx$]
  \label{prop:RankDivxQuad}
  We have
  $$
  \dQQkHat ( \cC)
  \times
  \dQQ_{k} \left( \cF \right)
  = \Range \left( \ddivx \right) \oplus \KK,
  $$
  where the sum is orthogonal for the scalar product defined
  in \eqref{eq:scalarProduct}. 
\end{prop}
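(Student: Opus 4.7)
The plan is to mirror the triangular proof of \autoref{prop:RankDivxTri}: first establish that $\KK$ lies in the orthogonal complement of $\Range(\ddivx)$ for the scalar product \eqref{eq:scalarProduct}, then use a rank/nullity dimension count to show that $\Range(\ddivx)$ has codimension exactly $1$ in $\dQQkHat(\cC) \times \dQQ_k(\cF)$, which forces the direct sum decomposition.

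For the orthogonality step, I would take $k \in \KK$, viewed as a single constant value shared by its cell component and face component, and compute $\scalar{\ddivx \bu}{k}$ for $\bu \in \bdQQkCurl(\cC)$. Using Green's formula cellwise, $\Int{c}{} k\,\divx \bu = \Int{\partial c}{} k\,\bu \cdot \bn_{out}$, since $k$ is constant. Reorganising the boundary sum by faces, each interior face $f$ receives a contribution $\Int{f}{} k\,(\bu_L - \bu_R)\cdot \bn_f = -\Int{f}{} k\,\Jump{\bu\cdot\bn_f}$, which exactly cancels the face term $\Int{f}{} k\,\Jump{\bu\cdot\bn_f}$ in the scalar product. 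This part is identical to the corresponding calculation in the triangular case and should go through verbatim, with the torus periodicity absorbing any would-be boundary terms.

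For the dimension count, I would combine \autoref{prop:nablaPerpQuad} and \autoref{prop:kerRankQuad} to get $\dim \ker(\ddivx) = \rank(\nablaperp) + \dim \bKK = (N(k+1)^2 - 1) + 2 = N(k+1)^2 + 1$. Applying the rank-nullity theorem to $\ddivx : \bdQQkCurl(\cC) \to \dQQkHat(\cC) \times \dQQ_k(\cF)$ with $\dim \bdQQkCurl(\cC) = N(2(k+1)^2 + 2k + 1)$ from \autoref{prop:dimFEQuad} gives
$$\rank(\ddivx) = N(2(k+1)^2 + 2k + 1) - N(k+1)^2 - 1 = N((k+1)^2 + 2k + 1) - 1 = N(k^2 + 4k + 2) - 1.$$
By \autoref{prop:dimFEQuad}, $\dim(\dQQkHat(\cC) \times \dQQ_k(\cF)) = N(k^2 + 4k + 2)$, so $\Range(\ddivx)$ has codimension $1$.

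Since $\KK$ is one-dimensional and orthogonal to $\Range(\ddivx)$, and their dimensions add up to the dimension of the ambient space, the direct orthogonal sum decomposition $\dQQkHat(\cC) \times \dQQ_k(\cF) = \Range(\ddivx) \oplus \KK$ follows. I do not expect any real obstacle here: the orthogonality calculation is purely formal once the sign conventions for $\bn_f$ and $\Jump{\cdot}$ are consistent, and the dimension bookkeeping has already been done in the preceding propositions. The only mild subtlety is being careful that $\KK$ really means the one-dimensional space of constants embedded diagonally in $\dQQkHat(\cC) \times \dQQ_k(\cF)$ (as opposed to constants only on cells, or only on faces), since the orthogonality argument crucially uses the same constant on both sides to cancel the jump contributions against the cell boundary contributions.
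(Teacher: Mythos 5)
Your proposal is correct and follows essentially the same route as the paper, which simply declares the proof to be identical to that of \autoref{prop:RankDivxTri}: the cellwise Green's formula with reorganisation by faces to cancel the jump terms against the boundary terms (showing $\KK \perp \Range\left(\ddivx\right)$), combined with the rank--nullity count $\rank\left(\ddivx\right) = \dim \bdQQkCurl(\cC) - \dim \ker\left(\ddivx\right) = N(k^2+4k+2)-1$, which is codimension one. Your closing remark about $\KK$ being the constants embedded diagonally in both the cell and face components is exactly the convention the paper uses in the triangular proof, so no adjustment is needed.
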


The proof is exactly the same as the proof of \autoref{prop:RankDivxTri}.

\subsection{Summary on the de-Rham complex}

Gathering all the results of this section, the following proposition
was proven
\begin{prop}
\label{prop:SummaryQuadCurlDiv}
The discrete diagram
$$
\PP_{k+1}
\quad
\diag{\nablaperp}
\quad
\bdQQkCurl ( \cC )
\quad
\diag{\ddivx}
\quad
\dQQ_k ( \cF ) \times \dQQkHat ( \cC),
$$
where 
$\ddivx$ is $\divx$ in the sense of distributions, 
ensures the  \autoref{def:harmonicGap}. Moreover
$$
\left\{
\begin{array}{r@{\, = \, }l}
  \QQ_{k+1} / \KK & \ker \left( \dnablaperp \right)  \\
  \left( \dQQ_k ( \cF ) \times \dQQkHat ( \cC) \right) / \mathbbm{k} &
  \Range \left( \ddivx \right).
\end{array}
\right. 
$$
\end{prop}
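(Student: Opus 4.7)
The plan is to assemble the proposition directly from the three structural results already established in this section, since each node of the cohomology of the complex \eqref{eq:DiagramQuad} is captured by one of them. First, I would use \autoref{prop:nablaPerpQuad} to identify $\ker(\dnablaperp)$ with the one-dimensional space $\KK$ of constant scalars, which matches the zeroth Betti number $b_0 = 1$ of the torus. Via rank-nullity this yields that $\QQ_{k+1}/\KK$ is isomorphic to $\Range(\dnablaperp)$, as claimed in the statement.

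Next I would invoke \autoref{prop:kerRankQuad}, whose decomposition $\ker(\ddivx) = \Range(\dnablaperp) \oplus \bKK$ shows that the middle cohomology $\ker(\ddivx)/\Range(\dnablaperp)$ is precisely the two-dimensional space $\bKK$ of constant vectors, matching the first Betti number $b_1 = 2$. For the top node, I would then apply \autoref{prop:RankDivxQuad}, which provides the orthogonal decomposition $\dQQkHat(\cC) \times \dQQ_k(\cF) = \Range(\ddivx) \oplus \KK$ and thereby identifies the cokernel of $\ddivx$ with the one-dimensional $\mathbbm{k}$, matching $b_2 = 1$.

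Combining the three dimension counts immediately establishes \autoref{def:harmonicGap} for the complex \eqref{eq:DiagramQuad} and furnishes the two identifications displayed in the statement. The real work has been distributed among the preparatory results; the main technical obstacle was \autoref{prop:decompositionQuad}, which had to accommodate the enrichment of $\bdQQkCurl$ through a three-dimensional space of piecewise divergence-free, boundary-trace-constant elements, together with the dimension count in \autoref{prop:kerRankQuad} that relies on the low-order treatment of \autoref{quad_lowOrder}. Once those ingredients are in place, the present summary is essentially an exercise in gathering the preceding propositions.
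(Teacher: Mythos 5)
Your proposal is correct and matches the paper exactly: the paper offers no separate argument for this proposition, introducing it with ``Gathering all the results of this section, the following proposition was proven,'' i.e.\ it is assembled from \autoref{prop:nablaPerpQuad}, \autoref{prop:kerRankQuad} and \autoref{prop:RankDivxQuad}, with the technical weight carried by \autoref{prop:decompositionQuad} and \autoref{quad_lowOrder}, just as you describe. Your reading of the first displayed identity as identifying $\QQ_{k+1}/\KK$ with $\Range(\dnablaperp)$ (since $\ker(\dnablaperp)=\KK$) is the intended one.
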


By changing the representation of the linear forms, which is equivalent
to rotating of $\pi/2$ the vector spaces, the following
proposition is also obtained:
\begin{prop}
\label{prop:SummaryQuadrangleGradCurl}
The discrete diagram
$$
\QQ_{k+1}
\quad
\diag{\gradx}
\quad
\bdQQkDiv ( \cC )
\quad
\diag{\dcurlS}
\quad
\dQQ_k ( \cF ) \times \dQQkHat ( \cC),
$$
where 
$\dcurlS$ is $\curlS$ in the sense of distributions, 
ensures the \autoref{def:harmonicGap}. Moreover
$$
\left\{
\begin{array}{r@{\, = \, }l}
  \QQ_{k+1} / \KK & \ker \left( \gradx \right)  \\
  \left( \dQQ_k ( \cF ) \times \dQQkHat ( \cC) \right) / \mathbbm{k} &
  \Range \left( \dcurlS \right).
\end{array}
\right. 
$$
\end{prop}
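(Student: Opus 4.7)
The plan is to deduce \autoref{prop:SummaryQuadrangleGradCurl} from the already established \autoref{prop:SummaryQuadCurlDiv} by exploiting the pointwise $\pi/2$ rotation of vector proxies that relates the curl/div and grad/curl representations of the de-Rham complex. No new calculation is needed beyond checking that this rotation is a bijection between the middle spaces and that it intertwines the differential operators in the correct way.

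First, I would introduce the rotation operator $R : \bu = (u_1, u_2)^T \mapsto (u_2, -u_1)^T$, acting pointwise and hence cellwise as an $L^2$-isometry. I would then check, directly from the definitions \eqref{eq:HCurlQuad} and \eqref{eq:HDivQuad}, that $R$ restricts to a linear bijection $\bdQQkCurl (\cC) \to \bdQQkDiv (\cC)$: the first and second scalar components swap roles (exchanging the contributions of $\dd \QQ_{k+1,k-1}$ and $\dd \QQ_{k-1,k+1}$), and the extra basis vector $(x^k y^{k+1}, x^{k+1} y^k)^T$ is sent to $(x^{k+1} y^k, -x^k y^{k+1})^T$, which spans the same line as the bubble vector of $\bdQQkDiv (\cC)$. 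The scalar end spaces $\QQ_{k+1}$ and $\dQQ_k (\cF) \times \dQQkHat (\cC)$ are untouched by $R$.

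Second, I would verify the intertwining identities. For $\psi \in \QQ_{k+1}$, a direct computation gives $R (\nablaperp \psi) = \gradx \psi$. For $\bu \in \bdQQkCurl$, one has cellwise $\divx (R \bu) = \partial_x u_2 - \partial_y u_1 = \curlS \bu$, and the face contribution of $\ddivx (R \bu)$ on $f$, namely $\Jump{R \bu \cdot \bn_f}$, equals (up to the sign fixed by the orientation convention on $\bn_f$) the tangential jump $\Jump{\bu \cdot \bt_f}$, which is precisely the face part of $\dcurlS \bu$ as a distribution. Hence $R$ makes the two complexes strictly isomorphic:
$$
\begin{array}{ccccc}
\QQ_{k+1} & \xrightarrow{\nablaperp} & \bdQQkCurl ( \cC ) & \xrightarrow{\ddivx} & \dQQ_k (\cF) \times \dQQkHat ( \cC) \\
\mathrm{Id} \downarrow & & R \downarrow & & \mathrm{Id} \downarrow \\
\QQ_{k+1} & \xrightarrow{\gradx} & \bdQQkDiv ( \cC ) & \xrightarrow{\dcurlS} & \dQQ_k (\cF) \times \dQQkHat ( \cC). \\
\end{array}
$$

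Finally, because $R$ is a bijection and the outer maps are identities, the dimensions of kernels, ranges and quotient spaces are preserved. In particular, $\ker (\gradx) = \ker (\nablaperp) = \KK$ and $\Range (\dcurlS) = \Range (\ddivx)$, so that $\QQ_{k+1} / \KK \simeq \ker (\gradx)$ and $(\dQQ_k (\cF) \times \dQQkHat (\cC)) / \mathbbm{k} \simeq \Range (\dcurlS)$ by transport from \autoref{prop:SummaryQuadCurlDiv}, which also yields the harmonic gap property of \autoref{def:harmonicGap} for the new diagram. The only real obstacle is a bookkeeping one: to make the equality $\ddivx \circ R = \dcurlS$ hold on the nose rather than with a spurious sign, one must match carefully the orientation convention used to define $\bn_f$ (and hence the jump $\Jump{\,\cdot\,}$) with the way $R$ sends normals to tangents on each face; this is straightforward on a Cartesian mesh but should be recorded explicitly.
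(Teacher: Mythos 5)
Your proposal is correct and is exactly the paper's argument: the paper obtains \autoref{prop:SummaryQuadrangleGradCurl} from \autoref{prop:SummaryQuadCurlDiv} with the single remark that the grad/curl diagram is a $\pi/2$ rotation of the curl/div one, which is precisely the isomorphism of complexes you spell out (your verification that $R$ maps $\bdQQkCurl(\cC)$ onto $\bdQQkDiv(\cC)$ and intertwines $\nablaperp$ with $\gradx$ and $\ddivx$ with $\dcurlS$ is a more careful write-up of what the paper leaves implicit).
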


We now compare the number of degrees of freedom of the
basis $\bdQQkDiv$ and $\bdQQkCurl$ with the quadrangular
basis discussed in \autoref{sec:dRT} on each cell:
$$
\dim \bdQQkDiv - \dim \dRT_{k+1} ^{\square}
=
\dim \bdQQkCurl - \dim \dN_{k+1} ^{\square}
= 2(k+2)(k+1) - \left( 2 (k+1)^2 + 2k+1 \right)
= 1.
$$
Therefore the difference of number of degrees of freedom is negligible.
Considering that the vector basis $\bdQQkDiv$ and
$\bdQQkCurl$ do not have a Lagrange basis (this is why no representation
of these basis was proposed), it seems that using these new basis has few
benefits with respect to $\dRT_{k+1}$ and $\dN_{k+1}$.

\section{Conclusion}
\label{sec:Conclusion}

In this article, two-dimensional discrete de-Rham structures in which
the vector space is a discontinuous approximation space were discussed.
We first recalled that by relaxing the normal or tangential continuity
properties of the classical conformal space, a set of discontinuous
approximation space can be designed as in \cite{licht2017complexes}.
These discontinuous spaces, $\dN$ and $\dRT$, are discontinuous
versions of the N\'ed\'elec $\bN$ and Raviart-Thomas $\RT$
approximation spaces. 

Then the de-Rham structure of the natural discontinuous vectorial
space $\bdPP_k$ on triangles, used for example
for the discontinuous Galerkin method was investigated. We proved that for
straight triangular meshes, a discrete de-Rham complex can be built for which
the \autoref{def:harmonicGap} is ensured for any order of approximation. 

Based on the finite element spaces and discrete
$\nablaperp$/$\left( \ddivx \right)$
or $\gradx$/$\left( \dcurlS\right)$
that were used for the triangular case, \autoref{def:harmonicGap}
was proven for discontinuous spaces of
vectors. However, the space of vectors is not the classical
$\bdQQ_k$ approximation space which is usually used in the
discontinuous Galerkin method, but rather an enriched version, 
$\bdQQkCurl$ and $\bdQQkDiv$, depending on the diagram considered. Note that
no diagram that would be based on the so-called serendipity elements was
addressed, but this could be a way to derive velocity approximation
spaces that can be put in a de-Rham diagram with fewer degrees of freedom 
(still at the price of getting a nonoptimal order of
accuracy on general quads). 

It is important to note that only \autoref{def:harmonicGap} was
addressed in this article. \answerI{The bounded cochain projection property}
was not addressed, which is still far from
the framework that was developed in
\cite{arnold2018finite} 
for conforming finite element approximation. 
Still, \autoref{def:harmonicGap} is an algebraic property that we
believe to be useful in the context of the derivation of curl
preserving numerical schemes for hyperbolic systems. Some
curl-preserving schemes that were developed in the finite volume
scheme context 
\cite{Dellacherie2010_cell_geometry,Dellacherie2016_all_Mach,jung2022steady}
rely on the existence of the following discrete decomposition
\cite{Arnold_1989}
($\mathbb{CR}$ denotes the Crouzeix-Raviart finite element space
\cite{crouzeix1973conforming}), which reads on periodic triangular meshes as
\begin{equation}
  \label{eq:HHD_LO}
  \bdPP_0 / \R^2 = \nablaperp \PP_1 \oplus \nabla \mathbb{CR},
\end{equation}
and on the preservation of the solenoidal component, this property being
strongly linked with the correct low Mach number behaviour on triangular
and tetrahedral meshes \cite{guillard2009behavior,jung2024behavior}.
The \autoref{def:harmonicGap} discussed in this article 
directly induces the following Hodge-Helmholtz decomposition
\cite{arnold2018finite}:
\begin{equation}
  \label{eq:HHD}
  \bB / \R^2=
  \Range \left( \nablaperp \right)
  \oplus
  \Range \left( \ddivx ^{\star} \right), 
\end{equation}
where the $\star$ denotes the adjoint operator. For example,
the diagram of \autoref{prop:SummaryTriangleCurlDiv} induces the following
discrete Hodge-Helmholtz decomposition
\begin{equation}
  \label{eq:HHD_HO}
  \bdPP_k ( \cC ) / \R^2=
  \Range \left( \nablaperp \PP_{k+1} \right)
  \oplus
  \Range \left( \ddivx^{\star} \left(
  \dPP_k ( \cF ) \times \dPP_{k-1} ( \cC)
  \right) \right), 
\end{equation}
which can be seen as the high order extension of \eqref{eq:HHD_LO}.
\answerI{
  In a submitted article
  \cite{perrier2024developmentdiscontinuousgalerkinmethods},
  we explain how to preserve a curl or a divergence
  constraint of a hyperbolic system of conservation law with the approximation
  spaces that were discussed in this article.
}

\bibliographystyle{plain}
\bibliography{biblio}

\appendix

\section{Proof for low order quads}
\label{app:Quad}

\begin{prop}
  \label{quad_lowOrder}        
  Suppose that a periodic Cartesian mesh is composed of $N$ cells,
  and that on each
  cell (of mid point $(m_{i,j} ^x, m_{i,j} ^y)$ and of length $L_{i,j} ^x$ and
  $L_{i,j} ^y$),
  a vector $\bu$ is in
  $$\mathrm{Vec} \left(
  \left(
  \begin{array}{c}
    1 \\
    0
  \end{array}
  \right)
  ,
  \left(
  \begin{array}{c}
    0 \\
    1
  \end{array}
  \right)
  ,
  \left(
  \begin{array}{c}
    \dfrac{2}{L_{i,j} ^x} \, \left( m_{i,j} ^x - x \right) \\
    \dfrac{2}{L_{i,j} ^y} \, \left( y - m_{i,j} ^y \right)
  \end{array}
  \right)
  \right),$$
  then the vectorial space such that
  \begin{equation}
    \label{eq:udotNQuad}
    \forall f \qquad \Jump{\bu \cdot \bn_f} = 0,
  \end{equation}
  is of dimension $N+1$. 
\end{prop}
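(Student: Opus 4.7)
The plan is to coordinatise $\bu$ on each cell by its three coefficients in the given basis, translate \eqref{eq:udotNQuad} into an explicit linear system, and carry out a direct dimension count. Index cells by $(i,j)$ with $1\le i\le N_x$ and $1\le j\le N_y$ so that $N=N_xN_y$, and write
\[
\bu_{|(i,j)} = a_{i,j}\be_1+b_{i,j}\be_2+c_{i,j}\bv_{i,j},
\]
where $\bv_{i,j}$ denotes the third spanning vector. A direct calculation shows that the first component of $\bv_{i,j}$ is constant along vertical edges, equal to $+1$ on the left and $-1$ on the right, and its second component is $-1$ on the bottom and $+1$ on the top; these values do not depend on $L_{i,j}^x$ or $L_{i,j}^y$. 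Consequently $\bu\cdot\bn_f$ is a scalar on every face, and \eqref{eq:udotNQuad} reduces, after periodic identification of indices, to
\[
a_{i+1,j}-a_{i,j}+c_{i+1,j}+c_{i,j}=0, \qquad b_{i,j+1}-b_{i,j}-c_{i,j+1}-c_{i,j}=0,
\]
i.e.\ $2N$ linear equations in the $3N$ unknowns $(a_{i,j},b_{i,j},c_{i,j})$.

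Next I treat $c=(c_{i,j})$ as free data and solve for $a$ and $b$. For fixed $j$, the $a$-equation is a cyclic first-order recurrence in $i$ whose obstruction to solvability is $\Sum{i}{}c_{i,j}$; hence it is consistent iff this row-sum vanishes, and in that case the $a_{i,j}$'s are determined up to one additive constant per row, giving $N_y$ gauge freedoms. Symmetrically, the $b$-equation is consistent iff $\Sum{j}{}c_{i,j}=0$ for every $i$, providing $N_x$ more gauge freedoms. The admissible $c$'s therefore form the intersection in $\R^N$ of $N_y$ row-sum and $N_x$ column-sum conditions. These $N_x+N_y$ linear forms admit the unique relation $\Sum{j}{}\bigl(\Sum{i}{}c_{i,j}\bigr)=\Sum{i}{}\bigl(\Sum{j}{}c_{i,j}\bigr)$, so they have rank $N_x+N_y-1$ and cut out a subspace of dimension $N-N_x-N_y+1$. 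Adding the gauge freedoms gives
\[
(N-N_x-N_y+1)+N_x+N_y=N+1,
\]
as announced.

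The one delicate point is to verify that the row- and column-sum forms on $c$ indeed have a unique dependency; a short check shows that if $\Sum{j}{}\alpha_j\bigl(\Sum{i}{}c_{i,j}\bigr)+\Sum{i}{}\beta_i\bigl(\Sum{j}{}c_{i,j}\bigr)\equiv 0$ on $\R^N$, then $\alpha_j+\beta_i=0$ for every $(i,j)$, which forces the $\alpha_j$'s to be constant and the $\beta_i$'s to be the same constant with opposite sign, giving a one-parameter family of relations. As a sanity check, summing all $2N$ jump equations telescopes to $0$ by periodicity and yields precisely this single relation among the rows of the constraint matrix, so rank--nullity confirms $\dim\ker=3N-(2N-1)=N+1$.
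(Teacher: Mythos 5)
Your proof is correct and follows essentially the same route as the paper: coordinatise by the three coefficients per cell, reduce the jump conditions to cyclic recurrences whose solvability forces the row- and column-sum conditions on the $\gamma$ (your $c$) coefficients, and count $N_x+N_y$ gauge freedoms plus the admissible $c$'s. The only difference is that you obtain $\dim = N-N_x-N_y+1$ for the admissible $c$'s by showing the $N_x+N_y$ sum-forms have exactly one linear dependency, whereas the paper parametrises that space explicitly by the $\gamma_{i,j}$ with $i,j\ge 1$ and checks consistency at $\gamma_{0,0}$; both are valid and give the same count.
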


\begin{proof}
  We denote by $N_x$ (resp. $N_y$) the number of cells in the $x$
  (resp. $y$) direction. For each cell $i,j$, we denote by $\alpha_{i,j}$,
  $\beta_{i,j}$ and  $\gamma_{i,j}$ the coefficients such that
  $$\bu_{|c_{i,j}}
  =
  \alpha_{i,j}
  \left(
  \begin{array}{c}
    1 \\
    0
  \end{array}
  \right)
  +
  \beta_{i,j}
  \left(
  \begin{array}{c}
    0 \\
    1
  \end{array}
  \right)
  + \gamma_{i,j}
  \left(
  \begin{array}{c}
    \dfrac{2}{L_{i,j} ^x} \, \left( m_{i,j} ^x - x \right) \\
    \dfrac{2}{L_{i,j} ^y} \, \left( y - m_{i,j} ^y \right)
  \end{array}
  \right). 
  $$
  Then the constraint $\Jump{\bu \cdot \bn_f} = 0$ is equivalent to the
  following equations
  \begin{equation}
    \label{eq:j}
    \forall j \qquad 
    \left\{
    \begin{array}{l}
      \forall\,  0 \leq i \leq N_x-2 \qquad 
      \alpha_{i+1,j} -  \alpha_{i,j} =
      - \left( \gamma_{i+1,j} + \gamma_{i,j}\right)\\
      \alpha_{0,j} -  \alpha_{N_x-1,j} =
      - \left( \gamma_{0,j} + \gamma_{N_x-1,j}\right)\\
    \end{array}
    \right.
  \end{equation}
  \begin{equation}
    \label{eq:i}
    \forall i \qquad 
    \left\{
    \begin{array}{l}
      \forall\,  0 \leq j \leq N_y-2 \qquad 
      \beta_{i,j+1} -  \beta_{i,j} =
      - \left( \gamma_{i,j+1} + \gamma_{i,j}\right)\\
      \beta_{i,0} -  \beta_{i,N_y-1} =
      - \left( \gamma_{i,0} + \gamma_{i,N_y-1}\right)\\
    \end{array}
    \right.
  \end{equation}
  We first consider equation \eqref{eq:j}. If this equation is seen for each
  $j$ as a
  system in $\alpha_{\star,j}$, then this system is singular, with a kernel
  directed by $(1, 1,\dots,1)$, and the right hand side is compatible
  with this kernel if and only if
  \begin{equation}
    \label{eq:gammai}
    \forall j \qquad \Sum{i}{}\gamma_{i,j} = 0.
  \end{equation}
  If this last constraint holds, then if one $\alpha_{\star,j}$ is known
  (for example $\alpha_{0,j}$), then the $\alpha_{i,j}$ are known for all
  $i$. This makes $N_y$ parameters for recovering the $\alpha_{i,j}$ once the
  $\gamma_{i,j}$ are known. 

  In the same manner, by considering \eqref{eq:i}, we can prove that if and
  only if
  \begin{equation}
    \label{eq:gammaj}
    \forall i \qquad \Sum{j}{}\gamma_{i,j} = 0,
  \end{equation}
  the system in $\beta$ has a $N_x$ parameter family solution determined by
  the $\beta_{i,0}$.

  It remains to study the system on the $\gamma$ coefficients defined by
  \eqref{eq:gammai},\eqref{eq:gammaj}. For this system, we consider the
  coefficients $\gamma_{i,j}$ for $i>0$ and for $j>0$ as parameters
  (this makes $N-(N_x+N_y-1) = N -N_x - N_y +1$ parameters). Then
  the $\gamma_{0,j}$ for $j \geq 1$ are determined by \eqref{eq:gammai}, and
  the $\gamma_{i,0}$ for $i \geq 1$ are determined by \eqref{eq:gammaj}:
  \begin{equation}
    \label{eq:gammaEncore}
    \begin{array}{r@{\, = \, }l}
      \forall j \geq 1 \qquad \gamma_{0,j} &  - \Sum{i \geq 1}{} \gamma_{i,j}\\
      \forall i \geq 1 \qquad \gamma_{i,0} &  - \Sum{j \geq 1}{} \gamma_{i,j}\\
    \end{array}
  \end{equation}
  It remains to determine $\gamma_{0,0}$, which is a priori given by
  two equations
  $$
  \begin{array}{r@{\, = \, }l}
    \gamma_{0,0} & - \Sum{i \geq 1}{} \gamma_{i,0}\\
    \gamma_{0,0} & - \Sum{j \geq 1}{} \gamma_{0,j}\\
  \end{array}
  $$
  However, if \eqref{eq:gammaEncore} is considered, the two equations give the
  same value, namely
  $$  \gamma_{0,0} = - \Sum{i \geq 1}{} \Sum{j \geq 1}{} \gamma_{i,j}.$$
  We therefore have been able to express all the $\gamma_{0,j}$ and all the
  $\gamma_{i,0}$ provided all the $\gamma_{i,j}$ for $i \geq 1$ and
  $j \geq 1$ are known.
 
  Finally, a basis of the vectorial space determined
  by \eqref{eq:udotNQuad} was found. Its parameters are
  \begin{itemize}
  \item the $\gamma_{i,j}$ for $i \geq 1$ and $j \geq 1$. These are
    $N -N_x - N_y +1$ parameters.
  \item the $\alpha_{0,j}$ for $j \geq 0$. These are
    $N_y$ parameters.
  \item the $\beta_{i,0}$ for $i \geq 0$. These are
    $N_x$ parameters.
  \end{itemize}
  This makes a total of $N+1$ parameters. 

\end{proof}

\end{document}